\def\be{\begin{equation}}
\def\ee{\end{equation}}
\def\bea{\begin{eqnarray}}
\def\eea{\end{eqnarray}}
\def\beann{\begin{eqnarray*}}
\def\eeann{\end{eqnarray*}}
\newenvironment{smallarray}[1]
 {\null\,\vcenter\bgroup\scriptsize
  \arraycolsep=.42em
  \hbox\bgroup$\array{@{}#1@{}}}
 {\endarray$\egroup\egroup\,\null}
\newtheorem{lemma}{Lemma}
\newtheorem{theorem}{Theorem}
\newtheorem{remark}{Remark}
\newtheorem{corollary}{Corollary}
\newtheorem{definition}{Definition}
\newtheorem{problem}{Problem}
\newtheorem{conjecture}{Conjecture}
\def\be{\begin{equation}}
\def\ee{\end{equation}}
\def\bea{\begin{eqnarray}}
\def\eea{\end{eqnarray}}
\def\beann{\begin{eqnarray*}}
\def\eeann{\end{eqnarray*}}
\def\QED{\mbox{\rule[0pt]{1.5ex}{1.5ex}}}
\def\endproof{\hspace*{\fill}~\QED\par\endtrivlist\unskip}
\def\R{\mathbb{R}}
\def\C{\mathbb{C}}
\def\Z{\mathbb{Z}}
\def\rk{\mathrm{rk}}
\def\mindeg{\min\,\deg\,}
\def\maxdeg{\max\,\deg\,}
\newcommand{\diag}{\operatorname{diag}}
\definecolor{Royalblue}{cmyk}{1,0.30,0.2,0.2}
\definecolor{Darkgreen}{rgb}{0,0.6,0}
\begin{document}
\title{{On the Factorization of Rational Discrete-Time Spectral Densities}}

\author{Giacomo Baggio, Augusto~Ferrante
\thanks{Giacomo Baggio is with the Dipartimento di Ingegneria dell'Informazione,
         Universit\`a di Padova,
         via Gradenigo, 6/B -- I-35131 Padova, Italy.
        E-mail: {\tt  giacomo.baggio@studenti.unipd.it.} }
\thanks{Augusto Ferrante is with the Dipartimento di Ingegneria dell'Informazione,
         Universit\`a di Padova,
         via Gradenigo, 6/B -- I-35131 Padova, Italy.
        E-mail: {\tt augusto@dei.unipd.it.} }
\thanks{Partially supported by University of Padova under the grant ``Progetto di Ateneo".}
}

% The paper headers
\markboth{DRAFT}{Shell \MakeLowercase{\textit{et al.}}: Bare Demo of IEEEtran.cls for Journals}

\maketitle

\vspace{-1cm}

\begin{abstract}
In this paper, we consider an arbitrary matrix-valued, rational spectral density $\Phi(z)$. 
We show with a constructive proof that $\Phi(z)$ admits a factorization of the form 
$\Phi(z)=W^\top (z^{-1})W(z)$, where $W(z)$ is {\em stochastically minimal}. Moreover, $W(z)$ and its right inverse are analytic in regions that may be selected with the only constraint that they satisfy some symplectic-type conditions.
By suitably selecting the analyticity regions, this extremely general result particularizes into a corollary that may be viewed as the discrete-time counterpart of the matrix factorization method devised by Youla in his celebrated work \cite{Youla-1961}.
\end{abstract}

\IEEEpeerreviewmaketitle

\section{Introduction}

The spectral factorization problem is a classical and extensively investigated problem in Linear-Quadratic optimal control theory \cite{Willems-1971,Stoorvogel-S-98,Aksikas-et-al-07,Ferrante-Ntog-Automatica-13,Swigart-Lall-14}, estimation theory and stochastic realization
\cite{Lindquist-P-85-siam,Lindquist-P-91-jmsec,Picci-P-94,Ruckebusch-78-2,Ruckebusch-80,Ferrante-94-ieee,Ferrante-94-jmsec,Ferrante-M-P-93}, operator theory and network theory  \cite{Brune-31,Anderson-Vongpanitlerd-1973,BarasDW,FaurreCG,Fuhrmann-95,Helton,Zemanian}, interpolation theory |  from the classical paper
\cite{NudeScw} to the recent works of Byrnes, Georgiou, Lindquist and coworkers, see \cite{Byrnes-et-al} and references therein |  and passivity  from the classical positive-real systems theory \cite{Willems-1971,Anderson-Vongpanitlerd-1973,Brogliato-LME-07,Khalil-02} to the more recent negative-imaginary systems theory \cite{Petersen-Lanzon-10,Xiong-PL-10,Ferrante-N-13}, to mention just the main fields and a few references.
Indeed, spectral factorization  is the common denominator of a circle of ideas including LQ optimization methods, passivity theory, positivity, second-order stationary stochastic processes and Riccati equations. It seems therefore fair to say that spectral factorization is one of the cornerstones of modern systems and control theory.

Since the pioneering works of Kolmogorov and Wiener in the forties, a variety of methods have been proposed for the analysis and solution of this problem under different assumptions and in different settings, see e.g., \cite{Anderson-et-al-1974, Jezek-Kucera-1985, Rissanen-1973, Tunnicliffe-Wilson-1972,Callier-1985,Youla-Kazanjian-1978,Moir1,Moir2,Moir3}, to cite but a few.
We also refer to the relatively recent survey \cite{Sayed-K} that contains many other references and different points of view on this problem. A particularly relevant result on this topic is the well-known procedure devised by Youla in \cite{Youla-1961} which can be used to solve the rational multivariate spectral factorization problem in continuous-time. 
Remarkably, this method does not require any additional system-theoretic assumption: the rational spectrum $\Phi(s)$ may feature poles and zeroes on the imaginary axis, its rank may be deficient and it can be a non-proper rational function.
Moreover, this method permits a generalization that allows for the selection of the region of analyticity of the spectral factor. 
This turns out to be a crucial feature in the solution of related control problems: For example, in \cite{Ferrante-Pandolfi-2002} a spectral factor having poles and zeroes in a certain  region of the complex plane has been used to weaken the standard assumptions for the solvability of the classical Positive Real Lemma equations.

Surprisingly, the discrete-time counterpart of this result is so far missing. The reason could be due to the difficulty of deriving a result that parallels the Oono-Yasuura algorithm that constitutes a fundamental step in Youla's work. 
In order to fill this gap, in this paper, we establish a general discrete-time spectral factorization result.   In particular, we show that, given an arbitrary rational matrix function $\Phi(z)$ that is positive semi-definite on the unit circle, and two arbitrary regions featuring a geometry compatible with spectral factorization, $\Phi(z)$ admits a spectral factorization of the form  $\Phi(z)=W^\top (z^{-1})W(z)$ where the poles and zeroes of $W(z)$ lie  on the prescribed regions. The proof is constructive and gives, as a byproduct, stochastic minimality of  the spectral factor, (i.e. minimality of the McMillan degree of $W(z)$) which is a crucial feature in  stochastic realization theory \cite{Lindquist-P-85-siam,Lindquist-P-91-jmsec,Ferrante-96-siam} and is one of the key aspects in the present analysis.
We consider the factorization of the form $\Phi(z)=W^\top (z^{-1})W(z)$ corresponding to optimal control and network synthesis problems.  All the theory is, however, easily adaptable to obtain a dual counterpart for the factorization of the form $\Phi(z)=W(z)W^\top (z^{-1})$. The latter is  the natural factorization associated to  the representation of second-order stationary stochastic processes and hence to filtering and estimation problems. 
In fact, if $\Phi(z)$ is the spectral density of such a process $y(t)$, and $\Phi(z)$ admits a spectral factorization of the form $\Phi(z)=W(z)W^\top (z^{-1})$, then $y(t)$ may be represented as the output of a linear system with transfer function $W(z)$ driven by white noise $e(t)$. When all the poles of $W(z)$ lie inside the unit circle, $W(z)$ is called {\em causal} spectral factor as there is a causal relation between $e(t)$ and  $y(t)$ \cite{Lindquist-P-85-siam}. If, moreover, also the zeroes of $W(z)$ lie inside the unit circle, $W(z)$ is called {\em outer} spectral factor and the relation between  $y(t)$ and $e(t)$ (which is, in this case, the innovation of $y(t)$) is causal and causally invertible.   
The  outer spectral factor is essentially unique and may be recovered in our theory by suitably selecting the regions where the poles and zeroes of $W(z)$ are located; this may be viewed as the discrete-time counterpart of Youla's result.
Of course, with respect to most classical control applications, the outer spectral factor is the required solution. Nevertheless, when a-causal control and estimation problems are involved, see e.g.
\cite{Willems-1971,Colaneri-F-siam,Colaneri-F-SCL,Ferrante-P-98}, and in stochastic realization theory, see \cite{Lindquist-P-85-siam,Picci-P-94,Ferrante-P-P-02-LAA}, spectral factors whose poles and zeroes lie in different regions of the complex plane become important. This provides a strong motivation for our general result where the regions for poles and zeroes of the spectral factor can be suitably selected.

The organization of the paper is as follows.   In section \ref{sec:prob-def+main-res}, we formally introduce the discrete-time spectral factorization problem and, after a few definitions we present our main results. 
In section \ref{sec:problem-statement}, we review some notions from polynomial and rational matrix theory. Section \ref{sec:pre-analysis} is devoted to present a number of preliminary  results. In section \ref{sec:main-theorem}, we derive the proof of our main result and present some byproducts of our theory. Section VI shows a numerical example of the proposed factorization algorithm. Finally, in section \ref{sec:conclusions}, we draw some concluding remarks and we describe a number of possible future research directions.

{\em General notation and conventions:} Given an arbitrary matrix $G$, we write $G^\top$, $\overline{G}$, $G^{-1}$, $G^{-L}$ and $G^{-R}$ for the transpose, complex conjugate, inverse, left inverse and right inverse of $G$, respectively. In what follows, $[G]_{ij}$ stands for the $(i,j)$-th entry of $G$ and $[G]_{i:j,k:h}$ for the sub-matrix obtained by extracting the rows from index $i$ to index $j$ ($i\leq j$)  of $G$ and the columns from index $k$ to index $h$ ($k\leq h$) of $G$. If $\mathbf{v}$ is a vector, then $[\mathbf{v}]_i$ denotes the $i$-th component of $v$. Here, as usual, $I_n$ is the $n\times n$ identity matrix, $\mathbf{0}_{m,n}$ is the $m\times n$ zero matrix and $\diag[a_1,\dots,a_n]$ represents the matrix whose diagonal entries are $a_1,\dots,a_n$.

We denote by $\R[z]^{m\times n}$, $\R[z,z^{-1}]^{m\times n}$ and $\R(z)^{m\times n}$ the set of real $m\times n$ polynomial, Laurent polynomial (L-polynomial, for short) and rational matrices, respectively. Given a rational matrix $G(z)\in\R(z)^{m\times n}$, we let $G^*(z):=G^\top(z^{-1})$, $G^{-*}(z):=[G^{-1}]^*(z)$, $G^{-R*}(z):=[G^{-R}]^*(z)$ and $G^{-L*}(z):=[G^{-L}]^*(z)$. We denote by $\rk(G)$ the normal rank of $G(z)$, i.e., the rank almost everywhere in $z\in\C$ of $G(z)$. The rational matrix $G(z)$ is said to be analytic in a region of the complex plane if all its entries are analytic in this region. Moreover, as in \cite{Youla-1961}, with a slight abuse of notation, when we say that a rational function $f(z)$ is analytic in   a region $\mathbb{T}$ of the complex plane that is not open, we mean that $f(z)$ does not have poles in $\mathbb{T}$. In the case of a rational $f(z)$ this abuse does not cause any problems; in fact, $f(z)$ can have only finitely many poles so that there exists a larger open region $\mathbb{T}_\varepsilon\supset \mathbb{T}$ in which $f(z)$ is indeed analytic. For example, if $f(z)$ is rational and does not have poles on the unit circle, we say that $f(z)$ is analytic on the unit circle in place of $f(z)$ is analytic on an open annulus containing the unit circle. Notice that  such an annulus does indeed exist.

Finally, throughout the paper, we let $\R_0:=\R\setminus\{0\}$, $\C_0:=\C\setminus\{0\}$ and we denote by $\overline{\C}:=\C\cup \{\infty\}$ the extended complex plane.

\section{Problem definition and Main result}\label{sec:prob-def+main-res}

We start by introducing the object of our analysis and define the problem of spectral factorization:
\begin{definition}[Para-Hermitian matrix]
A rational matrix $G(z)\in\mathbb{R}(z)^{n\times n}$ is said to be \emph{para-Hermitian} if $G(z) = G^*(z)$.
\end{definition}

\begin{definition}[Spectrum]
A para-Hermitian rational matrix $\Phi(z)\in\mathbb{R}(z)^{n\times n}$ is said to be a \emph{spectrum} if $\Phi(e^{j\omega})$ is positive semi-definite for all $\omega\in[0,2\pi)$ such that $\Phi(e^{j\omega})$ is defined.
\end{definition}

\begin{definition}[Para-unitary matrix]
A rational matrix $G(z)\in\mathbb{R}(z)^{n\times n}$ is said to be \emph{para-unitary} if 
\[
G^*(z) G(z) =G(z) G^*(z)=I_n.
\]
\end{definition}
\begin{remark}
Notice that a para-Hermitian matrix $G(z)$ is Hermitian in the ordinary sense on the unit circle, while a para-unitary matrix $G(z)$ is unitary in the ordinary sense on the unit circle.
\end{remark}

The spectral factorization problem can be defined as follows:
\begin{problem}\label{prob-sf}
Given a spectrum $\Phi(z)$ find a factorization of the form
\begin{equation}\label{sp-fac-def}
\Phi(z)=W^*(z)W(z).
\end{equation}
\end{problem}

A matrix function $W(z)$ satisfying (\ref{sp-fac-def}) is called {\em spectral factor} of $\Phi(z)$.
Clearly, Problem \ref{prob-sf} admits many solutions. For control applications we are interested in solutions featuring some additional properties: Typical requirements are minimal complexity | as measured by the McMillan degree of $W(z)$ | full  row-rank of $W(z)$, and the fact that the poles and/or the zeroes of $W(z)$ lie in certain regions of the complex plane.
The most general kind of such regions are the following.

\begin{definition}[(Weakly) Unmixed-symplectic]\label{def:unmixed-symplectic}
A set $\mathscr{A}\subset \overline{\C}$ is {\em unmixed-symplectic}\footnote{The reason for the term ``symplectic'' is that   $\mathscr{A}$ and $\mathscr{A}^*$ are symmetric with respect to the unit circle, a type of symmetry induced by symplectic property, see, e.g. \cite{Ferrante-L-98}.  In this spirit, the corresponding property in continuous-time, where $\mathscr{A}^*:= \{\,z\,:\, -z\in\mathscr{A}\,\}$, $\mathscr{A}\cup \mathscr{A}^*$ is the whole complex plane with the exception of the imaginary axis and $\mathscr{A}\cap \mathscr{A}^*=\emptyset$, could be called ``unmixed-Hamiltonian".
} if 
$$
\mathscr{A}\cup \mathscr{A}^*=\overline{\C}\setminus \{\,z\in\C\,:\, |z|=1\,\},\ \ {\rm and}\ \ 
\mathscr{A}\cap \mathscr{A}^*=\emptyset,$$
where $\mathscr{A}^*=\{\,z\,:\, z^{-1}\in\mathscr{A}\,\}$.
The set $\mathscr{A}\subset \overline{\C}$ is {\em weakly unmixed-symplectic} if 
$$
\mathscr{A}\cup \mathscr{A}^*=\overline{\C},\ \ {\rm and}\ \ 
\mathscr{A}\cap \mathscr{A}^*=\{\,z\in\C\,:\, |z|=1\,\}.$$

\end{definition}   

We are now ready for our main result.

\begin{theorem}\label{thmsf-dt-g}
Let $\Phi(z)\in\R(z)^{n\times n}$ be a spectrum of normal rank $\mathrm{rk}(\Phi)=r\neq 0$.
Let $\mathscr{A}_p$ and $\mathscr{A}_z$ be two unmixed-symplectic sets.
Then, there exists a function $W(z)\in\R(z)^{r\times n}$ such that
\begin{enumerate}
\item $\Phi(z)=W^*(z)W(z)$. \label{item:thmsf-dt-g(1)}
\item $W(z)$ is analytic in $\mathscr{A}_p$ and its right inverse $W^{-R}(z)$ is analytic in $\mathscr{A}_z$. \label{item:thmsf-dt-g(2)}
\item \label{item:thmsf-dt-g(3)} $W(z)$ is {\em stochastically minimal}, i.e., the McMillan degree of $W(z)$ is a half of the McMillan degree of $\Phi(z)$.
\newcounter{temp}
\setcounter{temp}{\value{enumi}}
\end{enumerate}
Moreover,
\begin{enumerate}
\setcounter{enumi}{\value{temp}}
\item 
\label{item:thmsf-dt-g(4)}
If $\mathscr{A}_p=\mathscr{A}_z$ then 
$W(z)$ satisfying points \ref{item:thmsf-dt-g(1)}), and \ref{item:thmsf-dt-g(2)}) is unique up to a constant, orthogonal matrix multiplier on the left, i.e., if $W_1(z)$ also satisfies points \ref{item:thmsf-dt-g(1)}), and \ref{item:thmsf-dt-g(2)})  then  $W_1(z)=TW(z)$ where $T\in\R^{r\times r}$ is orthogonal.
Therefore, if $\mathscr{A}_p=\mathscr{A}_z$, points \ref{item:thmsf-dt-g(1)}) and \ref{item:thmsf-dt-g(2)}) imply point \ref{item:thmsf-dt-g(3)}).
%\item\label{item:thmsf-dt-g(5)}
%$W(z)$ satisfying points \ref{item:thmsf-dt-g(1)}), \ref{item:thmsf-dt-g(2)}) and \ref{item:thmsf-dt-g(3)})
%is unique up to a constant, orthogonal matrix multiplier on the left, i.e., if $W_1(z)$ also satisfies points \ref{item:thmsf-dt-g(1)}), \ref{item:thmsf-dt-g(2)}) and \ref{item:thmsf-dt-g(3)}), then  $W_1(z)=TW(z)$ where $T\in\R^{r\times r}$ is orthogonal. 
\item
\label{item:thmsf-dt-g(6)} 
If  $\Phi(z)=L^*(z) L(z)$ is any factorization  in which $L(z)\in\R(z)^{r\times n}$ is analytic in $\mathscr{A}_z$, then $L(z)=V(z)W(z)$, $V(z)\in\R(z)^{r\times r}$ being a para-unitary matrix analytic in $\mathscr{A}_z$. 
Moreover, given an arbitrary para-unitary matrix $V(z)\in\R(z)^{r\times r}$ being analytic in $\mathscr{A}_p$,  $L(z):=V(z)W(z)$ is analytic in $\mathscr{A}_p$ and satisfies $\Phi(z)=L^*(z) L(z)$, so that, if $\mathscr{A}_p=\mathscr{A}_z=:\mathscr{A}$ then 
$\Phi(z)=L^*(z) L(z)$ is a factorization  in which $L(z)\in\R(z)^{r\times n}$ is analytic in $\mathscr{A}$ if and only if $L(z)=V(z)W(z)$, $V(z)\in\R(z)^{r\times r}$ being a para-unitary matrix analytic in $\mathscr{A}$.
\item If $\Phi(z)$ is analytic on the unit circle, then points \ref{item:thmsf-dt-g(1)})-\ref{item:thmsf-dt-g(6)}) still hold even if $\mathscr{A}_p$ is weakly unmixed-symplectic.
\label{item:thmsf-dt-g(7)}
\item If $\Phi(z)$ is analytic on the unit circle and the rank of $\Phi(z)$ is constant on the unit circle, then points \ref{item:thmsf-dt-g(1)})-\ref{item:thmsf-dt-g(6)}) still hold even if $\mathscr{A}_p$ and/or $\mathscr{A}_z$ are weakly unmixed-symplectic.
 \label{item:thmsf-dt-g(8)}
\end{enumerate}
\end{theorem}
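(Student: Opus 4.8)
The plan is to establish the existence claims \ref{item:thmsf-dt-g(1)})--\ref{item:thmsf-dt-g(3)}) by an explicit construction, and then to derive the uniqueness claim \ref{item:thmsf-dt-g(4)}), the characterization \ref{item:thmsf-dt-g(6)}), and the two weakly unmixed-symplectic extensions \ref{item:thmsf-dt-g(7)})--\ref{item:thmsf-dt-g(8)}) from the rigidity of para-unitary factors. \emph{Reduction.} Since $\Phi=\Phi^*$, its scalar denominator may be taken symmetric and hence factors as $p(z)\,p^*(z)$, where the roots of $p$ (with any unit-circle roots split according to a fixed convention) are chosen to lie in $\mathscr{A}_p^*$. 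Consequently $\Pi(z):=p(z)p^*(z)\,\Phi(z)$ is a para-Hermitian L-polynomial matrix that is positive semi-definite on the unit circle and has the same normal rank $r$ as $\Phi$. It therefore suffices to factor the \emph{polynomial} spectrum $\Pi(z)=M^*(z)M(z)$ with $M(z)$ an $r\times n$ L-polynomial matrix of full row rank, with the zeros of $M$ placed in $\mathscr{A}_z^*$, and then to set $W(z):=p^{-1}(z)\,M(z)$. The factor $p^{-1}$ puts exactly one pole of each conjugate-reciprocal pair in $\mathscr{A}_p^*$, so that $W$ is analytic in $\mathscr{A}_p$; since the zeros of $W$ coincide with those of $M$ and lie in $\mathscr{A}_z^*$, the right inverse $W^{-R}$ is analytic in $\mathscr{A}_z$; and $W^*W=p^{-*}M^*M p^{-1}=\Phi$. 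This gives \ref{item:thmsf-dt-g(1)}) and \ref{item:thmsf-dt-g(2)}).

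\emph{The core factorization.} The heart of the argument, and the step I expect to be the main obstacle, is the factorization $\Pi=M^*M$ of an arbitrary para-Hermitian L-polynomial matrix that is positive semi-definite on the unit circle, with full row rank $r$, minimal degree, and a prescribed splitting of the zeros between $\mathscr{A}_z$ and $\mathscr{A}_z^*$. This is the discrete-time counterpart of the Oono--Yasuura procedure, and I would carry it out by induction on the degree, relying on the preliminary results of Section~\ref{sec:pre-analysis}: at each stage one extracts, via a unimodular/Smith-type reduction, an elementary para-Hermitian factor carrying a conjugate-reciprocal pair of zeros, assigning the representative in $\mathscr{A}_z^*$ to $M$ and its partner to $M^*$, until the residual spectrum has constant full rank on the whole plane and can be completed by a constant congruence. \emph{Minimality.} Writing $\delta(\cdot)$ for the McMillan degree, every factor obeys $\delta(\Phi)=\delta(W^*W)\le\delta(W^*)+\delta(W)=2\,\delta(W)$, so $\delta(W)\ge\tfrac12\delta(\Phi)$ for \emph{any} spectral factor of size $r\times n$. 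In the factor just built the poles of $W$ lie in $\mathscr{A}_p^*$ and those of $W^*$ in the disjoint set $\mathscr{A}_p$, so no polar cancellation occurs in $W^*W$ and a Smith--McMillan count yields $2\,\delta(W)=\delta(\Phi)$; thus $W$ attains the lower bound and is stochastically minimal, proving \ref{item:thmsf-dt-g(3)}).

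\emph{Uniqueness and characterization.} Both \ref{item:thmsf-dt-g(4)}) and \ref{item:thmsf-dt-g(6)}) rest on one observation: if $W$ and $L$ are $r\times n$ rational matrices of full row rank with $W^*W=L^*L$, then $L=VW$ with $V:=L\,W^{-R}\in\R(z)^{r\times r}$, and substituting into $W^*W=L^*L$ and cancelling the full-rank factors $W,W^*$ forces $V^*V=I$, i.e.\ $V$ is para-unitary. For \ref{item:thmsf-dt-g(6)}) one only checks analyticity: if $L$ is analytic in $\mathscr{A}_z$ then, as $W^{-R}$ is analytic in $\mathscr{A}_z$, so is $V=L\,W^{-R}$; conversely, for any para-unitary $V$ analytic in $\mathscr{A}_p$ the product $L:=VW$ is analytic in $\mathscr{A}_p$ and satisfies $L^*L=W^*V^*VW=W^*W=\Phi$, and the stated equivalence for $\mathscr{A}_p=\mathscr{A}_z$ is the conjunction of the two. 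For \ref{item:thmsf-dt-g(4)}), take $\mathscr{A}_p=\mathscr{A}_z=:\mathscr{A}$ and let $W_1$ be a second factor satisfying \ref{item:thmsf-dt-g(1)}) and \ref{item:thmsf-dt-g(2)}); then $V=W_1W^{-R}$ and $V^{-1}=W\,W_1^{-R}$ are both products of matrices analytic in $\mathscr{A}$, so $V$ and $V^{-1}$ are analytic in $\mathscr{A}$. Being para-unitary, $V$ has no poles on the unit circle, and $V^{-1}=V^*$ analytic in $\mathscr{A}$ forces $V$ analytic in $\mathscr{A}^*$; hence $V$ is analytic on all of $\overline{\C}$ and therefore a constant matrix $T$ with $T^\top T=I$, i.e.\ real orthogonal, so $W_1=TW$. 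Since the minimal degree is preserved under such left multiplication, \ref{item:thmsf-dt-g(1)}) and \ref{item:thmsf-dt-g(2)}) then imply \ref{item:thmsf-dt-g(3)}).

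\emph{The weakly unmixed-symplectic cases.} Finally, \ref{item:thmsf-dt-g(7)}) and \ref{item:thmsf-dt-g(8)}) follow by noting that, under the extra hypotheses, enlarging a region to include the unit circle has no effect. If $\Phi$ is analytic on the unit circle then $\Pi$ has no poles there, so the constructed $W$ is analytic on the unit circle; the unit circle, which is the only set added in passing from an unmixed-symplectic to a weakly unmixed-symplectic $\mathscr{A}_p$, therefore carries no poles of $W$, and the whole construction together with the uniqueness and characterization arguments goes through unchanged. If in addition the rank of $\Phi$ is constant on the unit circle, then $\Phi$, and hence $W$, has full rank $r$ with no zeros on the unit circle, so $W^{-R}$ is analytic there and the same relaxation may be applied to $\mathscr{A}_z$, which yields \ref{item:thmsf-dt-g(8)}).
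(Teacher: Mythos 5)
Your derivative arguments are essentially sound and coincide with the paper's: the para-unitary quotient $V:=L\,W^{-R}$ with $V^*V=I_r$ and the norm computation $(L-VW)^*(L-VW)=0$ give points \ref{item:thmsf-dt-g(4)}) and \ref{item:thmsf-dt-g(6)}), your Liouville rigidity argument for $V$ is exactly the paper's Lemma~\ref{lemma2}, and your treatment of points \ref{item:thmsf-dt-g(7)})--\ref{item:thmsf-dt-g(8)}) matches the paper's. The genuine gap is that the existence core is deferred rather than proved: the factorization $\Pi=M^*M$ with prescribed zero placement \emph{is} the discrete-time Oono--Yasuura step, and it is precisely what the paper spends Sections~\ref{sec:pre-analysis}--\ref{sec:main-theorem} constructing. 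Your sketch breaks down at the hard cases. First, ``extracting an elementary para-Hermitian factor carrying a conjugate-reciprocal pair of zeros'' degenerates on the unit circle, where the pair collapses to a single point; splitting there requires the circle structural indices of $\Pi$ to be even, which follows from positivity and not from para-Hermitianity alone | the paper proves it via a Binet--Cauchy/Cauchy--Schwarz argument on minors (Lemmata~\ref{lemma5-pre} and~\ref{lemma5}). The same issue already undermines your reduction: writing the scalar denominator as $p(z)p^*(z)$ with real $p$ forces even multiplicity of its unit-circle roots, which you assume via ``a fixed convention'' without proof. Second, your induction has no mechanism for the rank-deficient case $r<n$ (scalar elementary extractions do not reduce an $n\times n$ matrix of normal rank $r<n$ to a ``residual of constant full rank''; the paper first compresses via Smith--McMillan and reduces everything to an $r\times r$ L-unimodular $\Psi(z)$, whose L-polynomiality rests on a nontrivial divisibility argument and which is then factored by the explicit degree-reduction algorithm whose termination is guaranteed by Lemma~\ref{lemma7}); nor for the distinguished points $0$ and $\infty$, where the unmixed-symplectic geometry places exactly one of them in $\mathscr{A}_p$ and your $W=p^{-1}M$ (with $M$ Laurent) can acquire a pole there | the paper needs $\Lambda$ to absorb the structural indices at $z=0$ (Lemma~\ref{lemma-review}) plus a separate contradiction argument to exclude a pole of $W$ at infinity.

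The minimality argument also contains a concrete error. You claim the poles of $W$ lie in $\mathscr{A}_p^*$ and those of $W^*$ in the disjoint set $\mathscr{A}_p$, so that ``no polar cancellation occurs.'' But both sets exclude the unit circle, while $\Phi$ | and hence any spectral factor | may well have unit-circle poles, and there $W$ and $W^*$ have the \emph{same} poles (for $|\alpha|=1$ the reciprocal of $\alpha$ is $\bar\alpha$, which is again a pole of $W$ by realness). The correct count is not ``no cancellation'' but the doubling in the paper's equation \eqref{MMd=Sod}, namely $\delta(\Phi;p_i)=2\,\delta(W;p_i)$ for $|p_i|=1$, and establishing it (together with ruling out cancellations between zeros of $W^*$ and poles of $W$, and the bookkeeping at $0$ and $\infty$) is why the paper bypasses degree additivity entirely and reads all pole degrees off the diagonal Smith--McMillan relation $D(z)=\Sigma(z)D_+^*(z)D_+(z)$. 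Your a priori bound $\delta_M(\Phi)\le\delta_M(W^*)+\delta_M(W)=2\,\delta_M(W)$, valid for every factor, is correct and is a nice observation not made explicit in the paper, but without the corrected circle and $0/\infty$ count it does not close point \ref{item:thmsf-dt-g(3)}).
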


Of course the most common requirement in control theory is that $W(z)$ is outer which correspond to setting $\mathscr{A}_p=\mathscr{A}_z=\{\,z\in\overline{\C}\,:\,|z|>1 \,\}$ in the general case, $\mathscr{A}_p=\{\,z\in\overline{\C}\,:\,|z|\geq 1 \,\}$ and $\mathscr{A}_z=\{\,z\in\overline{\C}\,:\,|z|>1 \,\}$
in the case when $\Phi(z)$ is analytic on the unit circle and $\mathscr{A}_p=\mathscr{A}_z=\{\,z\in\overline{\C}\,:\,|z|\geq1 \,\}$ when $\Phi(z)$ is analytic on the unit circle and the rank of $\Phi(z)$ is constant on the unit circle. This particular case of the previous result corresponds to the following result whose first 6 points are the discrete-time counterpart of the celebrated Youla's Theorem \cite[Thm.2]{Youla-1961}.

\begin{theorem}\label{thmsf-dt}
Let $\Phi(z)\in\R(z)^{n\times n}$ be a   spectrum of normal rank  $\mathrm{rk}(\Phi)=r\neq 0$. Then, there exists a matrix $W(z)\in\R(z)^{r\times n}$ such that
\begin{enumerate}
\item $\Phi(z)=W^*(z)W(z)$. \label{item:thmsf-dt(i)}
\item $W(z)$ and its right inverse $W^{-R}(z)$ are both analytic in $\{\,z\in\overline{\C}\,:\,|z|>1 \,\}$. \label{item:thmsf-dt(ii)}
\item $W(z)$ is unique up to a constant, orthogonal matrix multiplier on the left, i.e., if $W_1(z)$ also satisfies points \ref{item:thmsf-dt(i)}) and \ref{item:thmsf-dt(ii)}), then  $W_1(z)=TW(z)$ where $T\in\R^{r\times r}$ is orthogonal. \label{item:thmsf-dt(iii)}
\item Any factorization of the form $\Phi(z)=L^*(z) L(z)$ in which $L(z)\in\R(z)^{r\times n}$ is analytic in $\{\,z\in\overline{\C}\,:\,|z|>1 \,\}$, has the form $L(z)=V(z)W(z)$, where $V(z)\in\R(z)^{r\times r}$ is a para-unitary matrix analytic in $\{\,z\in\overline{\C}\,:\,|z|>1 \,\}$. Conversely, any $L(z)=V(z)W(z)$, where $V(z)\in\R(z)^{r\times r}$ is a para-unitary matrix analytic in $\{\,z\in\overline{\C}\,:\,|z|>1 \,\}$, is a spectral factor of $W(z)$ analytic in $\{\,z\in\overline{\C}\,:\,|z|>1 \,\}$. \label{item:thmsf-dt(iv)}
\item If $\Phi(z)$ is analytic on the unit circle, then $W(z)$ is analytic in $\{\, z\in\overline{\C}\,:\,|z|\geq 1\, \}$. \label{item:thmsf-dt(v)}
\item If $\Phi(z)$ is analytic on the unit circle and the rank of $\Phi(z)$ is constant on the unit circle, then $W(z)$ and its right inverse $W^{-R}(z)$ are both analytic in $\{\, z\in\overline{\C}\,:\,|z|\geq 1\, \}$.
 \label{item:thmsf-dt(vi)}
\item \label{item:thmsf-dt(vii)} $W(z)$ satisfying points \ref{item:thmsf-dt(i)}) and
\ref{item:thmsf-dt(ii)}) is {\em stochastically minimal}, i.e., the McMillan degree of $W(z)$ is a half of the McMillan degree of $\Phi(z)$.
\end{enumerate}
\end{theorem}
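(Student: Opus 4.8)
The plan is to obtain Theorem \ref{thmsf-dt} as a direct specialization of the general result, Theorem \ref{thmsf-dt-g}, which precedes it and already carries all of the analytic and algebraic content. The ``outer'' factorization corresponds to choosing the analyticity regions to be the open exterior of the unit disc, so I would set $\mathscr{A}_p=\mathscr{A}_z=\mathscr{A}:=\{\,z\in\overline{\C}\,:\,|z|>1\,\}$ and first check that this $\mathscr{A}$ is admissible, i.e. unmixed-symplectic in the sense of Definition \ref{def:unmixed-symplectic}. A short computation gives $\mathscr{A}^*=\{\,z\,:\,z^{-1}\in\mathscr{A}\,\}=\{\,z\,:\,|z|<1\,\}$, whence $\mathscr{A}\cup\mathscr{A}^*=\overline{\C}\setminus\{\,z\in\C\,:\,|z|=1\,\}$ and $\mathscr{A}\cap\mathscr{A}^*=\emptyset$; the only point requiring care is the bookkeeping at $z=0$ and $z=\infty$, where one uses $\infty\in\mathscr{A}$ and $0\in\mathscr{A}^*$.

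With this choice I would read off points \ref{item:thmsf-dt(i)})--\ref{item:thmsf-dt(iv)}) and \ref{item:thmsf-dt(vii)}) directly from Theorem \ref{thmsf-dt-g}. Concretely, point \ref{item:thmsf-dt-g(1)}) yields the factorization \ref{item:thmsf-dt(i)}); point \ref{item:thmsf-dt-g(2)}), specialized to $\mathscr{A}_p=\mathscr{A}_z=\mathscr{A}$, gives the joint analyticity of $W(z)$ and $W^{-R}(z)$ on $\{\,|z|>1\,\}$ asserted in \ref{item:thmsf-dt(ii)}); because here $\mathscr{A}_p=\mathscr{A}_z$, point \ref{item:thmsf-dt-g(4)}) supplies the uniqueness up to a constant orthogonal left factor \ref{item:thmsf-dt(iii)}), while stochastic minimality \ref{item:thmsf-dt(vii)}) is point \ref{item:thmsf-dt-g(3)}) (and, in the present case, also follows from the closing sentence of point \ref{item:thmsf-dt-g(4)})); and the ``if and only if'' half of point \ref{item:thmsf-dt-g(6)}) with $\mathscr{A}_p=\mathscr{A}_z=\mathscr{A}$ is precisely the two-way characterization of exterior-analytic factors in \ref{item:thmsf-dt(iv)}). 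No further argument is needed for these items.

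For the refinements \ref{item:thmsf-dt(v)}) and \ref{item:thmsf-dt(vi)}) I would enlarge the pole region to include the unit circle. When $\Phi(z)$ is analytic on the unit circle, I would verify that $\{\,z\in\overline{\C}\,:\,|z|\geq 1\,\}$ is \emph{weakly} unmixed-symplectic (now $\mathscr{A}^*=\{\,z\,:\,|z|\leq 1\,\}$, so the union is all of $\overline{\C}$ and the intersection is exactly the unit circle) and then invoke point \ref{item:thmsf-dt-g(7)}), which permits $\mathscr{A}_p$ to be weakly unmixed-symplectic under this hypothesis; taking $\mathscr{A}_p=\{\,|z|\geq 1\,\}$ while keeping $\mathscr{A}_z=\{\,|z|>1\,\}$, point \ref{item:thmsf-dt-g(2)}) upgrades the analyticity of $W(z)$ to $\{\,|z|\geq 1\,\}$, giving \ref{item:thmsf-dt(v)}). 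If moreover the rank of $\Phi(z)$ is constant on the unit circle, point \ref{item:thmsf-dt-g(8)}) allows \emph{both} regions to be weakly unmixed-symplectic, so I would take $\mathscr{A}_p=\mathscr{A}_z=\{\,|z|\geq 1\,\}$; point \ref{item:thmsf-dt-g(2)}) then places both $W(z)$ and $W^{-R}(z)$ in $\{\,|z|\geq 1\,\}$, which is \ref{item:thmsf-dt(vi)}).

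Since existence, minimality, uniqueness, and the analyticity control over arbitrary (weakly) unmixed-symplectic regions are all delivered by Theorem \ref{thmsf-dt-g}, there is no real obstacle here beyond correctly matching each conclusion to the corresponding item of the general theorem. The only genuinely delicate points are the elementary verifications that the chosen regions are (weakly) unmixed-symplectic --- in particular the treatment of $z=0$ and $z=\infty$ --- and making sure that the passage from the strict exterior $\{\,|z|>1\,\}$ to its closure $\{\,|z|\geq 1\,\}$ is invoked only under exactly the analyticity, respectively constant-rank, hypotheses on $\Phi(z)$ that points \ref{item:thmsf-dt-g(7)}) and \ref{item:thmsf-dt-g(8)}) require.
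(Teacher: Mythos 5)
Your deduction is clean as a formal specialization, but as a proof of Theorem \ref{thmsf-dt} in this paper it is circular: the paper contains no proof of Theorem \ref{thmsf-dt-g} that is independent of Theorem \ref{thmsf-dt}. The paper deliberately proves Theorem \ref{thmsf-dt} \emph{first}, by an explicit four-step construction | reduction of $\Phi(z)$ to Smith--McMillan form; the symmetric rearrangement $D(z)=\Sigma(z)\Lambda^*(z)\Theta^*(z)\Theta(z)\Lambda(z)$ of Lemma \ref{lemma-review}; reduction, via the left-standard factorization Lemmata \ref{lemma3} and \ref{lemma4}, to a para-Hermitian L-unimodular matrix $\Psi(z)$ positive definite on the unit circle; and an iterative degree-reduction algorithm yielding $\Psi(z)=P^*(z)P(z)$ with $P(z)$ unimodular | together with a Liouville-type uniqueness argument (Lemma \ref{lemma2}) and a pole-degree count for stochastic minimality. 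The proof of Theorem \ref{thmsf-dt-g} is then obtained by \emph{modifying this very construction}: it explicitly re-uses the fact, established in the proof of Theorem \ref{thmsf-dt}, that $\Psi(z)$ is L-unimodular for the standard choice $\mathscr{A}_p=\mathscr{A}_z=\{\,z:|z|>1\,\}$, and its points \ref{item:thmsf-dt-g(7)}) and \ref{item:thmsf-dt-g(8)}) are proved by reference to points \ref{item:thmsf-dt(v)}) and \ref{item:thmsf-dt(vi)}) of Theorem \ref{thmsf-dt}. So invoking Theorem \ref{thmsf-dt-g} here assumes a result whose only available proof rests on the statement you are asked to prove; all of the genuinely hard content (the constructive existence of the spectral factor) is absent from your argument. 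The fact that the paper presents Theorem \ref{thmsf-dt} as a ``particular case'' of Theorem \ref{thmsf-dt-g} describes the logical relation between the \emph{statements}, not the order of proof.

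A secondary, fixable point: even granting Theorem \ref{thmsf-dt-g}, your handling of points \ref{item:thmsf-dt(v)}) and \ref{item:thmsf-dt(vi)}) glosses over the fact that applying the general theorem with enlarged regions produces, a priori, a \emph{different} spectral factor, so ``point \ref{item:thmsf-dt-g(2)}) upgrades the analyticity of $W(z)$'' is not literally available. The missing line is: the factor $W'(z)$ built for the regions $\mathscr{A}_p=\{\,z:|z|\geq 1\,\}$, $\mathscr{A}_z=\{\,z:|z|>1\,\}$ (respectively both closed in case \ref{item:thmsf-dt(vi)})) also satisfies points \ref{item:thmsf-dt(i)}) and \ref{item:thmsf-dt(ii)}), hence by the uniqueness point \ref{item:thmsf-dt(iii)}) one has $W(z)=TW'(z)$ with $T$ constant orthogonal, which transfers the closed-region analyticity of $W'(z)$ (and of $W'^{-R}(z)$) to $W(z)$ (and $W^{-R}(z)$). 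Your verifications that $\{\,z:|z|>1\,\}$ is unmixed-symplectic and $\{\,z:|z|\geq 1\,\}$ is weakly unmixed-symplectic, including the bookkeeping at $0$ and $\infty$, are correct, as is your matching of the remaining items.
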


\begin{remark}
Notice that the hypothesis   $\mathrm{rk}(\Phi)\neq 0$ of the previous results is only assumed to rule out the trivial case of an identically zero spectrum  $\Phi(z)$ for which the only spectral factorizations   clearly correspond to $W(z)=\mathbf{0}_{m,n}$, with $m$ being arbitrary, so that, in this case, $W(z)$ cannot be chosen to be full  row-rank.
\end{remark}

\section{Mathematical preliminaries on rational matrices}\label{sec:problem-statement}

Let $f(z)=p(z)/q(z)\in \R(z)$, $q(z)\neq 0$, be a nonzero rational function. We can always write $f(z)$ in the form
\[
f(z)=\frac{n(z)}{d(z)}(z-\alpha)^\nu, \quad \forall\, \alpha\in \C,
\]
where $\nu$ is an integer and $n(z),\, d(z)\in \R[z]$ are nonzero polynomials such that $n(\alpha)\neq 0$ and $d(\alpha)\neq 0$. The integer $\nu$ is called {\em valuation of f(z) at $\alpha$} and we denote it with the symbol $v_\alpha(f)$. The valuation of $f(z)$ at infinity is defined as $v_\infty(f):=\deg q(z)-\deg p(z)$, where $\deg(\cdot)$ denotes the degree of a polynomial.  If $f(z)$ is the null function, by convention,  $v_\alpha(f)=+\infty$ for every $\alpha\in \overline{\C}$. If $v_\alpha(f)<0$, then $\alpha\in \overline{\C}$ is called a {\em pole} of $f(z)$ of multiplicity $-v_\alpha(f)$. If $v_\alpha(f)>0$, then $\alpha\in \overline{\C}$ is called a {\em zero} of $f(z)$ of multiplicity $v_\alpha(f)$. The rational function $f(z)$ is said to be \emph{proper} if $v_\infty(f)\geq 0$, \emph{strictly proper} if $v_\infty(f)> 0$.

A polynomial matrix $G(z)\in\mathbb{R}[z]^{m\times n}$ is said to be {\em unimodular} if it has a polynomial inverse (either left, right or both). Similarly, a L-polynomial matrix $G(z)\in\mathbb{R}[z,z^{-1}]^{m\times n}$ is said to be {\em L-unimodular} if it has a L-polynomial inverse (either left, right or both). A square polynomial matrix $G(z)\in\mathbb{R}[z]^{n\times n}$ is unimodular if and only if its determinant is a nonzero constant $\alpha\in\R_0$. On the other hand, a square L-polynomial matrix $G(z)\in\mathbb{R}[z,z^{-1}]^{n\times n}$ is L-unimodular if and only if its determinant is a nonzero monomial $\alpha z^{k}$, $\alpha\in\R_0$, $k\in\mathbb{Z}$.

Consider now a nonzero real L-polynomial vector $\mathbf{v}(z)\in\R[z,z^{-1}]^{p}$. We can write it as
\[
\mathbf{v}(z)=\mathbf{v}_k z^k+\mathbf{v}_{k+1} z^{k+1}+\cdots+\mathbf{v}_{K-1} z^{K-1}+\mathbf{v}_K z^K,
\]
with $\mathbf{v}_k$ and $\mathbf{v}_K$, $k\leq K$, nonzero vectors in $\R^p$. We say that the integer $k$ is the {\em minimum-degree} of $\mathbf{v}(z)$, written $\mindeg \mathbf{v}$, while the integer $K$ is the {\em maximum-degree} of $\mathbf{v}(z)$, written $\maxdeg \mathbf{v}$.\footnote{If $\mathbf{v}(z)$ is the zero vector, then $\mindeg \mathbf{v}$ and $\maxdeg \mathbf{v}$ are left undefined.}

Let $G(z)\in\R[z,z^{-1}]^{m\times n}$ and let $k_i$ and $K_i$ be the minimum- and maximum-degree of the $i$-th column of $G(z)$, for all $i=1\dots,m$. We define the {\em highest-column-degree coefficient matrix} of $G(z)$ as the constant matrix $G^{\rm hc}\in\R^{m\times n}$ whose $i$-th column consists of the coefficients of the monomials $z^{K_i}$ in the same column of $G(z)$. Furthermore, we define the {\em lowest-column-degree coefficient matrix} of $G(z)$ as the constant matrix $G^{\rm lc}\in\R^{m\times n}$ whose $i$-th column consists of the coefficients of the monomials $z^{k_i}$ in the same column of $G(z)$. By considering, instead of the columns, the rows of $G(z)$ we can define, by following the same lines in the above, the {\em highest-row-degree coefficient matrix} of $G(z)$, $G^{\rm hr}\in\R^{m\times n}$, and the {\em lowest-row-degree coefficient matrix} of $G(z)$, $G^{\rm lr}\in\R^{m\times n}$.

A classical result in rational matrix theory is the following (see, e.g., \cite[Ch.6, \S5]{Kailath-1998}).
\begin{theorem}[Smith-McMillan]
\label{thm:smith-mcmillan}
Let $G(z)\in\R(z)^{m\times n}$ and let $\rk(G)=r$. There exist unimodular matrices $U(z)\in\mathbb{R}[z]^{m\times r}$ and $V(z)\in\mathbb{R}[z]^{r\times n}$ such that
\begin{align}\label{eq:smith-mcmillan-canonic-form}
D(z):&=U(z)G(z)V(z)\nonumber\\
&=\diag\left[\frac{\varepsilon_1(z)}{\psi_1(z)},\frac{\varepsilon_2(z)}{\psi_2(z)},\dots,\frac{\varepsilon_r(z)}{\psi_r(z)}\right],
\end{align}
where $\varepsilon_1(z),\varepsilon_2(z),\dots,\varepsilon_r(z), \psi_1(z), \psi_2(z),\dots,\psi_r(z)\in\R[z]$ are monic polynomials satisfying the conditions: {(i)} $\varepsilon_i(z)$ and $\psi_i(z)$ are relatively prime, $i=1,2,\dots,r$, {(ii)} $\varepsilon_i(z)\mid \varepsilon_{i+1}(z)$ and $\psi_{i+1}(z)\mid \psi_{i}(z)$, $i=1,2,\dots,r-1$.{\footnote{We write $p(z) \mid q(z)$, with $p(z), q(z)\in \R[z]$, to say that $p(z)$ divides $q(z)$.}}
\end{theorem}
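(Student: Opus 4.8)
The plan is to reduce the rational statement to the classical \emph{Smith canonical form} for polynomial matrices and then to clear denominators. First I would write $G(z)=N(z)/d(z)$, where $d(z)\in\R[z]$ is the monic least common multiple of the denominators of all entries of $G(z)$ and $N(z)\in\R[z]^{m\times n}$ is polynomial with $\rk(N)=\rk(G)=r$. The problem then becomes the diagonalization of the polynomial matrix $N(z)$ by unimodular pre- and post-multiplication.

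The engine of the argument is that $\R[z]$ is a Euclidean domain. I would establish (or invoke) the Smith form: there exist square unimodular $U(z)\in\R[z]^{m\times m}$ and $V(z)\in\R[z]^{n\times n}$ with $U(z)N(z)V(z)=\diag[\gamma_1(z),\dots,\gamma_r(z),0,\dots,0]$ and $\gamma_1\mid\gamma_2\mid\cdots\mid\gamma_r$. The constructive proof proceeds by induction on the size: among all entries reachable by row and column permutations select one of least degree, move it to the $(1,1)$ position, and use division with remainder to annihilate the rest of the first row and column; whenever a nonzero remainder appears the minimal degree strictly drops, so the process terminates with a pivot $\gamma_1$ that divides every entry of the trailing $(m-1)\times(n-1)$ block (otherwise one further column operation would again lower the degree), and one recurses. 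Equivalently, $\gamma_1\cdots\gamma_i=D_i(z)$, the monic gcd of all $i\times i$ minors of $N(z)$, an invariant under unimodular operations, which makes the chain $\gamma_i\mid\gamma_{i+1}$ transparent. Restricting to the first $r$ rows of $U$ and the first $r$ columns of $V$ yields the stated factors and an $r\times r$ diagonal $D(z)$.

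Next I would pass back to $G$: $U(z)G(z)V(z)=\diag[\gamma_1/d,\dots,\gamma_r/d]$, and reduce each fraction to lowest terms, $\gamma_i/d=\varepsilon_i/\psi_i$ with $\gcd(\varepsilon_i,\psi_i)=1$, absorbing leading constants so that $\varepsilon_i,\psi_i$ are monic (the leftover scalars can be pushed into $U$ or $V$ without spoiling unimodularity). Condition (i) then holds by construction. For (ii), writing $\gamma_{i+1}=h\,\gamma_i$ gives $\varepsilon_{i+1}\psi_i=h\,\varepsilon_i\psi_{i+1}$; since $\gcd(\varepsilon_i,\psi_i)=1$ I obtain $\varepsilon_i\mid\varepsilon_{i+1}$, and since $\gcd(\varepsilon_{i+1},\psi_{i+1})=1$ I obtain $\psi_{i+1}\mid\psi_i$. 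The reversal of the divisibility order for the $\psi_i$ is precisely this coprimality phenomenon.

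The hard part will be the Smith form itself, namely justifying that the Euclidean, division-with-remainder reduction both terminates and leaves a pivot dividing the entire trailing block; the subsequent rational bookkeeping is routine once the invariant factors $\gamma_i$ are in hand. The second point requiring care is the claim that the \emph{same} denominator $d(z)$ appears across all diagonal entries before cancellation: this is what makes the $\psi_{i+1}\mid\psi_i$ reversal come out cleanly, and I would make sure to use the definition of $d$ as the lcm of all entrywise denominators rather than merely some common multiple.
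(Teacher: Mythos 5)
Your proposal is correct and is essentially the classical argument: the paper itself does not prove this theorem but cites it as standard (Kailath, Ch.~6, \S 5), and the proof given there proceeds exactly as you do --- write $G(z)=N(z)/d(z)$ with $d(z)$ the monic least common denominator, bring the polynomial matrix $N(z)$ to Smith form via Euclidean division over $\R[z]$, and cancel common factors in each $\gamma_i(z)/d(z)$, with the coprimality argument yielding $\varepsilon_i\mid\varepsilon_{i+1}$ and the reversed chain $\psi_{i+1}\mid\psi_i$. Your two flagged points of care (termination of the pivot reduction and the use of a single common denominator $d(z)$) are handled correctly, so nothing is missing.
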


The rational matrix $D(z)$ in (\ref{eq:smith-mcmillan-canonic-form}) is known as the {\em Smith-McMillan canonical form} of $G(z)$. (In general, we say that a rational matrix is {\em canonic} if it is of the form in (\ref{eq:smith-mcmillan-canonic-form}) and satisfies the conditions of the above theorem.) The (finite) zeroes of $G(z)$ coincide with the zeroes of $\varepsilon_r(z)$ and the (finite) poles of $G(z)$ with the zeroes of $\psi_1(z)$. Note that, unlike what happens in the scalar case, the set of zeroes and poles of a rational matrix may not be disjoint.

Let $G(z)\in\mathbb{R}(z)^{m\times n}$ and write $G(z)=C(z)D(z)F(z)$, where $D(z)$ is the Smith-McMillan form of $G(z)$ and $C(z), F(z)$ are unimodular matrices. If $\mathrm{rk}(G)=m=n$, then the inverse of $G(z)$ has the form
\[
G^{-1}(z)=F^{-1}(z)D^{-1}(z)C^{-1}(z)
\]
and $D^{-1}(z)$ coincides with the Smith-McMillan canonical form of $G^{-1}(z)$, up to a permutation of the diagonal elements. Therefore, the poles of $G^{-1}(z)$ are exactly the zeroes of $G(z)$. In a similar fashion, if $G(z)$ has normal rank $m$ ($n$), there always exists a right (left) inverse of $G(z)$ such that the poles of $G^{-R}(z)$ ($G^{-L}(z)$) coincide with the zeroes of $G(z)$.\footnote{ The latter fact is not true for all the right/left inverses of $G(z)$, since, in general, the zeroes of $G(z)$ are among the poles of all such inverses (see \cite[Ch.6, \S5, Ex.14]{Kailath-1998}).} Indeed, we may take
\begin{align}
G^{-R}(z)&=F^{-R}(z)D^{-1}(z)C^{-1}(z),\label{eq:right-inv}\\
G^{-L}(z)&=F^{-1}(z)D^{-1}(z)C^{-L}(z).\label{eq:left-inv}
\end{align} 
In the following, we consider only right and left inverses of the form (\ref{eq:right-inv}) and (\ref{eq:left-inv}), respectively.

Let $\alpha_1,\alpha_2,\dots,\alpha_t$ be the (finite) zeroes and (finite) poles of $G(z)$.  We can write the Smith-McMillan canonical form of $G(z)$ as 
\begin{align}
\mathrm{diag}\Big[(z-\alpha_1)^{\nu_1^{(1)}}\cdots (z-&\alpha_t)^{\nu_t^{(1)}},\dots,\nonumber\\ &(z-\alpha_1)^{\nu_1^{(r)}}\cdots (z-\alpha_t)^{\nu_t^{(r)}}\Big].\nonumber
\end{align}
The integer exponents $\nu_i^{(1)}\leq \nu_i^{(2)}\leq \cdots \leq \nu_i^{(r)}$, appearing in the above expression, are called the {\em structural indices} of $G(z)$ at $\alpha_i$ and they are used to represent the zero-pole structure at $\alpha_i$ of $G(z)$. To obtain the zero-pole structure at infinity of $G(z)$, we can proceed as follows. We make a change of variable, $z\to \lambda^{-1}$, and compute the Smith-McMillan form of $G(\lambda^{-1})$, then the structural indices of $G(\lambda^{-1})$ at $\lambda=0$ will give the set of structural indices of $G(z)$ at $z=\infty$.
 Lastly, if $p_1,\dots,p_h$ are the distinct poles (the pole at infinity included) of $G(z)$, we recall that the {\em McMillan degree} of $G(z)$ can be defined as (see, e.g., \cite[Ch.6, \S5]{Kailath-1998})
\begin{equation}\label{eq:mcmillan-degree}
\delta_M(G):=\sum_{i=1}^h\delta(G;p_i),
\end{equation}
where $\delta(G;p_i)$ is the degree of the pole $p_i$, i.e., the largest multiplicity that $p_i$ possesses as a pole of any minor of $G(z)$. In particular, if $D(z)$ in (\ref{eq:smith-mcmillan-canonic-form}) is the Smith-McMillan form of $G(z)$ and $G(z)$ has no pole at infinity then $\delta(G;p_i)=\delta(D;p_i)$ for all $i=1,\dots,h$, which, in turn, yields $\delta_M(G)=\delta_M(D)=\sum_{i=1}^r \deg \psi_i(z)$.

%{\gia As a final remark, consider $G(z)\in\mathbb{R}(z)^{m\times n}$. If $G(z)$ is s.t. $\mathrm{rk}(G)=n=m$, then the pole (zero) structure of $G(z)$ is identical to the (zero) pole structure of its inverse.  In a similar fashion, if $G(z)$ has normal rank $m$ ($n$), there always exists a right (left) inverse of $G(z)$ s.t. its pole (zero) structure coincide with the zero (pole) structure of $G(z)$.
%\footnote{Indeed, by Theorem, $G(z)=C(z)D(z)F(z)$ with $C(z)$ and $F(z)$ unimodular. Hence, a ``minimal'' right inverse is given by $G^{-R}(z)=F^{-R}(z)D^{-1}(z)C^{-1}(z)$, where $F^{-R}(z)$ and $C^{-1}(z)$ are unimodular.} 
%These generalized inverse are said to be ``minimal'' and in the following we will restrict our attention only on this kind of right/left inverses.}

\section{Preliminary results}\label{sec:pre-analysis}
In this section, we collect a set of  lemmata which we will exploit in the constructive proof of the main theorem.

\begin{lemma}\label{lemma1}
A matrix $G(z)\in\R(z)^{m\times n}$ is analytic in $\mathbb{C}_0$ together with its inverse (either right, left or both) if and only if it is a L-unimodular polynomial matrix.
\end{lemma}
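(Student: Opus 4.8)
The plan is to reduce the statement to the elementary scalar fact that a rational function is analytic in $\C_0$ precisely when it is a Laurent polynomial, and then to combine this observation with the very definition of L-unimodularity. The matrix statement will follow entrywise with essentially no extra work.

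First I would establish the scalar characterization: a nonzero $f\in\R(z)$ is analytic in $\C_0$ if and only if $f\in\R[z,z^{-1}]$. Using the valuation notation introduced above, analyticity of $f$ in $\C_0$ means $v_\alpha(f)\geq 0$ for every finite $\alpha\neq 0$, i.e.\ $f$ has no pole at any nonzero finite point. Writing $f=p(z)/q(z)$ with $p,q\in\R[z]$ coprime, the finite poles of $f$ are exactly the roots of $q(z)$; since none of these may lie in $\C_0$, every root of $q$ is $0$, so $q(z)=c\,z^{k}$ for some $c\in\R_0$ and $k\geq 0$. Hence $f(z)=p(z)/(c\,z^{k})$ is a Laurent polynomial. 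Conversely, any $f\in\R[z,z^{-1}]$ has its only possible finite pole at $z=0$, so it is analytic in $\C_0$. It is worth stressing here that analyticity in $\C_0$ imposes \emph{no} condition at $z=\infty$, which is exactly consistent with the fact that a Laurent polynomial is allowed to have a pole both at $0$ and at $\infty$. Applying this equivalence to each entry yields immediately that $G(z)\in\R(z)^{m\times n}$ is analytic in $\C_0$ if and only if $G(z)\in\R[z,z^{-1}]^{m\times n}$.

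With this in hand both implications are short. For the ``if'' direction, suppose $G(z)$ is an L-unimodular L-polynomial matrix. By definition its entries are Laurent polynomials, so $G(z)$ is analytic in $\C_0$, and it possesses an L-polynomial inverse (left, right, or both), whose entries are likewise Laurent polynomials and hence analytic in $\C_0$. For the ``only if'' direction, assume $G(z)$ is analytic in $\C_0$ together with an inverse analytic in $\C_0$. By the entrywise equivalence, $G(z)\in\R[z,z^{-1}]^{m\times n}$ and its inverse also lies in $\R[z,z^{-1}]$; that is, $G(z)$ is an L-polynomial matrix admitting an L-polynomial inverse, which is precisely the definition of L-unimodularity. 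The argument is uniform in the three cases (left, right, or two-sided inverse), since the relation $G^{-1}G=I$, $GG^{-R}=I$, or $G^{-L}G=I$ is never used beyond identifying the object called ``inverse'' whose analyticity is assumed.

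The main point requiring care is thus not any delicate estimate but rather the clean separation of the finite-pole condition from the behaviour at infinity: one must observe that ``analytic in $\C_0$'' constrains only the finite nonzero points and therefore characterizes exactly the Laurent-polynomial class, rather than the ordinary polynomial class. Once this is pinned down, the lemma is a direct restatement of the definition of L-unimodularity, and the three inverse variants are handled simultaneously.
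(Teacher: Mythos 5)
Your proof is correct, but it takes a genuinely different route from the paper's in the ``only if'' direction. The paper reduces to the Smith--McMillan form: it writes $G(z)=C(z)D(z)F(z)$ with $C(z),F(z)$ unimodular (after a WLOG reduction to full column rank), observes that analyticity of $G(z)$ in $\C_0$ forces all the denominators $\psi_i(z)$ of $D(z)$ to be monomials, that analyticity of the inverse forces the same for the numerators $\varepsilon_i(z)$, and concludes that $G(z)$ is a product of three L-unimodular matrices. You instead prove the scalar fact that a rational function is analytic in $\C_0$ if and only if it is a Laurent polynomial (via coprime numerator/denominator, so the denominator must be $cz^k$), apply it entrywise to both $G(z)$ and the assumed analytic inverse, and then invoke the paper's definition of L-unimodularity verbatim; with that definition your step is not circular but a direct restatement, and the whole content of the lemma is indeed the scalar valuation argument. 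Your approach is more elementary --- no Smith--McMillan machinery, no rank normalization, and the three inverse variants (left, right, two-sided) are handled uniformly --- while the paper's route yields as a byproduct the structural information that the canonical form $D(z)$ of such a $G(z)$ is itself L-unimodular, i.e.\ that the entire zero/pole structure is concentrated at $z=0$, which is in the spirit of how structural indices at the origin are tracked later (e.g.\ in Lemma~\ref{lemma-review}). Two small points to keep in mind: your entrywise step tacitly uses that the inverse in the hypothesis is a rational matrix, which is guaranteed by the paper's standing convention that left/right inverses are taken of the form (\ref{eq:right-inv})--(\ref{eq:left-inv}); and in the ``if'' direction you exhibit the L-polynomial inverse itself as the inverse analytic in $\C_0$, which matches the paper's reading of the statement (the paper's own ``if'' argument, via the location of zeroes and poles of $G(z)$, covers the canonical inverse as well, but this distinction is immaterial to the lemma as used).
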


\begin{proof}
If $G(z)$ is L-unimodular, then $G(z)$ has an inverse (either left, right or both) which is L-polynomial. Hence, the only possible finite zeroes/poles of $G(z)$ are located at $z=0$. This, in turn, implies that $G(z)$ must be analytic together with its inverse in $\C_0$.

Vice versa, suppose that $G(z)$ is analytic with its inverse in $\C_0$. Firstly, we notice that the existence of a left or right inverse for $G(z)$ implies that the normal rank of $G(z)$ is either $r=n$ or $r=m$, respectively. Without loss of generality, we can suppose that $r=n$. By the Smith-McMillan Theorem, we can write $G(z)=C(z)D(z)F(z)$, where $C(z)\in\mathbb{R}[z]^{m\times n},\  F(z)\in\mathbb{R}[z]^{n\times n}$ are unimodular (and, a fortiori, L-unimodular) polynomial matrices, respectively, and $D(z)\in\mathbb{R}(z)^{n\times n}$ is diagonal, canonic of the form 
\[
D(z)=\mathrm{diag} \left[\frac{\varepsilon_1(z)}{\psi_1(z)}, \frac{\varepsilon_2(z)}{\psi_2(z)},\dots , \frac{\varepsilon_n(z)}{\psi_n(z)} \right].
\]
The analyticity of $G(z)$ in $\C_0$ implies that all $\psi_i(z)\in\R[z]$, $i=1,\dots,n$, are nonzero monomials. The Smith-McMillan canonical form of $G^{-L}(z)$ is given by 
\[
\mathrm{diag} \left[\frac{\psi_n(z)}{\varepsilon_n(z)}, \frac{\psi_{n-1}(z)}{\varepsilon_{n-1}(z)},\dots , \frac{\psi_1(z)}{\varepsilon_1(z)} \right].
\]
Hence, the analyticity of $G^{-L}(z)$ in $\C_0$ implies that all $\varepsilon_i(z)\in\R[z]$, $i=1,\dots,n$, are nonzero monomials. Therefore, $D(z)$ is a L-unimodular polynomial matrix. Since $G(z)=C(z)D(z)F(z)$ is the product of three L-unimodular polynomial matrices, $G(z)$ must be a L-unimodular polynomial matrix.
\end{proof}

\begin{lemma}\label{lemma2}
Let $\mathscr{A}\subset \overline{\C}$ be an unmixed-symplectic set. A para-unitary matrix $G(z)\in\R(z)^{n\times n}$ analytic in $\mathscr{A}$ with inverse analytic in $\mathscr{A}$ is a constant orthogonal matrix.
\end{lemma}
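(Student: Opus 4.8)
The plan is to use the defining identity of a para-unitary matrix to convert the two analyticity hypotheses into analyticity statements about $G(z)$ alone, then to exclude poles on the unit circle, so that $G(z)$ emerges as a rational matrix with no poles anywhere in $\overline{\C}$ and is therefore constant.

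First I would record that para-unitarity, $G^*(z)G(z)=I_n$, is exactly the statement $G^{-1}(z)=G^*(z)=G^\top(z^{-1})$. Since $G^*$ has a pole at $\beta$ precisely when $G$ has a pole at $\beta^{-1}$, and since $\beta\in\mathscr{A}$ iff $\beta^{-1}\in\mathscr{A}^*$, the analyticity of $G^{-1}(z)=G^*(z)$ in $\mathscr{A}$ is equivalent to the analyticity of $G(z)$ in $\mathscr{A}^*$. Combining this with the hypothesis that $G(z)$ is analytic in $\mathscr{A}$, I conclude that $G(z)$ has no poles in $\mathscr{A}\cup\mathscr{A}^*=\overline{\C}\setminus\{z\in\C:|z|=1\}$, the point at infinity included.

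Next I would dispose of the unit circle. On $|z|=1$ we have $z^{-1}=\overline{z}$, so for the real rational matrix $G$ the identity $G^*(e^{j\omega})=\overline{G(e^{j\omega})}^\top$ holds wherever $G$ is defined there, and para-unitarity then gives $\overline{G(e^{j\omega})}^\top\, G(e^{j\omega})=I_n$; that is, $G$ is unitary on the unit circle, so every entry satisfies $|[G]_{ij}(e^{j\omega})|\leq 1$. A pole of some entry at a point of $\{|z|=1\}$ would force that entry to blow up as $z$ approaches the pole along the circle, contradicting this bound. Hence $G(z)$ has no poles on the unit circle either, and together with the previous step $G(z)$ is analytic on all of $\overline{\C}$.

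Finally, a rational function with no poles in $\overline{\C}$ is a polynomial of degree $0$, i.e. a constant; applying this entrywise gives $G(z)\equiv G_0$ for some $G_0\in\R^{n\times n}$, and substituting into $G^*(z)G(z)=I_n$ yields $G_0^\top G_0=I_n$, so $G_0$ is orthogonal, as claimed. I expect the only delicate point to be the treatment of the unit circle: the analyticity hypotheses say nothing directly about $\{|z|=1\}$, so one must genuinely invoke the unitarity (equivalently, boundedness) of a para-unitary matrix there to exclude poles, the remainder being routine bookkeeping with the symplectic symmetry $\mathscr{A}\leftrightarrow\mathscr{A}^*$ and the elementary fact that a pole-free rational function is constant.
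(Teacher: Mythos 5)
Your proof is correct and follows essentially the same route as the paper's: use para-unitarity to transfer analyticity of the inverse in $\mathscr{A}$ to analyticity of $G(z)$ in $\mathscr{A}^*$, use unitarity on the unit circle to bound the entries and rule out poles there, and conclude that the pole-free rational matrix is a constant orthogonal matrix. The only cosmetic difference is in the last step, where the paper invokes Liouville's theorem while you use the elementary fact that a rational function with no poles in $\overline{\C}$ is constant; these are interchangeable here.
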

\begin{proof}
The analyticity of the inverse of  $G(z)$ in $\mathscr{A}$ implies that of $G(z^{-1})$ in the same region, and therefore that of $G(z)$ in $\mathscr{A}^*$.
We also notice that in the unit circle it holds $G^*(e^{j\omega}) G(e^{j\omega})=G^\top(e^{-j\omega}) G(e^{j\omega})=I_n$, $\forall\,\omega \in [0,2\pi)$, and we can write out the diagonal elements in expanded form as
\[
\sum_{i=1}^n |[G(e^{j\omega})]_{ik}|^2=1, \ \ \ \ \forall\,  k=1,\dots,n, \ \forall\, \omega \in [0,2\pi).
\]
The latter equation implies that
\[
|[G(e^{j\omega})]_{ik}|\leq 1, \ \ \ \ \forall\, i,\, k=1,\dots,n, \ \forall\, \omega \in [0,2\pi),
\]
and, therefore, we proved the analyticity of $G(z)$ on the unit circle. By Definition \ref{def:unmixed-symplectic} of unmixed-symplectic set, it follows that $G(z)$ is analytic on the entire extended complex plane. This means that $G(z)$ is analytic and bounded in $\C$. Hence, we can apply Liouville's Theorem \cite[Ch.V,  Thm.1.4]{Lang-1985} and conclude that $G(z)$ must be a constant orthogonal matrix.
\end{proof}
\begin{remark}
With the usual choice $\mathscr{A}=\{\, z\in\overline{\C} \,:\, |z|> 1\,\}$, the previous lemma reads as follows: A para-unitary matrix $G(z)\in\R(z)^{n\times n}$ analytic in $\{\, z\in\overline{\C} \,:\, |z|> 1\,\}$ with inverse analytic in $\{\, z\in\overline{\C} \,:\, |z|> 1\,\}$ is a constant orthogonal matrix.
\end{remark}

\begin{definition}[Left-standard factorization]\label{def:ls-fact}
Let $G(z)\in \R(z)^{m\times n}$ and let $\mathrm{rk}(G)=r$. A decomposition of the form $G(z)=A(z)\Delta(z)B(z)$ 
is called a \emph{left-standard factorization} if
\begin{enumerate}
\item $\Delta(z)\in\R(z)^{r \times r}$ is diagonal and analytic with its inverse in $\{\, z\in \mathbb{C}_0 \,:\, |z|\neq1 \, \}$;
\item $A(z)\in\R(z)^{m \times r}$ is analytic together with its left inverse in $\{\, z\in \mathbb{C}_0 \,:\, |z|\leq 1 \, \}$;
\item $B(z)\in\mathbb{R}(z)^{r \times n}$ is analytic together with its right inverse in $\{\, z\in \mathbb{C} \,:\, |z|\geq 1 \, \}$.
\end{enumerate}
\end{definition}
\begin{remark}
If, in Definition \ref{def:ls-fact}, $A(z)$ and $B(z)$ are interchanged, we have a \emph{right-standard factorization}. Hence, it follows that any left-standard factorization of $G(z)$ generates a right-standard factorization of $G^\top(z)$, $G^{-1}(z)$ (if $G(z)$ is nonsingular), $G(z^{-1})$, e.g., in the first case we have $G^\top(z) =B^\top(z) \Delta(z) A^\top(z)$.
\end{remark}

\begin{lemma}\label{lemma3}
Any rational matrix $G(z)\in \R(z)^{m\times n}$ of normal rank $\mathrm{rk}(G)=r$ admits a left-standard factorization. 
\end{lemma}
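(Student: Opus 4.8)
The plan is to reduce everything to the scalar (diagonal) case through the Smith--McMillan form and then split each diagonal entry according to where its zeros and poles sit relative to the unit circle. First I would write $G(z)=C(z)D(z)F(z)$ as in Section~\ref{sec:problem-statement}, where $D(z)=\diag[\varepsilon_1/\psi_1,\dots,\varepsilon_r/\psi_r]\in\R(z)^{r\times r}$ is the (compressed) Smith--McMillan form and $C(z)\in\R[z]^{m\times r}$, $F(z)\in\R[z]^{r\times n}$ are polynomial matrices possessing a polynomial left, respectively right, inverse. The observation that makes the construction work is that all three regions in Definition~\ref{def:ls-fact} exclude both $z=0$ and $z=\infty$. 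Hence any polynomial unimodular matrix, being analytic together with its one-sided polynomial inverse on all of $\C$, automatically satisfies the analyticity requirement imposed on $A(z)$ over $\{0<|z|\le1\}$ and the one imposed on $B(z)$ over $\{|z|\ge1\}$. Consequently $C(z)$ can be absorbed into the left factor and $F(z)$ into the right factor, and the entire burden of the factorization falls on the diagonal matrix $D(z)$.

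Second, I would factor $D(z)=D_A(z)D_\Delta(z)D_B(z)$ entrywise, with $D_A,D_\Delta,D_B$ diagonal. For each scalar entry $d_i(z)=\varepsilon_i(z)/\psi_i(z)$ I would use the valuations $v_\alpha(d_i)$ from Section~\ref{sec:problem-statement} and set $a_i(z):=\prod_{|\alpha|>1}(z-\alpha)^{v_\alpha(d_i)}$, collecting the finite zeros and poles outside the closed unit disk, $b_i(z):=\prod_{0<|\alpha|<1}(z-\alpha)^{v_\alpha(d_i)}$, collecting those strictly inside, and $\delta_i(z):=d_i(z)/\big(a_i(z)b_i(z)\big)$. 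By construction $\delta_i$ has zeros and poles only on the unit circle and, possibly, at $z=0$ and at $z=\infty$, so it is analytic together with its inverse on $\{z\in\C_0:|z|\neq1\}$; moreover $a_i$ is analytic and nonvanishing on $\{0<|z|\le1\}$ and $b_i$ is analytic and nonvanishing on $\{|z|\ge1\}$. Since $G(z)$ is real, conjugate roots share a modulus and are grouped together, so $a_i$, $b_i$ and $\delta_i$ remain real-rational.

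Finally I would set $A(z):=C(z)D_A(z)$, $\Delta(z):=D_\Delta(z)$ and $B(z):=D_B(z)F(z)$, obtaining $G(z)=A(z)\Delta(z)B(z)$ with the correct dimensions $m\times r$, $r\times r$, $r\times n$. Checking the three defining properties is then routine: $\Delta$ is diagonal and, entry by entry, analytic with inverse off the unit circle; $A$ is a product of two matrices each analytic with a left inverse on $\{0<|z|\le1\}$, with $A^{-L}=D_A^{-1}C^{-L}$; and symmetrically $B^{-R}=F^{-R}D_B^{-1}$ is analytic on $\{|z|\ge1\}$. I expect the only delicate point to be the bookkeeping at the two ``neutral'' points $z=0$ and $z=\infty$: one must verify that routing their contributions into the central factor $\Delta$ is consistent with Definition~\ref{def:ls-fact} --- which it is, precisely because $0$ and $\infty$ lie outside all three regions --- and that the compressed unimodular matrices indeed carry the one-sided polynomial inverses needed to push $C$ and $F$ harmlessly into $A$ and $B$. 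Everything else reduces to the scalar valuation computation above.
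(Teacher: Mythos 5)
Your proposal is correct and follows essentially the same route as the paper: reduce to the Smith--McMillan form $G=C D F$, absorb the unimodular factors $C$ and $F$ into $A$ and $B$, and split each diagonal entry of $D$ three ways according to the location of its zeros and poles relative to the unit circle. The only (immaterial) difference is bookkeeping at $z=0$: you route the valuation at the origin into the central factor $\Delta$, while the paper's polynomial splitting may leave it in $D_+$; both are compatible with Definition~\ref{def:ls-fact} since $0$ lies outside the regions where analyticity is required.
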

\begin{proof}
By the Smith-McMillan Theorem, we can write $G(z)=C(z)D(z)F(z)$, where $C(z)\in \mathbb{R}[z]^{m\times r}$, $F(z)\in \mathbb{R}[z]^{r\times n}$ are unimodular polynomial matrices and $D(z)\in \mathbb{R}(z)^{r\times r}$ is diagonal and canonic of the form 
\[
D(z)=\mathrm{diag} \left[\frac{\varepsilon_1(z)}{\psi_1(z)}, \frac{\varepsilon_2(z)}{\psi_2(z)},\dots , \frac{\varepsilon_r(z)}{\psi_r(z)} \right].
\]
We factor $\varepsilon_i(z)\in\R[z]$ and $\psi_i(z)\in\R[z]$, $i=1,\dots,r$, in $D(z)$ into the product of three polynomials: the first without zeroes in $\{\, z\in \C\,:\, |z|\leq 1 \,\}$, the second without zeroes in $\{\, z\in \C\,:\, |z|\neq 1 \,\}$ and the third without zeroes in $\{\, z\in \C\,:\, |z|\geq 1 \,\}$. Thus, it is possible to write
\[
D(z)=D_-(z)\Delta(z)D_+(z),
\] 
where $D_-(z)$ and its inverse are analytic in $\{\, z\in \C\,:\, |z|\leq 1 \,\}$, $\Delta(z)$ and its inverse in $\{\, z\in \C\,:\, |z|\neq 1 \,\}$ and $D_+(z)$ and its inverse in $\{\, z\in \C\,:\, |z|\geq 1 \,\}$. Eventually, by choosing $A(z):=C(z)D_-(z)$ and $B(z):=D_+(z)F(z)$, we have that $G(z)=A(z)\Delta(z)B(z)$ is a left-standard factorization of $G(z)$. 
\end{proof}

Left-standard factorizations are not unique. Indeed, any two decompositions are connected as follows.

\begin{lemma}\label{lemma4}
Let $G(z)\in \R(z)^{m\times n}$ be a rational matrix of normal rank $\mathrm{rk}(G)=r$ and let $G(z)=A(z)\Delta(z)B(z)=A_1(z)\Delta_1(z)B_1(z)$ be two left-standard factorizations of $G(z)$. Then, 
\[
A_1(z)=A(z)M^{-1}(z), \quad B_1(z)=N(z)B(z),
\]
where $M(z)\in\R[z,z^{-1}]^{r\times r}$ and $N(z)\in\R[z,z^{-1}]^{r\times r}$ are two L-unimodular polynomial matrices such that
\begin{align}\label{eq:M(z)Delta(z)N(z)}
M(z)\Delta(z)N^{-1}(z)=\Delta_1(z).
\end{align}
\end{lemma}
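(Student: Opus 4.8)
The plan is first to pin down the purely algebraic relation between the two factorizations, and only afterwards to analyze the regions of analyticity. I would begin by observing that in $G(z)=A(z)\Delta(z)B(z)$ the middle factor $\Delta(z)$ is square and invertible, $A(z)$ has full column rank $r$ and $B(z)$ has full row rank $r$; hence the column space of $G(z)$ over $\R(z)$ coincides with that of $A(z)$, and the row space of $G(z)$ with that of $B(z)$, and the same holds for the second factorization. Thus $A(z)$ and $A_1(z)$ are two full-column-rank generators of the same $r$-dimensional column space, so $A_1(z)=A(z)M^{-1}(z)$ for a uniquely determined invertible $M(z)\in\R(z)^{r\times r}$, and likewise $B_1(z)=N(z)B(z)$ for a uniquely determined invertible $N(z)\in\R(z)^{r\times r}$. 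Using the left inverse of $A(z)$ and the right inverse of $B(z)$, I can write these multipliers explicitly as $M(z)=A_1^{-L}(z)A(z)$ and $N(z)=B_1(z)B^{-R}(z)$ (equivalently $M^{-1}(z)=A^{-L}(z)A_1(z)$ and $N^{-1}(z)=B(z)B_1^{-R}(z)$). Substituting $A_1=AM^{-1}$ and $B_1=NB$ into $A\Delta B=A_1\Delta_1 B_1$ and cancelling $A(z)$ on the left (via $A^{-L}$) and $B(z)$ on the right (via $B^{-R}$) would yield $\Delta(z)=M^{-1}(z)\Delta_1(z)N(z)$, i.e. the desired identity $M(z)\Delta(z)N^{-1}(z)=\Delta_1(z)$.

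It then remains to prove that $M(z)$ and $N(z)$ are L-unimodular polynomial matrices, and here the strategy is to show that each of $M,M^{-1},N,N^{-1}$ is analytic on all of $\mathbb{C}_0$ and then to invoke Lemma \ref{lemma1}. The explicit expressions give one half of each region for free: since $A(z),A^{-L}(z),A_1(z),A_1^{-L}(z)$ are all analytic in $\{z\in\mathbb{C}_0:|z|\leq 1\}$, both $M=A_1^{-L}A$ and $M^{-1}=A^{-L}A_1$ are analytic in $\{z\in\mathbb{C}_0:|z|\leq 1\}$; symmetrically, since $B(z),B^{-R}(z),B_1(z),B_1^{-R}(z)$ are analytic in $\{z\in\C:|z|\geq 1\}$, both $N=B_1B^{-R}$ and $N^{-1}=BB_1^{-R}$ are analytic in $\{z\in\C:|z|\geq 1\}$.

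The crucial step is then to \emph{transfer} analyticity across the unit circle using the identity $M(z)=\Delta_1(z)N(z)\Delta^{-1}(z)$ together with its inverse $N(z)=\Delta_1^{-1}(z)M(z)\Delta(z)$, exploiting that $\Delta,\Delta^{-1},\Delta_1,\Delta_1^{-1}$ are all analytic on $\{z\in\mathbb{C}_0:|z|\neq 1\}$. On the region $\{|z|>1\}$ the factors $N,N^{-1}$ are already analytic, as are $\Delta^{\pm 1},\Delta_1^{\pm 1}$, so $M=\Delta_1N\Delta^{-1}$ and $M^{-1}=\Delta N^{-1}\Delta_1^{-1}$ extend analytically there; combined with the region $\{0<|z|\leq 1\}$ obtained above, this gives $M,M^{-1}$ analytic on all of $\mathbb{C}_0$. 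Dually, on $\{0<|z|<1\}$ the factors $M,M^{-1}$ are already analytic, so $N=\Delta_1^{-1}M\Delta$ and $N^{-1}=\Delta^{-1}M^{-1}\Delta_1$ are analytic there, and together with $\{|z|\geq 1\}$ this gives $N,N^{-1}$ analytic on all of $\mathbb{C}_0$. By Lemma \ref{lemma1}, $M(z)$ and $N(z)$ are L-unimodular polynomial matrices, which completes the argument.

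I expect the main obstacle to be the region-patching in the last paragraph: one must check that the regions coming from the standard-factorization data and those coming from the diagonal identity genuinely overlap and together exhaust $\mathbb{C}_0$, and in particular that the unit circle itself is covered. This is exactly where the asymmetry $|z|\leq 1$ on the $A$-side versus $|z|\geq 1$ on the $B$-side is essential: $M$ inherits the unit circle from the $A$-region while $N$ inherits it from the $B$-region. A secondary point requiring care is the justification of the explicit inverse formulas $M=A_1^{-L}A$ and $N=B_1B^{-R}$ — that is, that the left/right inverses really do recover the change-of-basis matrices — which rests on the column/row space coincidence established in the first paragraph.
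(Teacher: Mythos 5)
Your proposal is correct and follows essentially the same route as the paper: the paper's proof rests on the single identity $\Delta_1^{-1}(z)A_1^{-L}(z)A(z)\Delta(z)=B_1(z)B^{-R}(z)$, whose left side is analytic in $\{\,z\in\C_0:|z|<1\,\}$ and right side in $\{\,z\in\C:|z|\geq 1\,\}$, which is exactly your ``transfer across the unit circle'' step in compressed form, followed by the same appeal to Lemma~\ref{lemma1} for $N(z)=B_1(z)B^{-R}(z)$, its inverse, and symmetrically for $M(z)=A_1^{-L}(z)A(z)$. Your additional care about the change-of-basis formulas and the column/row-space coincidence is sound but is handled implicitly in the paper.
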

\begin{proof}
By assumption, $$G(z)=A(z)\Delta(z)B(z)=A_1(z)\Delta_1(z)B_1(z)$$  which, in turn, implies
\begin{align}\label{eq:lemma4}
\Delta_1^{-1}(z)A_1^{-L}(z)A(z)\Delta(z)=B_1(z)B^{-R}(z).
\end{align}
By Definition \ref{def:ls-fact} of left-standard factorization, the right-hand side of (\ref{eq:lemma4}) is analytic in $\{\,z\in\C\,:\,|z|\geq 1\,\}$, while the left-hand side of (\ref{eq:lemma4}) in $\{\,z\in\C_0\,:\,|z|< 1\,\}$. Therefore, it follows that $B_1(z)B^{-R}(z)$ is analytic in $\C_0$. Moreover, the inverse of $B_1(z)B^{-R}(z)$ satisfies
\[
[B_1(z)B^{-R}(z)]^{-1}=\Delta^{-1}(z)[A_1^{-L}(z)A(z)]^{-1}\Delta_1(z)
\] 
and is also analytic in $\C_0$. Thus, by Lemma \ref{lemma1}, $N(z):=B_1(z)B^{-R}(z)$ must be a L-unimodular matrix. Similarly, $M(z):=A_1^{-L}(z)A(z)$ is a L-unimodular matrix. Finally, a rearrangement of (\ref{eq:lemma4}) yields (\ref{eq:M(z)Delta(z)N(z)}).
\end{proof}
\begin{remark}
Notice that, by replacing the word ``left-standard'' with the word ``right-standard'' in Lemmata \ref{lemma3} and \ref{lemma4}, we obtain, by minor modifications in the proofs, a right-standard counterpart of Lemmata \ref{lemma3} and \ref{lemma4}.
\end{remark}

Let $\Phi(z)\in\R(z)^{n\times n}$ be a para-Hermitian matrix of normal rank $\mathrm{rk}(\Phi)=r$ and let $\Phi(z)=A(z)\Delta(z)B(z)$ be a left-standard factorization of $\Phi(z)$. We have that 
\[
\Phi(z)=\Phi^*(z) =B^*(z) \Delta^*(z) A^*(z)
\] 
is also a left-standard factorization of $\Phi(z)$. In particular, $\Delta^*(z)$ is equal to $\Delta(z)$, except for multiplication of suitable monomials of the form $\pm z^{k_i}$ in its diagonal entries, i.e.,
\[
\Delta^*(z) =\Sigma(z) \Delta(z),
\]
where 
\begin{equation}\label{eq:sigmadelta}
\Sigma(z)=\mathrm{diag}\left[e_1(z),e_2(z),\dots,e_r(z)\right]
\end{equation} 
and $e_i(z)=\pm z^{k_i},\ k_i\in\mathbb{Z},\ i=1,\dots,r$.
By invoking Lemma \ref{lemma4}, we can write
\begin{equation}\label{eq:A(z)B(z)}
A^*(z) = N(z)B(z),  \quad B^*(z) = A(z)M^{-1}(z),
\end{equation}
where $N(z),\, M(z)\in\R[z,z^{-1}]^{r\times r}$ are L-unimodular matrices.

The following two lemmata are used to establish a further characterization of a para-Hermitian matrix when it is positive semi-definite upon the unit circle.

\begin{lemma}\label{lemma5-pre}
Let $G(z)\in\R(z)^{n\times n}$ and let $\mathbb{T}$ be a region of the complex plane such that 
\begin{enumerate}
\item $G(z)$ is Hermitian on $\mathbb{T}$;
\item $\mathbf{x}^\top G(\lambda)\mathbf{x}\geq 0$, $\forall\,\mathbf{x}\in\R^n$ and $\forall\,\lambda\in\tilde{\mathbb{T}}\subseteq \mathbb{T}$ for which $G(\lambda)$ has finite entries.
\end{enumerate}
Let $D(z)\in\R(z)^{r\times r}$ be the Smith-McMillan canonical form of $G(z)$ and denote by $g_{\mathbf{ij}}^{(\ell)}$ and $d_{\mathbf{ij}}^{(\ell)}$ the $\ell\times \ell$ minors ($1\leq \ell\leq r$) of the rational matrices $G(z)$ and $D(z)$, respectively,  obtained by selecting those rows and columns whose indices appear in the ordered $\ell$-tuples $\mathbf{i}$ and $\mathbf{j}$, respectively. Then,
\[
\min_{\mathbf{i}} v_\alpha(d_{\mathbf{ii}}^{(\ell)})=\min_{\mathbf{i}} v_\alpha(g_{\mathbf{ii}}^{(\ell)}), \quad \forall \alpha\in\mathbb{T}.
\]
%where $v_\alpha(\cdot)$ denotes the valuation at $\alpha$ of a rational function.
\end{lemma}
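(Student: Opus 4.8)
The plan is to prove, at each fixed $\alpha\in\mathbb{T}$, the stronger statement that the minimum valuation over the \emph{principal} $\ell\times\ell$ minors coincides with the minimum over \emph{all} $\ell\times\ell$ minors, and then to exploit that the latter quantity is an equivalence invariant shared by $G(z)$ and its Smith--McMillan form $D(z)$. Concretely, set
\[
\mu_\ell(G):=\min_{\mathbf{i}}v_\alpha\!\big(g_{\mathbf{ii}}^{(\ell)}\big),\qquad m_\ell(G):=\min_{\mathbf{i},\mathbf{j}}v_\alpha\!\big(g_{\mathbf{ij}}^{(\ell)}\big),
\]
and similarly $\mu_\ell(D),m_\ell(D)$. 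Since $\mu_\ell=m_\ell$ is immediate for the diagonal matrix $D$, the whole lemma reduces to the chain
\[
\mu_\ell(G)=m_\ell(G)=m_\ell(D)=\mu_\ell(D),
\]
whose three links I would establish in turn by a positivity argument, by Cauchy--Binet invariance, and by the diagonal structure of $D$.

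The last two links are the soft ones. Writing $G(z)=C(z)D(z)F(z)$ as in the proof of Lemma~\ref{lemma3}, with $C(z),F(z)$ polynomial and admitting polynomial one-sided inverses, we also have $D(z)=C^{-L}(z)\,G(z)\,F^{-R}(z)$ with $C^{-L},F^{-R}$ polynomial. Every minor of $C,F,C^{-L},F^{-R}$ is a polynomial, hence has valuation $\geq 0$ at the finite point $\alpha$ (the case $\alpha=\infty$ being reduced to a finite one by the substitution $z\mapsto\lambda^{-1}$). Expanding the $\ell\times\ell$ minors of the triple products $CDF$ and $C^{-L}GF^{-R}$ by the Cauchy--Binet formula, each summand is a product of three minors, one taken from $D$ (resp.\ from $G$) and the other two of nonnegative valuation; taking valuations and minimizing gives $m_\ell(G)\geq m_\ell(D)$ and $m_\ell(D)\geq m_\ell(G)$, so $m_\ell(G)=m_\ell(D)$. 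For the last link, any non-principal $\ell\times\ell$ submatrix of the diagonal $D$ has a zero row, so its minor vanishes identically ($v_\alpha=+\infty$); hence $m_\ell(D)=\mu_\ell(D)$, and by the divisibility chains $\varepsilon_i\mid\varepsilon_{i+1}$, $\psi_{i+1}\mid\psi_i$ this common value equals $\sum_{k=1}^{\ell}v_\alpha(\varepsilon_k/\psi_k)$, the sum of the $\ell$ smallest structural indices at $\alpha$.

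The first equality $\mu_\ell(G)=m_\ell(G)$ is where the positivity hypothesis enters and is the heart of the argument. For each $\lambda\in\tilde{\mathbb{T}}$ the matrix $G(\lambda)$ is Hermitian and positive semi-definite, hence so is its $\ell$-th compound matrix, whose $(\mathbf{i},\mathbf{j})$ entry is precisely $g_{\mathbf{ij}}^{(\ell)}(\lambda)$. Positive semi-definiteness of the compound matrix yields the generalized Cauchy--Schwarz inequality
\[
\big|g_{\mathbf{ij}}^{(\ell)}(\lambda)\big|^2\leq g_{\mathbf{ii}}^{(\ell)}(\lambda)\,g_{\mathbf{jj}}^{(\ell)}(\lambda),\qquad\lambda\in\tilde{\mathbb{T}},
\]
where the two factors on the right are nonnegative, being principal minors of a positive semi-definite matrix. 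Letting $\lambda\to\alpha$ within $\tilde{\mathbb{T}}$ and comparing the orders of vanishing of the two sides (each minor behaves like $c(\lambda-\alpha)^{v_\alpha}$ near $\alpha$) forces
\[
2\,v_\alpha\!\big(g_{\mathbf{ij}}^{(\ell)}\big)\geq v_\alpha\!\big(g_{\mathbf{ii}}^{(\ell)}\big)+v_\alpha\!\big(g_{\mathbf{jj}}^{(\ell)}\big).
\]
Choosing $(\mathbf{i},\mathbf{j})$ to attain $m_\ell(G)$ and bounding the right-hand side below by $2\mu_\ell(G)$ gives $m_\ell(G)\geq\mu_\ell(G)$; the opposite inequality holds trivially because principal minors are among all minors, so $\mu_\ell(G)=m_\ell(G)$.

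I expect the delicate point to be the passage from the pointwise inequality, valid only on $\tilde{\mathbb{T}}$, to a valuation inequality at a point $\alpha\in\mathbb{T}$ that may itself fail to lie in $\tilde{\mathbb{T}}$. The remedy is that $\tilde{\mathbb{T}}$ differs from $\mathbb{T}$ only by the finitely many poles of the entries of $G$, so $\alpha$ is an accumulation point of $\tilde{\mathbb{T}}$ and the one-sided limit $\lambda\to\alpha$ along $\tilde{\mathbb{T}}$ indeed detects the valuations; the convention $v_\alpha\equiv+\infty$ for an identically vanishing minor renders the degenerate cases harmless. Genuine Hermitian positive semi-definiteness on $\tilde{\mathbb{T}}$ (rather than mere nonnegativity of a real quadratic form) is essential here, as it is exactly this property that is inherited by the compound matrix and hence underpins the minor Cauchy--Schwarz inequality on which the whole reduction rests.
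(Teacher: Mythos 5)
Your proof is correct and follows essentially the same route as the paper: both arguments reduce the lemma to the equivalence-invariance of the minimal minor valuation (which the paper cites as an exercise from Kailath and you re-derive via Cauchy--Binet with the polynomial one-sided inverses of $C$ and $F$) plus the key equality $\mu_\ell(G)=m_\ell(G)$, obtained from the minor Cauchy--Schwarz inequality that positive semidefiniteness induces --- in the paper via the factorization $G(\lambda)=W(\lambda)\overline{W(\lambda)}^\top$ and Binet--Cauchy, in yours via positive semidefiniteness of the $\ell$-th compound matrix, which is the same fact --- followed by a comparison of orders of vanishing as $\lambda\to\alpha$ within $\tilde{\mathbb{T}}$. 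The only blemish is your parenthetical claim that the case $\alpha=\infty$ reduces to a finite point via $z\mapsto\lambda^{-1}$ (polynomial unimodularity is not preserved under that substitution, so the invariance $m_\ell(G)=m_\ell(D)$ can genuinely fail at infinity), but this is harmless here because $\mathbb{T}\subseteq\mathbb{C}$ contains only finite points.
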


\begin{proof}
Firstly, we recall that for any rational matrix $G(z)$ it holds 
\[
\min_{\mathbf{i}} v_\alpha(d_{\mathbf{ii}}^{(\ell)})=\min_{\mathbf{ij}} v_\alpha(d_{\mathbf{ij}}^{(\ell)})=\min_{\mathbf{ij}} v_\alpha(g_{\mathbf{ij}}^{(\ell)}),\quad \forall \alpha\in\C. 
\] 
The latter result is well-known and is presented, for instance, as an exercise in \cite[Ch.6, \S 5, Ex.6]{Kailath-1998}. Hence, it remains to prove that 
\begin{equation}\label{eq:lemma-val-g}
\min_{\mathbf{ij}} v_\alpha(g_{\mathbf{ij}}^{(\ell)})=\min_{\mathbf{i}} v_\alpha(g_{\mathbf{ii}}^{(\ell)}),\quad \forall \alpha\in\mathbb{T}.
\end{equation}
Since $G(z)$ is Hermitian positive semi-definite on the region $\tilde{\mathbb{T}}$, it admits a decomposition of the form $G(\lambda)=W(\lambda)\overline{W(\lambda)}^\top$ for all  $\lambda\in\tilde{\mathbb{T}}$. By applying the Binet-Cauchy Theorem (see \cite[Vol.I, Ch.1, \S 2]{Gantmacher-1959}), we have
%\footnote{In deriving expressions (\ref{eq:corollary-minors-1}) and (\ref{eq:corollary-minors-2}), we implicitly use the fact that the determinant of the complex conjugate of a square matrix $A$ is the complex conjugate of the determinant, \emph{i.e.}, $\det(\overline{A})=\overline{\det(A)}$.}
\begin{align}
&g_{\mathbf{ij}}^{(\ell)}(\lambda)=\sum_{\mathbf{h}}w_{\mathbf{ih}}^{(\ell)}(\lambda)\overline{w_{\mathbf{jh}}^{(\ell)}(\lambda)}, \quad \forall\,\lambda\in\tilde{\mathbb{T}}, \label{eq:corollary-minors-1}\\
&g_{\mathbf{ii}}^{(\ell)}(\lambda)=\sum_{\mathbf{h}}w_{\mathbf{ih}}^{(\ell)}(\lambda)\overline{w_{\mathbf{ih}}^{(\ell)}(\lambda)}=\sum_{\mathbf{h}}\left|w_{\mathbf{ih}}^{(\ell)}(\lambda)\right|^2, \quad \forall\,\lambda\in\tilde{\mathbb{T}},\label{eq:corollary-minors-2}
\end{align}
where $g_{\mathbf{ij}}^{(\ell)}(\lambda)$ and $w_{\mathbf{ij}}^{(\ell)}(\lambda)$ denote the $\ell\times \ell$ minors of matrices $G(\lambda)$ and $W(\lambda)$,  obtained by selecting those rows and columns whose indices appear in the ordered $\ell$-tuples $\mathbf{i}$ and $\mathbf{j}$, respectively. Moreover, in both the summations (\ref{eq:corollary-minors-1})-(\ref{eq:corollary-minors-2}), $\mathbf{h}:=(h_1,\dots,h_\ell)$, $1\leq h_1<\cdots<h_\ell\leq n$, runs through all such multi-indices. By using Cauchy-Schwarz inequality and (\ref{eq:corollary-minors-1})-(\ref{eq:corollary-minors-2}), we have
\begin{align}
\left|g_{\mathbf{ij}}^{(\ell)}(\lambda)\right|&=\left|\sum_{\mathbf{h}}w_{\mathbf{ih}}^{(\ell)}(\lambda)\overline{w_{\mathbf{jh}}^{(\ell)}(\lambda)}\right|\nonumber \\
&\leq \sqrt{\sum_{\mathbf{h}}\left|w_{\mathbf{ih}}^{(\ell)}(\lambda)\right|^2\sum_{\mathbf{h}}\left|w_{\mathbf{jh}}^{(\ell)}(\lambda)\right|^2}\nonumber \\
&= \sqrt{g_{\mathbf{ii}}^{(\ell)}(\lambda)g_{\mathbf{jj}}^{(\ell)}(\lambda)} \nonumber \\
&\leq  \max\left\{g_{\mathbf{ii}}^{(\ell)}(\lambda), g_{\mathbf{jj}}^{(\ell)}(\lambda) \right\}, \ \ \ \forall\,\lambda\in\tilde{\mathbb{T}}. \label{eq:inequality-cs}
\end{align}
The latter inequality implies that for every zero $\alpha\in\mathbb{T}$ of multiplicity $k$ of a minor of $G(z)$, there exists at least one principal minor of $G(z)$ which has the same $\alpha$ either as a zero  of multiplicity less than or equal to $k$ or a pole  of multiplicity greater than or equal to $0$. Similarly, inequality (\ref{eq:inequality-cs}) implies also that for every pole $\alpha\in\mathbb{T}$ of multiplicity $k$ of a minor of $G(z)$, there exists at least one principal minor of $G(z)$ which has the same pole of multiplicity greater than or equal to $k$. Therefore, we conclude that (\ref{eq:lemma-val-g}) holds.
\end{proof}

\begin{lemma}\label{lemma5}
Let $\Phi(z)\in\R(z)^{n\times n}$ be a spectrum of normal rank $\mathrm{rk}(\Phi)=r$ and let  $D(z)\in\R(z)^{r\times r}$ be its Smith-McMillan canonical form. Then, the zeroes and poles on the unit circle of the diagonal elements of $D(z)$ have even multiplicity.
\end{lemma}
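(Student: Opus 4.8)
The plan is to fix an arbitrary point $\alpha$ on the unit circle and to read off the zero/pole multiplicities of the diagonal entries of $D(z)$ from the valuations $\delta_i := v_\alpha(\varepsilon_i/\psi_i) = v_\alpha(\varepsilon_i) - v_\alpha(\psi_i)$, which are precisely the structural indices of $\Phi(z)$ at $\alpha$. Since $\varepsilon_i(z)$ and $\psi_i(z)$ are coprime, at each $\alpha$ at most one of $v_\alpha(\varepsilon_i)$, $v_\alpha(\psi_i)$ is nonzero, so $|\delta_i|$ is exactly the multiplicity of $\alpha$ as a zero (if $\delta_i>0$) or a pole (if $\delta_i<0$) of the $i$-th diagonal element; thus it suffices to prove that every $\delta_i$ is even. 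The divisibility relations $\varepsilon_i\mid\varepsilon_{i+1}$ and $\psi_{i+1}\mid\psi_i$ of the Smith-McMillan form give $v_\alpha(\varepsilon_1)\le\cdots\le v_\alpha(\varepsilon_r)$ and $v_\alpha(\psi_1)\ge\cdots\ge v_\alpha(\psi_r)$, whence $\delta_1\le\delta_2\le\cdots\le\delta_r$. Because $D(z)$ is diagonal, its principal $\ell\times\ell$ minors are the products of $\ell$ distinct diagonal entries, so $\min_{\mathbf i} v_\alpha(d^{(\ell)}_{\mathbf{ii}})=\sum_{k=1}^\ell\delta_k=:S_\ell$ (the $\ell$ smallest indices being $\delta_1,\dots,\delta_\ell$). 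Setting $S_0:=0$ we have $\delta_\ell=S_\ell-S_{\ell-1}$, so it is enough to show that $S_\ell$ is even for every $\ell$.

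To compute $S_\ell$ I would invoke Lemma \ref{lemma5-pre} with $G(z)=\Phi(z)$ and $\mathbb{T}=\tilde{\mathbb{T}}$ equal to the unit circle: $\Phi(z)$ is para-Hermitian, hence Hermitian on the unit circle, and $\Phi(e^{j\omega})$ being positive semi-definite gives $\mathbf{x}^\top\Phi(e^{j\omega})\mathbf{x}=\mathbf{x}^*\Phi(e^{j\omega})\mathbf{x}\ge 0$ for all real $\mathbf{x}$ wherever $\Phi$ is finite. The lemma then yields $S_\ell=\min_{\mathbf i} v_\alpha(d^{(\ell)}_{\mathbf{ii}})=\min_{\mathbf i} v_\alpha(g^{(\ell)}_{\mathbf{ii}})$, transferring the problem to the principal minors of $\Phi(z)$ itself. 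The decisive observation is that each principal minor $g^{(\ell)}_{\mathbf{ii}}(z)$ is the determinant of the principal submatrix $\Phi_{\mathbf{ii}}(z)$, and on the unit circle (away from the finitely many poles of $\Phi$) this submatrix is Hermitian and positive semi-definite; therefore $g^{(\ell)}_{\mathbf{ii}}(e^{j\omega})$ is real and nonnegative for all but finitely many $\omega$.

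The core step, and the place where positivity (not merely the para-Hermitian symmetry) is indispensable, is the claim that a real rational function $f(z)$ that is nonnegative on the unit circle except at finitely many points has a zero or pole of even multiplicity at every $\alpha=e^{j\omega_0}$ on the circle. I would prove this by a local sign argument: writing $f(z)=(z-\alpha)^m h(z)$ with $m=v_\alpha(f)$ and $h(\alpha)\neq 0,\infty$, and using $e^{j\omega}-e^{j\omega_0}\sim j e^{j\omega_0}(\omega-\omega_0)$, one gets $f(e^{j\omega})\sim C\,(\omega-\omega_0)^m$ with $C\neq 0$; since $f$ is real and keeps a constant, nonnegative sign on both arcs abutting $\omega_0$, the factor $(\omega-\omega_0)^m$ cannot change sign, forcing $m$ to be even (an odd $m$ would make $f$ negative on one side, whether $m>0$ for a zero or $m<0$ for a pole). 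Applying this to every $g^{(\ell)}_{\mathbf{ii}}$ shows that each $v_\alpha(g^{(\ell)}_{\mathbf{ii}})$ is even, hence so is their minimum $S_\ell$, and consequently every $\delta_\ell=S_\ell-S_{\ell-1}$ is even. Since $\alpha$ was an arbitrary point of the unit circle, all zeroes and poles of the diagonal entries of $D(z)$ on the unit circle have even multiplicity. The main obstacle is precisely this even-multiplicity claim, where one must treat zeros and poles uniformly and justify that $f$ retains its sign across the finitely many excluded points by continuity of the rational function.
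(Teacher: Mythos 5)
Your proposal is correct and follows essentially the same route as the paper: invoke Lemma \ref{lemma5-pre} with $\mathbb{T}$ the unit circle to reduce the claim to the principal minors of $\Phi(z)$, use positive semi-definiteness to conclude those minors have even-order zeroes/poles on the circle, and recover the structural indices $\delta_\ell$ iteratively from the partial sums $S_\ell$ (the paper does this as $\nu^{(1)}$ even, then $\nu^{(1)}+\nu^{(2)}$ even, etc.). The only difference is that you spell out, via the local expansion $f(e^{j\omega})\sim C(\omega-\omega_0)^m$, the even-multiplicity claim that the paper dismisses with ``one can directly verify,'' which is a welcome addition rather than a deviation.
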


\begin{proof}
Firstly, we assume that the numerators and denominators of all entries in $\Phi(z)$ are relatively prime polynomials. Let $
\alpha_1=e^{j\omega_1},\, \alpha_2=e^{j\omega_2},\dots,\, \alpha_t=e^{j\omega_t},
$ 
be the zeroes/poles on the unit circle of $\Phi(z)$  and let 
$
\nu_i^{(1)},\, \nu_i^{(2)},\dots,\, \nu_i^{(r)}, \ (\nu_i^{(1)}\leq \nu_i^{(2)}\leq \dots\leq\nu_i^{(r)}),
$
be the structural indices of $\Phi(z)$ at $\alpha_i, \ i=1,\dots,t$. Since $\Phi(z)$ is positive semi-definite on the unit circle, one can directly verify that the zeroes and poles on the unit circle of the principal minors of $\Phi(z)$ must have even multiplicity.
 Now, by setting $\mathbb{T}:=\{\,z\in\C\,:\,|z|=1 \,\}$, we can apply Lemma \ref{lemma5-pre}. By considering the minors of order $\ell=1$, it follows that $\nu_i^{(1)}$ is even for all $i=1,2,\dots,t$. Similarly, by considering the minors of order $\ell=2$ in Lemma \ref{lemma5-pre}, it follows that  $ \nu_i^{(1)}+\nu_i^{(2)}$ is even for all $i=1,2,\dots,t$. Since $\nu_i^{(1)}$ is even, then also $\nu_i^{(2)}$ must be even for all $i=1,2,\dots,t$. By iterating the argument, we conclude that every zero/pole on the unit circle of the diagonal elements of $D(z)$ has even multiplicity. 
\end{proof}

\begin{remark}
Lemma \ref{lemma5-pre} can also be used to obtain an alternative proof of \cite[Lemma 4, point 2]{Youla-1961}, which represents the continuous-time counterpart of Lemma \ref{lemma5}.
\end{remark}

\begin{lemma}\label{lemma-review}
Let $\Phi(z)\in\R(z)^{n\times n}$ be a spectrum of normal rank $\mathrm{rk}(\Phi)=r$ and let $D(z)\in\R(z)^{r\times r}$ be its Smith-McMillan canonical form. Then $D(z)$ can be written as
\begin{equation}\label{eq:D(z)-decomposition}
D(z) = \Sigma(z)\Lambda^*(z)\Theta^*(z) \Theta(z)\Lambda(z)
\end{equation}
where $\Lambda(z)$ is diagonal, canonic and analytic with its inverse in $\{\,z\in\C\,:\,|z|\geq 1\,\}$ and, if $z=0$ is either a zero, pole or both of $D(z)$, $\Lambda(z)$ has the same structural indices at $z=0$ of $D(z)$; $\Theta(z)$ is diagonal, canonic and analytic with its inverse in $\{\, z\in\C\,:\, |z|\neq 1\,\}$; $\Sigma(z)$ has the form
\begin{equation}\label{eq:sigmadelta-2}
\Sigma(z)=\mathrm{diag}\left[e_1(z),e_2(z),\dots,e_r(z)\right],
\end{equation} 
with $e_i(z)=\alpha_{i} z^{k_i},\, \alpha_{i}\in \R_0,\, k_i\in\mathbb{Z},\, i=1,\dots,r$.
\end{lemma}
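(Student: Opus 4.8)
The plan is to reduce the claim to $r$ scalar identities and build the factors entry by entry. Since $D(z)=\diag[d_1(z),\dots,d_r(z)]$ with $d_k(z)=\varepsilon_k(z)/\psi_k(z)$, and since for a diagonal $G(z)=\diag[g_1,\dots,g_r]$ one has $G^*(z)=\diag[g_1(z^{-1}),\dots,g_r(z^{-1})]$, the asserted factorization $D=\Sigma\Lambda^*\Theta^*\Theta\Lambda$ holds if and only if
\[
d_k(z)=e_k(z)\,\lambda_k(z^{-1})\,\theta_k(z^{-1})\,\theta_k(z)\,\lambda_k(z),\qquad k=1,\dots,r,
\]
where $\lambda_k,\theta_k,e_k$ are the $k$-th diagonal entries of $\Lambda,\Theta,\Sigma$. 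First I would group the zeroes and poles of each $d_k$ according to the regions $\{0<|z|<1\}$, $\{|z|=1\}$, $\{|z|>1\}$, together with the behaviour at $z=0$ and $z=\infty$, writing $d_k(z)=c\,z^{m_k}p^{\mathrm{in}}_k(z)p^{\mathrm{uc}}_k(z)p^{\mathrm{out}}_k(z)$ with $m_k=v_0(d_k)$ and $p^{\mathrm{in}}_k,p^{\mathrm{uc}}_k,p^{\mathrm{out}}_k$ collecting the roots strictly inside, on, and strictly outside the unit circle. The factor $\Lambda$ is designed to carry $z^{m_k}p^{\mathrm{in}}_k$ (so its zeroes and poles lie in $\{|z|<1\}$, giving analyticity of $\Lambda$ and $\Lambda^{-1}$ in $\{|z|\ge1\}$ and the prescribed structural indices at $z=0$), $\Theta$ carries ``half'' of $p^{\mathrm{uc}}_k$, and $\Sigma$ absorbs the residual monomials and constants.

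The crux, and the main obstacle, is to show that $\Lambda^*$ automatically reproduces $p^{\mathrm{out}}_k$, i.e. that the outside zero/pole structure of $d_k$ is the mirror image under $z\mapsto z^{-1}$ of its inside structure. This is where the para-Hermitian hypothesis enters. Structural indices at a finite nonzero point are invariant under transposition and under polynomial unimodular equivalence, and the substitution $z\mapsto z^{-1}$ sends the indices at $\alpha$ to those at $1/\alpha$; hence the identity $\Phi=\Phi^*=\Phi^\top(z^{-1})$ forces the ordered structural indices of $\Phi$, and therefore of $D$, at $\alpha$ and at $1/\alpha$ to coincide for every finite nonzero $\alpha$. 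Because the divisibility conditions of the Smith-McMillan form order the indices consistently across all points ($v_\alpha(d_k)\le v_\alpha(d_{k+1})$), this upgrades to the entrywise symmetry $v_\alpha(d_k)=v_{1/\alpha}(d_k)$ for each $k$. Consequently, setting $\lambda_k(z):=z^{m_k}p^{\mathrm{in}}_k(z)$ (rendered canonic by moving leading constants into $e_k$), a direct computation shows that $\lambda_k(z^{-1})$ equals $p^{\mathrm{out}}_k(z)$ up to a real constant and a power of $z$, precisely because the outside roots $1/\beta$ of $d_k$ match the inside roots $\beta$ with equal multiplicities.

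For the unit-circle factor I would invoke Lemma \ref{lemma5}: every root of $p^{\mathrm{uc}}_k$ has even multiplicity. Writing $p^{\mathrm{uc}}_k(z)=\prod_{|\beta|=1}(z-\beta)^{\mu_\beta}$ with each $\mu_\beta$ even, and pairing each $\beta$ with its conjugate $\bar\beta=1/\beta$ so that the half-multiplicity product stays real, I set $\theta_k(z):=\prod_{|\beta|=1}(z-\beta)^{\mu_\beta/2}$; a short computation using $|\beta|=1$ then gives $\theta_k(z^{-1})\theta_k(z)=\pm z^{-s}\,p^{\mathrm{uc}}_k(z)$ for a suitable integer $s$, so the circle part is reproduced up to a monomial that again goes into $\Sigma$. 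It remains to assemble everything: $\Lambda$ and $\Theta$ are canonic because restricting to a region and halving multiplicities both preserve the divisibility relations inherited from $D$, their analyticity properties hold by construction, and the residual quotient $e_k:=d_k/(\lambda_k^*\theta_k^*\theta_k\lambda_k)$ is, by the two computations above, a monomial $\alpha_k z^{k_k}$ with $\alpha_k\in\R_0$ and $k_k\in\Z$, which is exactly the required form of $\Sigma$. The only genuinely delicate point is the symmetry argument of the second paragraph; the remainder is bookkeeping of the degrees at $0$ and $\infty$, whose discrepancies are harmlessly collected in $\Sigma$.
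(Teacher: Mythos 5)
Your proposal is correct and follows essentially the same route as the paper: the reciprocal symmetry $v_\alpha(d_k)=v_{1/\alpha}(d_k)$ on $\C_0$ is exactly what the paper extracts from $\Phi=\Phi^*$ via the L-unimodularity of $C^*(z)$, $F^*(z)$ and the minors-valuation argument (yielding $D^*(z)=\Sigma''(z)D(z)$), and your entrywise assignment of the $z$-power and inside roots to $\Lambda$, the halved unit-circle multiplicities (Lemma~\ref{lemma5}) to $\Theta$, and the residual monomials to $\Sigma$ matches the paper's construction step for step.
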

\begin{proof}
By direct computation, we obtain
\begin{equation}\label{eq:sigmabarD}
D^*(z)=\Sigma'(z)\bar{D}(z),
\end{equation}
where $\bar{D}(z)$ is canonic and $\Sigma'(z)$ is a diagonal matrix with elements $\alpha z^k$, $\alpha\in \R_0$, $k\in\mathbb{Z}$, on its diagonal.
Since $\Phi(z)$ is a spectrum, we can write
\[\Phi(z)=C(z)D(z)F(z)=F^*(z) D^*(z) C^*(z) =\Phi^*(z),\]
The matrices $F(z)\in\R[z]^{n\times r}$, $C(z)\in\R[z]^{r\times n}$, are unimodular, while  $F^*(z)$, $C^*(z)$ are L-unimodular. By Lemma \ref{lemma1},  $F(z)$, $C(z)$, $F^*(z)$, $C^*(z)$ are analytic in $\C_0$ with their inverses. Thus, we have (see \cite[Ch.6, \S5, Ex.6]{Kailath-1998}) 
\[
\min_{\mathbf{i}} v_\alpha(d_{\mathbf{ii}}^{(\ell)})=\min_{\mathbf{i}} v_\alpha({d^{*(\ell)}_{\mathbf{ii}}}),\ \ \forall \alpha\in\C_0,\ \forall \ell : 1\leq \ell\leq r,
\] 
where $d_{\mathbf{ii}}^{(\ell)}$ and ${d^{*(\ell)}_{\mathbf{ii}}}$ denote the $\ell\times \ell$ minors of $D(z)$ and $D^*(z)$, respectively, obtained by selecting those rows and columns whose indices appear in the ordered $\ell$-tuple $\mathbf{i}$. The previous equation implies that, for every $\alpha\in \C_0$, being either a pole, zero or both of $D(z)$, $D^*(z)$ has the same structural indices at $\alpha$ of $D(z)$. Therefore, since by (\ref{eq:sigmabarD}) $\bar{D}(z)$ is canonic, it follows that
\[
D^*(z)=\Sigma''(z)  D(z)
\]
where $\Sigma''(z)$  is diagonal with elements $\alpha z^k$, $\alpha\in \R_0$, $k\in\mathbb{Z}$, on its diagonal.
This means that any zero/pole at $\alpha\in\C_0$ in the diagonal terms of $D(z)$ is accompanied by a zero/pole at $\alpha^{-1}$, and we can always write $D(z)$ as
\begin{align}\label{eq:Dzdec}
D(z)=\Sigma_1(z)\Lambda^*(z) \Delta(z)\Lambda(z),
\end{align}
where $\Sigma_1(z)$ is diagonal with elements $ \alpha z^{k}$, $\alpha\in\R_0$, $k\in\mathbb{Z}$, on its diagonal; $\Lambda(z)$ and $\Delta(z)$ are diagonal, canonic and analytic with their inverse in $\{\,z\in\C\,:\,|z|\geq 1\,\}$ and $\{\, z\in\C\,:\, |z|\neq 1\,\}$, respectively. Moreover, if $z=0$ is either a pole, zero or both of $D(z)$, $\Lambda(z)$ possesses the same structural indices at $z=0$ of $D(z)$. As a matter of fact, let $\alpha_{i,k}$, $i=1,\dots,p_k$, and $\beta_{j,k}$, $j=1,\dots,q_k$, be the zeroes and poles, respectively, in $\{\,z\in\C_0\,:\,|z|< 1\,\}$ of $[D(z)]_{kk}$ and let $h_k\in \mathbb{Z}$ be the valuation at $z=0$ of $[D(z)]_{kk}$. We can write, for all $k=1,\dots,r$,
\begin{align*}
&[D(z)]_{kk}=z^{h_k}\frac{\prod_{i=1}^{p_k}(z-\alpha_{i,k}^{-1})(z-\alpha_{i,k})}{\prod_{j=1}^{q_k}(z-\beta_{j,k}^{-1})(z-\beta_{j,k})}[\Delta(z)]_{kk}\\
&= \underbrace{\gamma_k \frac{z^{h_k}}{z^{q_k-p_k}}}_{[\Sigma_1(z)]_{kk}} \underbrace{z^{-h_k}	\frac{\prod_{i=1}^{p_k}(z^{-1}-\alpha_{i,k})}{\prod_{j=1}^{q_k}(z^{-1}-\beta_{j,k})}}_{[\Lambda^*(z)]_{kk}}[\Delta(z)]_{kk}\,\cdot \\
&\hspace{5cm} \cdot \underbrace{z^{h_k}	\frac{\prod_{i=1}^{p_k}(z-\alpha_{i,k})}{\prod_{j=1}^{q_k}(z-\beta_{j,k})}}_{[\Lambda(z)]_{kk}} 
\end{align*}
with $\gamma_k:=(-1)^{q_k-p_k}\frac{\prod_{j=1}^{q_k}\beta_{j,k}}{\prod_{i=1}^{p_k}\alpha_{i,k}}$. 

Now, by exploiting Lemma \ref{lemma5}, $\Delta(z)$ can be written as
\[
\Delta(z)=\Theta^2(z)=\Sigma_2(z)\Theta^*(z) \Theta(z),
\]
with $\Sigma_1(z)$ diagonal with elements $ \pm z^{k}$,  $k\in\mathbb{Z}$, on its diagonal and $\Theta(z)$ diagonal, canonic and analytic together with its inverse in $\{\,z\in\C\,:\,|z|\neq 1\,\}$. Finally, we can rearrange $D(z)$ in the form
\[
D(z)=\Sigma(z)\Lambda^*(z)\Theta^*(z) \Theta(z)\Lambda(z),
\]
where $\Sigma(z):=\Sigma_1(z)\Sigma_2(z)$ has the form in (\ref{eq:sigmadelta-2}).
\end{proof}

To conclude this section, we report below another useful result.

\begin{lemma}\label{lemma7}
Let $\Psi(z)\in\R[z,z^{-1}]^{r\times r}$ be a  para-Hermitian L-unimodular matrix which is positive definite on the unit circle. Then,  $\Psi^{\rm hc}$ is nonsingular if and only if $\Psi(z)$ is a constant matrix.
\end{lemma}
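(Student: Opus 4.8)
The plan is to prove the two implications separately, with essentially all the work in the direction ``$\Psi^{\rm hc}$ nonsingular $\Rightarrow \Psi(z)$ constant''. The reverse direction is immediate: if $\Psi(z)=\Psi_0$ is constant, then every column has maximum-degree $0$, so $\Psi^{\rm hc}=\Psi_0$; since $\Psi(z)$ is para-Hermitian and constant we have $\Psi_0=\Psi_0^\top$, and positive definiteness on the unit circle makes $\Psi_0$ a symmetric positive definite, hence nonsingular, matrix. So $\Psi^{\rm hc}=\Psi_0$ is nonsingular.

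For the forward direction the crucial preliminary observation is that $\det\Psi(z)$ is a nonzero real constant. Indeed, L-unimodularity gives $\det\Psi(z)=\alpha z^{k}$ for some $\alpha\in\R_0$ and $k\in\Z$, while the para-Hermitian identity yields $\det\Psi(z)=\det\Psi^\top(z^{-1})=\det\Psi(z^{-1})=\alpha z^{-k}$; comparing the two forces $k=0$. Now I would exploit the hypothesis on $\Psi^{\rm hc}$. Writing $K_i$ for the maximum-degree of the $i$-th column of $\Psi(z)$ (well defined, since no column can vanish, each containing a diagonal entry that is strictly positive on the unit circle), the Leibniz expansion of the determinant shows that the coefficient of $z^{\sum_i K_i}$ in $\det\Psi(z)$ is exactly $\det\Psi^{\rm hc}$. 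As $\Psi^{\rm hc}$ is nonsingular this coefficient is nonzero, so $\maxdeg\det\Psi=\sum_i K_i$, and combined with $\det\Psi$ being constant this gives $\sum_i K_i=0$.

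Finally I would show each $K_i=0$ and conclude. Denoting by $p_{ij}(z):=[\Psi(z)]_{ij}$ the entries, the diagonal entry $p_{ii}(z)$ satisfies $p_{ii}(z)=p_{ii}(z^{-1})$ (it is a diagonal entry of a para-Hermitian matrix) and is not identically zero (it is positive on the unit circle); such a symmetric nonzero Laurent polynomial has $\maxdeg p_{ii}\geq 0$, since its maximum- and minimum-degrees are opposite. Because $p_{ii}$ lies in the $i$-th column, $K_i\geq\maxdeg p_{ii}\geq 0$. Together with $\sum_i K_i=0$ this forces $K_i=0$ for every $i$, so every entry has only nonpositive powers of $z$. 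The para-Hermitian relation $p_{ij}(z)=p_{ji}(z^{-1})$ then gives $\mindeg p_{ij}=-\maxdeg p_{ji}\geq 0$ for all $i,j$, so every entry has only nonnegative powers as well; hence every entry is constant and $\Psi(z)$ is a constant matrix.

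The only genuinely delicate point is the identity relating the leading coefficient of $\det\Psi(z)$ to $\det\Psi^{\rm hc}$ (the ``column-properness'' fact); the rest is bookkeeping with Laurent degrees. The real engine of the argument is the pair of observations that $\det\Psi(z)$ is constant and that the column maximum-degrees are bounded below by the necessarily nonnegative degrees of the symmetric diagonal entries, which together pin all column degrees to zero.
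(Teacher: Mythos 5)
Your proof is correct and takes essentially the same route as the paper's: both establish that $\det\Psi(z)$ is a nonzero real constant, use positivity of the symmetric diagonal entries to get each column maximum-degree $K_i\geq 0$, apply the Leibniz expansion of the determinant together with nonsingularity of $\Psi^{\rm hc}$ to force $K_i=0$ for all $i$, and then use the para-Hermitian symmetry (your entry-wise relation $p_{ij}(z)=p_{ji}(z^{-1})$ is exactly the paper's $k_i=-K_i$) to pin the minimum-degrees to zero, so every entry is constant. The only differences are cosmetic: you argue directly via the identity $\maxdeg\det\Psi=\sum_i K_i$ when $\det\Psi^{\rm hc}\neq 0$, and you spell out details the paper asserts without proof (e.g., that $\det\Psi(z)=\alpha z^{k}$ forces $k=0$), whereas the paper phrases the degree step as a contradiction.
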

\begin{proof}
If $\Psi(z)$ is a constant matrix then $\Psi^{\rm hc}=\Psi(z)$ is nonsingular, by definition of L-unimodular matrix.

Conversely, assume that $\Psi^{\rm hc}$ is nonsingular. Let us denote by $K_i\in\mathbb{Z}$, $i=1,\dots,r$, the maximum-degree of the $i$-th column of $\Psi(z)$ and by $k_i\in\mathbb{Z}$, $i=1,\dots,r$, the minimum-degree of the $i$-th row of $\Psi(z)$.
 Since $\Psi(z)=\Psi^*(z)$, we have that $\det \Psi(z)$ is a nonzero real constant and
\begin{equation}\label{eq:Ki-ki}
K_i=-k_i, \quad i=1,\dots,r.
\end{equation}
Moreover, since $\Psi(z)$ is positive definite on the unit circle, the diagonal elements of $\Psi(z)$ cannot be equal to zero and, therefore, $K_i\geq 0$, $i=1,\dots,r$.
Actually, the nonsingularity of $\Psi^{\rm hc}$ yields
\begin{equation}\label{eq:Kizero}
K_i=0,\quad  i=1,\dots,r,
\end{equation} 
otherwise one can check, by exploiting the Leibniz formula for determinants, that the maximum-degree of $\det \Psi(z)$ would be strictly positive; but this is not possible since, as noticed above, $\det \Psi(z)$ is a nonzero real constant and so $\maxdeg (\det \Psi(z)) = 0$.
By (\ref{eq:Kizero}), all the entries of $\Psi(z)$ must have maximum-degree less than or equal to zero. 
But, by (\ref{eq:Ki-ki}), $k_i=-K_i$ for all $i=1,\dots,r$, and so (\ref{eq:Kizero}) also implies that all the entries of $\Psi(z)$ must have minimum-degree greater than or equal to zero. We conclude that 
\[
\maxdeg [\Psi(z)]_{ij}=\mindeg [\Psi(z)]_{ij}=0, \quad i,\, j=1,\dots,r,
\]
and, therefore, $\Psi(z)$ must be a constant matrix.
\end{proof}

\section{Proof of the main theorem}\label{sec:main-theorem}

We are now ready to prove our main result.
For the sake of clarity and readability, we first prove the special case of 
Theorem \ref{thmsf-dt} and we then proceed to the proof of our general Theorem \ref{thmsf-dt-g}.

{\bf \em Proof of Theorem \ref{thmsf-dt}:}
We first prove statement \ref{item:thmsf-dt(iii)}). Let $W(z)$ and $W_1(z)$ be two matrices satisfying \ref{item:thmsf-dt(i)}) and \ref{item:thmsf-dt(ii)}). Then,
\begin{align}\label{eq:thmsf-dt-1}
W^*(z) W(z)=W_1^*(z) W_1(z).
\end{align}  
The latter equation implies $V^*(z) V(z)=I_r$, where $V(z):=W_1(z)W^{-R}(z)$ is analytic in $\{\,z\in\overline{\C}\,:\,|z|>1 \,\}$. Thus,  $V(z)\in\R(z)^{r\times r}$ is a para-unitary matrix  analytic in $\{\, z\in\overline{\C} \,:\, |z|> 1\,\}$. Moreover,
we have that $\Delta_1(z):=W_1(z)-V(z)W(z)=W_1(z)[I_n-W^{-R}(z)W(z)]$
satisfies
\begin{align}\label{eq-unicita}
&\Delta_1^*(z)\Delta_1(z) =\nonumber \\
& =  [I_n-W^*(z)W^{-R*}(z)]W_1^*(z)W_1(z)[I_n-W^{-R}(z)W(z)]\nonumber \\
& =  [I_n-W^*(z)W^{-R*}(z)]W^*(z)W(z)[I_n-W^{-R}(z)W(z)]\nonumber \\
& =   0,
\end{align}
so that 
\begin{equation}\label{w1=vw}
W_1(z)=V(z)W(z)
\end{equation}
yielding that
$
V^{-1}(z)=W(z)W_1^{-R}(z)$ is analytic in $\{\, z\in\overline{\C} \,:\, |z|> 1\,\}$. In view of Lemma \ref{lemma2}, we conclude that $V(z)$ is a constant orthogonal matrix.

Consider now statement \ref{item:thmsf-dt(iv)}) and let $\Phi(z)=L^*(z) L(z)$ where $L(z)\in\R(z)^{n\times r}$ is analytic in $\{\,z\in\overline{\C}\,:\,|z|>1 \,\}$. In this case, we can write
\[
L^*(z) L(z)=W^*(z) W(z).
\]
The latter equation implies $V^*(z) V(z)=I_r$,
where $V(z):=L(z)W^{-R}(z)$ and $W(z)\in\R(z)^{r\times n}$ is a rational matrix satisfying \ref{item:thmsf-dt(i)}) and \ref{item:thmsf-dt(ii)}). Since $L(z)$ and $W^{-R}(z)$ are both analytic in $\{\,z\in\overline{\C}\,:\,|z|>1 \,\}$, then $V(z)\in\R(z)^{r\times r}$ is a para-unitary matrix analytic in $\{\, z\in\overline{\C} \,:\, |z|> 1\,\}$.
The same computation that led to (\ref{w1=vw}) now gives $L(z)=V(z)W(z)$.

Now, we provide a constructive proof of statements \ref{item:thmsf-dt(i)}) and \ref{item:thmsf-dt(ii)}), which represent the core of the Theorem. The procedure is divided in four steps.

\emph{Step 1.} Reduce $\Phi(z)$ to the Smith-McMillan canonical form. By using the same standard procedure described in \cite[Thm.2]{Youla-1961}, we arrive at 
\begin{align}
\Phi(z)=C(z)D(z)F(z),
\end{align}
where $C(z)\in\mathbb{R}[z]^{n\times r}$, $F(z)\in\mathbb{R}[z]^{r\times n}$ are unimodular polynomial matrices and $D(z)\in\mathbb{R}(z)^{r\times r}$ is diagonal and canonic.

\emph{Step 2.} According to Lemma \ref{lemma-review}, we can write $D(z)$ in the form
\begin{equation}\label{eq:smith-mcmillan-phi}
D(z)=\Sigma(z)\Lambda^*(z)\tilde{\Delta}(z)\Lambda(z),
\end{equation}
where:
\begin{enumerate}
\item $\Lambda(z)\in\mathbb{R}(z)^{r\times r}$ is diagonal, canonic and analytic together with $\Lambda^{-1}(z)$ in $\{\,z\in\C\,:\,|z|\geq 1 \,\}$ and, if $z=0$ is either a pole, zero or both of $D(z)$, $\Lambda(z)$ possesses the same  structural indices at $z=0$ of $D(z)$;
\item $\tilde{\Delta}(z):=\Theta^*(z) \Theta(z)=\tilde{\Delta}^*(z)$, where $\Theta(z)\in\R(z)^{r\times r}$ is diagonal, canonic and analytic together with $\Theta^{-1}(z)$ in $\{\,z\in\C\,:\,|z|\neq 1 \,\}$;
\item $\Sigma(z)\in\R(z)^{r\times r}$ is diagonal of the form 
\[
\Sigma(z)=\mathrm{diag}\left[e_1(z),e_2(z),\dots,e_r(z)\right],
\]
where $e_i(z)=\alpha_{i} z^{k_i},\, \alpha_{i}\in \mathbb{R}_0,\, k_i\in\mathbb{Z},\, i=1,\dots,r$.
\end{enumerate}
Let us define
\[
 A(z) :=  C(z)\Sigma(z)\Lambda^*(z),\quad B(z) :=  \Lambda(z)F(z).
\]
We have that $\Phi(z)=A(z)\tilde{\Delta}(z)B(z)$
is a left-standard factorization of $\Phi(z)$.

{\emph{Step 3.}}
Let  $I(z):=B^{-R}(z)\Theta^{-1}(z)$.
By (\ref{eq:A(z)B(z)}), we have $A^*(z)=N(z)B(z)$ and, therefore,
\begin{align}\label{eq:Mtilde(z)-dt}
&I^*(z)\Phi(z) I(z)= I^*(z) \Phi^*(z) I(z)\nonumber \\
& =  \Theta^{-*}(z)B^{-R*}(z) B^*(z)\tilde{\Delta}^*(z) N(z) B(z) B^{-R}(z)\Theta^{-1}(z)\nonumber\\
& =   \Theta^{-*}(z) \Theta^*(z) \Theta(z) N(z) \Theta^{-1}(z)\nonumber \\
& =  \Theta(z) N(z) \Theta^{-1}(z)=:\Psi(z),
\end{align}
where $N(z)=A^*(z)B^{-R}(z)\in\mathbb{R}[z,z^{-1}]^{r\times r}$ is a L-unimodular matrix. By (\ref{eq:Mtilde(z)-dt}), $\Psi(z)$ is a para-Hermitian matrix positive semi-definite   on the unit circle. Actually a good deal more is true. We notice that $A(z)\tilde{\Delta}(z)B(z)$ and $B^*(z)\tilde{\Delta}(z) A^*(z)$ are two left-standard factorizations of $\Phi(z)$. Hence, by replacing $\Delta_1(z)$ with $\tilde \Delta(z)=\tilde \Delta^*(z)$ in (\ref{eq:M(z)Delta(z)N(z)}), we obtain
\begin{align}\label{eq:Delta(z)N(z)Delta(z)}
\tilde \Delta(z) N(z) \tilde \Delta^{-1}(z)= M(z),
\end{align}
where $M(z)\in\mathbb{R}[z,z^{-1}]$ is L-unimodular. Since $\tilde \Delta(z)=\Theta^*(z) \Theta(z)$ is diagonal and \[\Theta(z):=\mathrm{diag}[\theta_1(z),\dots,\theta_r(z)]\] is canonic, (\ref{eq:Delta(z)N(z)Delta(z)}) implies that $[N(z)]_{ij}$ is divisible by the L-polynomial $[\tilde \Delta(z)]_{jj}/[\tilde \Delta(z)]_{ii}, \ j\geq i$. But 
\begin{align}
[\tilde \Delta(z)]_{ii}&=\theta_i^*(z) \theta_i(z)=\theta_i(1/z) \theta_i(z)=\pm z^{k_{i}} \theta_i^2(z),\nonumber
\end{align} 
where  
 $k_i\in\mathbb{Z}, \ i=1,\dots,r$. Hence, $[N(z)]_{ij}$ must be divisible by the polynomial
\[
f_{ij}^2(z):=\frac{\theta_j^2(z)}{\theta_i^2(z)}, \ \ \ j\geq i,
\]
and, a fortiori, by 
\[
f_{ij}(z)= \frac{\theta_j(z)}{\theta_i(z)}, \ \ \ j\geq i.
\] 
This suffices to establish that $\Psi (z)$ is L-polynomial. Actually, by (\ref{eq:Mtilde(z)-dt}), it follows that $\Psi(z)$ has determinant which is a real nonzero constant. Hence, $\Psi(z)$ is L-unimodular and positive definite on the unit circle.
The problem is now reduced to that of finding a factorization of $\Psi(z)$ of the form
\begin{align}
\Psi(z)=P^*(z) P(z),
\end{align}
where $P(z)\in\mathbb{R}[z]^{r\times r}$ is a unimodular  polynomial matrix.
After this is achieved, the desired factorization for $\Phi(z)$ is obtained as $\Phi(z)=W^*(z) W(z)$ with
\begin{align}\label{eq:H(z)-dt}
W(z):&=  P(z)\Theta(z)B(z)\nonumber \\
&=  P(z)\Theta(z)\Lambda(z)F(z)\nonumber \\
&=  P(z)D_+(z)F(z),
\end{align}
where we have defined $D_+(z):=\Theta(z)\Lambda(z)$.
Indeed, by straightforward algebra,
\begin{align}
W^*(z) W(z) & =  B^*(z) \Theta^*(z) P^*(z)  P(z)\Theta(z)B(z)\nonumber \\
& =  B^*(z) \tilde{\Delta}(z) N(z)B(z)\nonumber \\
& =  B^*(z) \tilde{\Delta}(z) A^*(z)\nonumber \\
& =  \Phi^*(z) = \Phi(z).\nonumber
\end{align} 

\emph{Step 4.} We illustrate an algorithm which provides a factorization of a para-Hermitian L-unimodular polynomial matrix $\Psi(z)=\Psi^*(z)\in\mathbb{R}[z,z^{-1}]^{r\times r}$ positive definite on the unit circle into the product $P^*(z) P(z)$, where $P(z)$ is a unimodular polynomial matrix. 

The algorithm consists of the following two steps.  First of all, we define $\Psi_1(z):=\Psi(z)$ and denote by $h\in\mathbb{N}$ the loop counter of the algorithm, which is initially set to $h:=1$. 

\begin{enumerate}
\item \label{item:thmsf-dt-step4I}
Let $K_i\in\mathbb{Z}, \ i=1,\dots,r$, be the maximum-degree of the $i$-th column of $\Psi_h(z)$ and $k_i\in\mathbb{Z}, \ i=1,\dots,r$, be the minimum-degree of the $i$-th row of $\Psi_h(z)$.
Consider the {highest-column-degree coefficient matrix} of $\Psi_h(z)$, denoted by $\Psi_h^\mathrm{hc}$, and the lowest-row-degree coefficient matrix of $\Psi_h(z)$, denoted by $\Psi_h^\mathrm{lr}$. As noticed in the proof of Lemma \ref{lemma7}, the positive nature of $\Psi_h(z)$ implies that $K_i\geq 0$ for all $i= 1,\dots,r$. Moreover, the para-Hermitianity of $\Psi_h(z)$ implies that $\Psi_h^\mathrm{hc}=(\Psi_h^\mathrm{lr})^\top$
which, in turn, yields $K_i=-k_i$ for all $i=1,\dots,r$.

By Lemma \ref{lemma7}, it follows that $\Psi_h^{\mathrm{hc}}$ is nonsingular if and only if $\Psi_h(z)$ is a constant matrix.
If $\Psi_h(z)$ is a constant matrix, we set $\bar{h}:=h$ and go to step \ref{item:thmsf-dt-step4II}). If this is not the case, we calculate a nonzero vector $\mathbf{v}_h:=[v_1 \ v_2 \ \dots \ v_r]^\top\in\R^r$ such that $\Psi_h^\mathrm{hc}\mathbf{v}_h=\mathbf{0}$. 
Let us define the \emph{active index set}
\[
\mathcal{I}_h:=\{\,i \, :\, v_i\neq 0\,\}
\] 
and the \emph{highest maximum-degree active index set},  $\mathcal{M}_h\subset \mathcal{I}_h$,
\[
\mathcal{M}_h:=\{\,i\in\mathcal{I}_h \, :\, \ K_i\geq K_j, \ \forall\, j\in\mathcal{I}_h\,\}.
\]
We pick an index $p\in\mathcal{M}_h$.
Then, we define the polynomial matrix
\begin{align}
&\qquad \qquad \quad \quad \quad \quad \ \ \ \ {\scriptsize \text{column $p$}}  \nonumber \\
\Omega_h^{-1}(z)&:=\left[\begin{smallarray}{ccccccc}
\ 1 \ & \cdots & 0 & \frac{v_1}{v_p}z^{K_p-K_1} & 0 & \cdots & 0 \\
0 &\ \ddots \ &  & \vdots & &  & 0 \\
\vdots   &  &\ 1 \ & \frac{v_{p-1}}{v_p}z^{K_p-K_{p-1}} & & & \vdots \\
\vdots   &  &  &\ 1\ & & & \vdots \\
\vdots   &  &  & \frac{v_{p+1}}{v_p}z^{K_p-K_{p+1}} &\ 1 \ & & \vdots \\
0   &  &  & \vdots & &\ \ddots \ & 0 \vspace{0.27cm} \\ 
0  & \cdots & 0 & \frac{v_r}{v_p}z^{K_p-K_r} & 0 & \cdots & \ 1 \  \end{smallarray}\right].\nonumber\\
\label{eq:matrix-reduction-dt}
\end{align}
Notice that the entry at $(i,p)$ of $\Omega_h^{-1}(z)$ has the form 
\begin{align}\label{eq:alpha-delta}
\frac{v_i}{v_p}z^{K_p-K_i}=\alpha_iz^{\delta_i} , \ \ \ i=1,\dots,r,
\end{align} 
with $\alpha_i:={v_i}/{v_p}\in\R$ and $\delta_i:=K_p-K_i\geq 0$. In fact, if $K_i>K_p$, then $v_i=0$ and so $\alpha_i=0$. By (\ref{eq:matrix-reduction-dt}), $\det \Omega_h^{-1}(z)=1$ and, therefore, $\Omega_h^{-1}(z)\in\mathbb{R}[z]^{r\times r}$ is a unimodular polynomial matrix. By operating the transformation
\[
\Psi_{h+1}(z):=\Omega_h^{-*}(z) \Psi_h(z) \Omega_h^{-1}(z),
\]
we obtain a new positive definite matrix $\Psi_{h+1}(z)$ with the same determinant of $\Psi_h(z)$. Furthermore, the maximum-degree of the $p$-th column of $\Psi_{h+1}(z)$ is lower than $K_p$, while the maximum-degree of the $i$-th column, $i\neq p$, is not greater than $K_i$.

This fact needs a detailed explanation. If we post-multiply $\Psi_h(z)$ by $\Omega_h^{-1}(z)$, we obtain a matrix of the form
\begin{align}
&\Psi_h'(z):=\Psi_h(z)\Omega_h^{-1}(z)\nonumber\\
&=\left[\begin{array}{c|c|c} [\Psi_h(z)]_{1:r,1:p-1} & \boldsymbol{\psi}_{h}(z) & [\Psi_h(z)]_{1:r,p+1:r} \\
\end{array}\right],\nonumber
\end{align} 
where all the L-polynomials in the $p$-th column vector \begin{align}\label{eq:psi-vector}
\boldsymbol{\psi}_h(z)=[\Psi_h(z)]_{1:r,p:p}+ \sum_{i\neq p} \alpha_iz^{\delta_i} [\Psi_h(z)]_{1:r,i:i}
\end{align}
have maximum-degree lower than $K_p$, since $\Psi_h^\mathrm{hc}\mathbf{v}_h=\mathbf{0}$,
and minimum-degree which satisfies
\begin{align}\label{eq:min-deg-ki}
\mindeg [\boldsymbol{\psi}_h(z)]_i\geq k_i=-K_i, \ \ \ i=1,\dots,r,
\end{align}
since in (\ref{eq:psi-vector}) $\delta_i\geq 0$, for all $i$ such that $\alpha_i\neq 0$. Now, by pre-multiplying $\Psi_h'(z)$ by $\Omega_h^{-*}(z)$, the resulting matrix $\Psi_{h+1}(z)$ can be written in the form
\begin{align}\label{eq:psihc}
&\Psi_{h+1}(z)=\Omega_h^{-*}(z) \Psi_h(z) \Omega_h^{-1}(z)\nonumber\\
&=\left[\begin{smallarray}{c|c|c} [\Psi_h(z)]_{1:p-1,1:p-1} & \boldsymbol{\psi}_{h+1}'(z) & [\Psi_h(z)]_{1:p-1,p+1:r} \\
\hline
\boldsymbol{\psi}'^\top_{h+1}(z^{-1}) & \psi_{h+1}''(z) & \boldsymbol{\psi}'''^\top_{h+1}(z^{-1}) \\ 
\hline
[\Psi_h(z)]_{p+1:r,1:p-1} & \boldsymbol{\psi}_{h+1}'''(z) & [\Psi_h(z)]_{p+1:r,p+1:r}\end{smallarray}\right],\nonumber
\end{align}
where the $p$-th column vector \[\left[\begin{array}{c|c|c}\boldsymbol{\psi}'^\top_{h+1}(z) & \psi_{h+1}''(z) & \boldsymbol{\psi}'''^\top_{h+1}(z) \end{array}\right]^\top\] differs from $\boldsymbol{\psi}_h(z)$ only for the value of the $p$-th entry $\psi_{h+1}''(z)$. Moreover, the maximum-degree of  $\psi_{h+1}''(z)$ cannot increase after the operation is performed, since
\[
\psi_{h+1}''(z)=[\boldsymbol{\psi}_h(z)]_p +\sum_{i\neq p}\alpha_i z^{-\delta_i}[\boldsymbol{\psi}_h(z)]_i,
\] 
and, by (\ref{eq:alpha-delta}), $\delta_i\geq 0$, for all $i$ such that $\alpha_i\neq 0$. We conclude that all the L-polynomials in the $p$-th column of $\Psi_{h+1}(z)$ have maximum-degree lower than $K_p$, while, by (\ref{eq:min-deg-ki}), the maximum-degree of all  the other columns does not increase. We notice also that, since $\Psi_{h+1}(z)=\Psi_{h+1}^*(z)$, all the L-polynomials in the $p$-th row of $\Psi_{h+1}(z)$ have minimum-degree greater than $k_p=-K_p$,  while the minimum-degree of all the other rows does not decrease. Eventually, we update the value of the loop counter $h$ by setting $h:=h+1$ and return to step \ref{item:thmsf-dt-step4I}).

\item \label{item:thmsf-dt-step4II} 
Since $\Psi_{\bar{h}}\in\mathbb{R}^{r\times r}$ is positive definite, we can always factorize it into the product $\Psi_{\bar{h}}=C^\top C$ where $C\in\mathbb{R}^{r\times r}$, by using standard techniques such as the Cholesky decomposition (see, e.g., \cite[Ch.4]{Golub-1996}). Finally, we have constructed a polynomial unimodular matrix
\[
P(z)=C \Omega_{\bar{h}-1}(z)\Omega_{\bar{h}-2}(z)\cdots \Omega_1(z).
\]
such that $\Psi(z)=P^*(z) P(z)$.
\end{enumerate}

% We iterate the procedure until, at the last step $N$, a constant positive definite matrix $\Psi_N (z)$ is found. 
It is worthwhile noticing that the iterative procedure of step \ref{item:thmsf-dt-step4I}) is always brought to an end (after a maximum of $K_1+\dots+K_p$ iterations) since at the $h$-th iteration the maximum-degree of a column of $\Psi_h(z)$ is reduced at least by one, while the maximum-degree of all the other columns does not increase.

To complete the proof of statements \ref{item:thmsf-dt(i)}) and \ref{item:thmsf-dt(ii)}), we notice that, by construction, the rational matrix $W(z)$, as defined in (\ref{eq:H(z)-dt}), and its right inverse are analytic in $\{\,z\in\C\,:\,|z|>1 \,\}$. Moreover, we recall that, if $z=0$ is either a pole, zero or both of $D(z)$, $D_+(z)$ and $D(z)$ have the same zero-pole structure at $z=0$. Now, suppose, by contradiction, that $W(z)$ has a pole at $z=\infty$. Then $W^*(z)$ has a pole at $z=0$. But, since $\Phi(z)=W^*(z)W(z)$, it follows that
\begin{align}\label{eq:Wstar-infty}
W^*(z)&= \Phi(z)W^{-R}(z) \nonumber\\
&=C(z)D(z)F(z)F^{-R}(z)D_+^{-1}(z)P^{-1}(z) \nonumber\\
&=C(z)D(z)D_+^{-1}(z) P^{-1}(z)\nonumber\\
&=C(z)D_-(z)P^{-1}(z),
\end{align}
where $D_{-}(z):= D(z)D_+^{-1}(z)$ has no pole at $z=0$. Since $P^{-1}(z)$ and $C(z)$ are unimodular matrices, in view of (\ref{eq:Wstar-infty}), also $W^*(z)$ has no pole at $z=0$. Hence, the contradiction. We conclude that $W(z)$ has no pole at infinity. Finally, by following a similar argument, it can be verified that also $W^{-R}(z)$ has no pole at infinity.

Now consider statement \ref{item:thmsf-dt(v)}). If $\Phi(z)$ is analytic on the unit circle, then $\Theta(z)$ does not possess any finite pole. This, in turn, implies that $D_+(z)=\Theta(z)\Lambda(z)$ is analytic in $\{\, z\in\overline{\C}\,:\,|z|\geq 1 \}$. Thus, $W(z)$, as defined in (\ref{eq:H(z)-dt}), is also analytic in the same region. 

As for point \ref{item:thmsf-dt(vi)}), the additional assumption that the rank of $\Phi(z)$ is constant on the unit circle implies that $\Theta(z)$ does not possess any finite zero. Thus, $\Theta(z)=I_r$ and, by (\ref{eq:H(z)-dt}),
\[
W^{-R}(z)=F^{-R}(z) \Lambda^{-1}(z) P^{-1}(z)
\]
is analytic in $\{\, z\in\overline{\C}\,:\,|z|\geq 1 \}$. Hence, $W(z)$ and its right inverse $W^{-R}(z)$ are both analytic in $\{\, z\in\overline{\C}\,:\,|z|\geq 1 \}$.

Lastly, consider point \ref{item:thmsf-dt(vii)}). As shown in (\ref{eq:smith-mcmillan-phi}), the Smith-McMillan canonical form of $\Phi(z)$, $D(z)$, is connected to that of $W(z)$, $D_+(z)=\Theta(z)\Lambda(z)$, by
\begin{align}\label{eq:D(z)-plus-dt}
D(z)=\Sigma(z) D_+^*(z)D_+(z),
\end{align}
where $\Sigma(z)\in\mathbb{R}(z)^{r\times r}$ is a diagonal matrix with elements $ \alpha_{i} z^{k_i}$, $\alpha_{i}\in\R_0$, $k_i\in\mathbb{Z}$, on its diagonal. 
%Moreover, we recall that $W(z)$ and $\Phi(z)$ have the same structural indices at $z=0$. 
Let $p_1,\dots,p_h$ be the nonzero finite poles of $\Phi(z)$. By (\ref{eq:D(z)-plus-dt}), it follows that
\begin{equation}\label{MMd=Sod}
\delta(\Phi;p_i)=\begin{cases} \delta(W;p_i) & \text{if } | p_i|<1, \\
2\delta(W;p_i) & \text{if } |p_i|=1,\\
\delta(W;1/p_i) & \text{if } |p_i|>1. \end{cases}
\end{equation}
Moreover, if $p\in\overline{\C}$ is a pole of $\Phi(z)$ of degree $\delta(\Phi;p)$ then also $1/p$ is a pole of $\Phi(z)$ of the same degree and if $p\in\overline{\C}$ is not a pole of $\Phi(z)$ then neither $p$ nor $1/p$ are poles of $W(z)$.
Thus, we have
\begin{align}\label{eq:delta-pii-z}
\sum_{i=1}^h\delta(\Phi;p_i)&=\sum_{\substack{i\, :\, |p_i|<1}} \delta(W;p_i)\, + \sum_{\substack{i\, :\, |p_i|>1}}  \delta(W;1/p_i)\ + \nonumber\\ 
&\hspace{2.85cm} +\sum_{\substack{i\, :\, |p_i|=1}} 2\delta(W;p_i)\nonumber \\
&= 2\sum_{\substack{i\, :\, |p_i|\leq 1}} \delta(W;p_i) 
\end{align} 
By (\ref{eq:mcmillan-degree}), the McMillan degree of a rational matrix equals the sum of the degrees of all its poles, including the pole at infinity. If $\Phi(z)$ has no pole at infinity, then (\ref{eq:delta-pii-z}) directly yields $\delta_M(\Phi)=2\delta_M(W)$. 
%Since $\Phi(z)=\Phi^*(z)$, if $\Phi(z)$ has a pole at $z=\infty$, then $\Phi(z)$ has also a pole at $z=0$ and, furthermore, $\delta(\Phi;\infty)=\delta(\Phi;0)$. 
Otherwise, assume that $\Phi(z)$ has a pole at infinity. Since $W(z)$ and $\Phi(z)$ have the same structural indices at $z=0$ and $W(z)$ has no pole at $z=\infty$, it follows that
\begin{align}\label{eq:delta-zero}
\delta(\Phi;\infty)=\delta(\Phi;0)=\delta(W;0)\ \ \ \ \text{and} \ \ \ \ \delta(W;\infty)=0.
\end{align}
Therefore, by equations (\ref{eq:delta-pii-z}) and (\ref{eq:delta-zero}),
\begin{align}
\delta_M(\Phi)&=\sum_{i=1}^h\delta(\Phi;p_i)+\delta(\Phi;0)+\delta(\Phi;\infty)\nonumber\\
&=2\sum_{\substack{i\, :\, |p_i|\leq 1}} \delta(W;p_i) +2\delta(W;0)=2\delta_M(W),\nonumber
\end{align}
\endproof

We are now ready to prove our main Theorem \ref{thmsf-dt-g}. Many of the ideas for this proof  can be elaborated from those of the proof of Theorem \ref{thmsf-dt}.

{\bf \em  Proof of Theorem \ref{thmsf-dt-g}:} { We first show  how to modify the  constructive procedure used in the proof of Theorem \ref{thmsf-dt} in order to obtain a spectral factor $W(z)$ which satisfies points 1) and 2).
With reference to step 2 in the proof of Theorem \ref{thmsf-dt}, we rearrange the Smith-McMillan form of $\Phi(z)$ as
\begin{align}\label{eq:Dz-dec-thm1}
D(z)=\Sigma(z)  \Lambda(z)\tilde{\Delta}(z) \Lambda(z),
\end{align}
where the only difference with respect to the decomposition in  (\ref{eq:smith-mcmillan-phi}) is that here  $\Lambda(z)\in\mathbb{R}(z)^{r\times r}$ is diagonal, canonic and analytic in $\mathscr{A}_p\setminus\{\infty\}$ with $\Lambda^{-1}(z)$ analytic in $\mathscr{A}_z\setminus\{\infty\}$. Moreover, if $0\not\in \mathscr{A}_p$ and $z=0$ is a pole of $D(z)$, then $\Lambda(z)$ has the same negative structural indices at $z=0$ of $\Phi(z)$, and if $0\not\in \mathscr{A}_z$ and $z=0$ is a zero of $D(z)$, then $\Lambda(z)$ has the same positive structural indices at $z=0$ of $\Phi(z)$. 

Now, to apply the procedure described in the proof of Theorem \ref{thmsf-dt}, it suffices to prove that for any choice of the unmixed-symplectic sets $\mathscr{A}_p$ and $\mathscr{A}_z$, the para-Hermitian matrix $\Psi(z)$, as defined in (\ref{eq:Mtilde(z)-dt}), is still L-unimodular. With reference to the notation introduced in the proof of Theorem \ref{thmsf-dt}, $\Psi(z)$ can be written as
\begin{align}\label{eq:Xi(z)}
\Psi(z)&= \Theta(z) N(z) \Theta^{-1}(z)\nonumber\\
&=\Theta(z)A^*(z)B^{-R}(z) \Theta^{-1}(z)\nonumber\\
&=\Theta(z)\Lambda(z)\Sigma^*(z)C^*(z)F^{-R}(z)\Lambda^{-1}(z) \Theta^{-1}(z)\nonumber\\
&=\Sigma^*(z)D_+(z)\Xi(z)D_+^{-1}(z),
\end{align}
where we have defined $\Xi(z):=C^*(z)F^{-R}(z)\in\R[z,z^{-1}]^{r\times r}$ which is L-unimodular and whose structure does not depend upon the choice of $\mathscr{A}_p$ and $\mathscr{A}_z$. Moreover, in this case, $D_+(z)=\Theta(z)\Lambda(z)$ is diagonal, canonic and analytic in $\mathscr{A}_p\setminus\{\infty\}$ with inverse  analytic in $\mathscr{A}_z\setminus\{\infty\}$. Let us first consider the standard choice $\mathscr{A}_p=\mathscr{A}_z=\{\, z\in\overline{\C}\,:\,|z|>1 \,\}$. In the proof of Theorem \ref{thmsf-dt}, we have shown that $\Psi(z)$ is L-unimodular. Since $D_+(z)$ is diagonal and canonic and $\Sigma^*(z)$ is L-unimodular, by (\ref{eq:Xi(z)}), it follows that $[\Xi(z)]_{ij}\in\R[z,z^{-1}]$ must be divisible  (the concept of divisibility here is the one associated to the ring of L-polynomials) by the polynomial 
\[
p_{ij}(z):=\frac{[D_+(z)]_{jj}}{[D_+(z)]_{ii}}, \ \ \ j\geq i.
\]
On the other hand, let us consider the opposite choice $\mathscr{A}_p=\mathscr{A}_z=\{\, z\in{\C}\,:\,|z|<1 \,\}$. 
By using the right-standard counterpart of Lemma \ref{lemma4} and by following verbatim the argument used in  step 3 of Theorem \ref{thmsf-dt}, it can be proven that $\Psi(z)$ is still L-unimodular. Hence, by (\ref{eq:Xi(z)}),  $[\Xi(s)]_{ij}$ must be also divisible by the L-polynomial $p_{ij}(z^{-1})$, $j\geq i$. Therefore, $[\Xi(z)]_{ij}$ must be divisible by the L-polynomial 
 \[
 q_{ij}(z):= p_{ij}(z) p_{ij}(z^{-1}), \ \ \ j\geq i.
 \]
  Since, for any choice of the unmixed-symplectic sets $\mathscr{A}_p$ and $\mathscr{A}_z$, the factors of $[D_+(z)]_{jj}[D_+(z)]_{ii}^{-1}$, $j\geq i$, are contained in the ones of $q_{ij}(z)$, then $[\Xi(z)]_{ij}$ must be divisible by the polynomial $[D_+(z)]_{jj}[D_+(z)]_{ii}^{-1}$, $j\geq i$, for any choice of $\mathscr{A}_p$ and $\mathscr{A}_z$. We conclude that $\Psi(z)$ must be a L-polynomial matrix for any choice of $\mathscr{A}_p$ and $\mathscr{A}_z$. But, since $\Psi(z)$ is para-Hermitian, $\det \Psi(z)$ is a real constant, hence $\Psi(z)$ is L-unimodular.

To prove point \ref{item:thmsf-dt-g(3)}) we need to show that the McMillan degree of the spectral factor $W(z)$ just obtained equals one half of the McMillan degree of $\Phi(z)$. To this aim, we can follow the same lines of the proof of point \ref{item:thmsf-dt(vii)}) of Theorem \ref{thmsf-dt}.
In fact, we can define $\mathscr{A}_{p,1}:=\mathscr{A}_p\setminus \left(\{\,z\in\C\,:\, |z|=1\,\}\cup\{0,\infty\}\right)$ and partition $\C_0$ as
$$
\C_0=\{\,z\in\C\,:\, 1/z\in\mathscr{A}_{p,1}\,\}\cup \{\,z\in\C\,:\, |z|=1\,\}\cup \mathscr{A}_{p,1}
$$
and replace equation (\ref{MMd=Sod}) with the more general expression for the degree of the pole $p_i$ of $\Phi(z)$
\begin{equation}\label{MMd=Sod-gen}
\delta(\Phi;p_i)=\begin{cases} \delta(W;p_i) & \text{if } 1/p_i\in \mathscr{A}_{p,1}, \\
2\delta(W;p_i) & \text{if } |p_i|=1,\\
\delta(W;1/p_i) & \text{if } p_i\in \mathscr{A}_{p,1}. \end{cases}\nonumber
\end{equation}
The rest of the proof remains the same. 

The proof of point \ref{item:thmsf-dt-g(4)}) is very similar to that of point \ref{item:thmsf-dt(iii)}) of Theorem \ref{thmsf-dt}. The only difference is that the para-unitary matrix function $V(z):=W_1(z)W^{-R}(z)$ and its inverse are not analytic in $\{\, z\in\overline{\C} \,:\, |z|> 1\,\}$ but they are  analytic  in $\mathscr{A}_p$, so that Lemma \ref{lemma2} still applies.

As for point \ref{item:thmsf-dt-g(6)}), we define $V(z):=L(z)W^{-R}(z)$ which is clearly para-unitary and  analytic in  $\mathscr{A}_z$, and the same computation that led to (\ref{w1=vw}), gives $L(z)=V(z)W(z)$.
On the other hand, if $V(z)$ is para-unitary and  analytic in $\mathscr{A}_p$, then it is immediate to check that $L(z):=V(z)W(z)$ is a spectral factor of $\Phi(z)$ and is analytic in  $\mathscr{A}_p$ as well.

The proof of points \ref{item:thmsf-dt-g(7)}) and \ref{item:thmsf-dt-g(8)}) is exactly the same as that of points \ref{item:thmsf-dt(v)}) and \ref{item:thmsf-dt(vi)})
of Theorem \ref{thmsf-dt}.
\endproof}

\subsection{Corollaries}

To conclude this section, we present two straightforward corollaries of Theorem \ref{thmsf-dt}.
The first is a complete parametrization of the set of all spectral factors of a given spectrum.

\begin{corollary}
Let $\Phi(z)$ be a given spectrum and $W(z)$ be any  spectral factor satisfying conditions \ref{item:thmsf-dt(i)}) and \ref{item:thmsf-dt(ii)})  of Theorem \ref{thmsf-dt}.
Let $L(z)\in\R(z)^{m\times n}$, then  $\Phi(z)=L^*(z) L(z)$ if and only if
\[
L(z)=V(z)\left[\begin{array}{c}
I_r  \\
\hline
\mathbf{0}_{m-r, r}
\end{array}\right] W(z),
\]
where $V(z)\in\R(z)^{m\times m}$ is an arbitrary para-unitary matrix and $r=\mathrm{rk}(\Phi)$.
\end{corollary}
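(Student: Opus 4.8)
The plan is to prove the two implications separately: the forward direction is a one-line computation, while the converse reduces to a \emph{para-unitary completion} problem that is then dispatched by the main theorem itself. Throughout I write $E:=[\,I_r\ \ \mathbf{0}_{r,m-r}\,]^\top\in\R^{m\times r}$, so that $E^*=E^\top$ and $E^\top E=I_r$. For the ``if'' part, suppose $L(z)=V(z)E\,W(z)$ with $V(z)$ para-unitary. Since $V^*(z)V(z)=I_m$, one computes directly $L^*(z)L(z)=W^*(z)E^\top V^*(z)V(z)E\,W(z)=W^*(z)(E^\top E)W(z)=W^*(z)W(z)=\Phi(z)$, so $L(z)$ is indeed a spectral factor.

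For the ``only if'' part, assume $\Phi=L^*L$. First I would record that $\mathrm{rk}(L)=r$: on the unit circle $\Phi(e^{j\omega})=\overline{L(e^{j\omega})}^\top L(e^{j\omega})$, whose rank equals that of $L(e^{j\omega})$, so the two normal ranks coincide and in particular $m\ge r$. As $W$ has full row rank $r$, its right inverse $W^{-R}$ exists, and I set $V_0(z):=L(z)W^{-R}(z)\in\R(z)^{m\times r}$. Using $WW^{-R}=I_r$ and $\Phi=W^*W$ gives $V_0^*V_0=W^{-R*}\Phi\,W^{-R}=(WW^{-R})^*(WW^{-R})=I_r$, so $V_0$ is ``para-isometric''. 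Moreover, repeating verbatim the computation that produced (\ref{w1=vw}) with $L$ in the role of $W_1$ — form $L-V_0W=L(I_n-W^{-R}W)$ and note that its para-Hermitian square vanishes because $W(I_n-W^{-R}W)=0$ — yields $L=V_0W$.

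It then remains to realize $V_0$ as the first $r$ columns of a square para-unitary matrix, i.e.\ to exhibit $V_1(z)\in\R(z)^{m\times(m-r)}$ with $V:=[\,V_0\ \ V_1\,]$ para-unitary; for then $VE=V_0$ and $L=V_0W=VE\,W$, as desired. The key observation is that $\Pi:=V_0V_0^*$ is a para-Hermitian idempotent, since $\Pi^*=\Pi$ and $\Pi^2=V_0(V_0^*V_0)V_0^*=\Pi$, and hence so is $\Psi:=I_m-\Pi$. On the unit circle $\Psi(e^{j\omega})$ is the orthogonal projection complementary to a rank-$r$ projection, so $\Psi$ is para-Hermitian and positive semi-definite there, i.e.\ a spectrum; being idempotent, its normal rank equals its (constant) trace $\mathrm{tr}(I_m)-\mathrm{tr}(V_0^*V_0)=m-r$. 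If $m=r$ then $\Psi\equiv 0$, $V_0$ is already square para-unitary, and we are done. If $m>r$, I would apply Theorem \ref{thmsf-dt} (equivalently \ref{thmsf-dt-g}) to the spectrum $\Psi$ to obtain $\tilde W(z)\in\R(z)^{(m-r)\times m}$ with $\Psi=\tilde W^*\tilde W$, and set $V_1:=\tilde W^*$, so that $V_1V_1^*=\tilde W^*\tilde W=\Psi=I_m-V_0V_0^*$. Then $VV^*=V_0V_0^*+V_1V_1^*=I_m$, and since $V$ is square this forces $V^*=V^{-1}$, whence $V^*V=I_m$ as well; thus $V$ is para-unitary and the argument closes.

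I expect the completion step to be the main obstacle. The two facts $\mathrm{rk}(L)=r$ and $L=V_0W$ are routine reuses of machinery already built for Theorem \ref{thmsf-dt}. The substantive points are (i) recognizing the complementary projection $I_m-V_0V_0^*$ as a genuine spectrum and pinning down its normal rank as $m-r$ through the idempotency/trace computation, so that the spectral factorization theorem is applicable, and (ii) observing that \emph{any} factor $V_1$ of this complement completes $V_0$ to a two-sided para-unitary $V$ — the two-sidedness following cleanly from squareness once $VV^*=I_m$ is in hand.
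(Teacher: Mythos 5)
Your proof is correct, and its first half is exactly the paper's argument: both reduce the ``only if'' direction to $L(z)=V_0(z)W(z)$ with $V_0(z):=L(z)W^{-R}(z)$ satisfying $V_0^*(z)V_0(z)=I_r$, via the same vanishing-square computation that produced (\ref{w1=vw}). Where you genuinely diverge is the completion step. The paper's proof at this point simply says to choose $V(z)$ to be ``any para-unitary matrix with $U(z)$ incorporated into its first $r$ columns,'' tacitly asserting that a para-isometric column block can always be completed to a square para-unitary matrix; no construction is given. You actually prove this: you observe that $\Pi(z):=V_0(z)V_0^*(z)$ is a para-Hermitian idempotent, that $\Psi(z):=I_m-\Pi(z)$ is therefore a spectrum in the paper's sense (Hermitian and positive semi-definite on the unit circle, being pointwise a complementary orthogonal projection), pin down its normal rank as $m-r$ by the rank-equals-trace property of idempotents, and then apply Theorem \ref{thmsf-dt} itself to get $\Psi=\tilde{W}^*\tilde{W}$ and set $V_1:=\tilde{W}^*$, so that $V:=[\,V_0\ \ V_1\,]$ satisfies $VV^*=I_m$ and hence, being square with $\det V\not\equiv 0$, also $V^*V=I_m$. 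Your preliminary observation that $\rk(L)=r$ (hence $m\geq r$) on the unit circle is likewise a detail the paper leaves implicit, since the block $\mathbf{0}_{m-r,r}$ in the statement already presupposes it. In short: the paper's route buys brevity by invoking the completion as known; your route is longer but self-contained, and the self-referential use of the spectral factorization theorem to factor the complementary projection is a clean way to close the gap.
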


\begin{proof}
By repeating an argument used in points \ref{item:thmsf-dt(iii)}) and \ref{item:thmsf-dt(iv)}) of Theorem \ref{thmsf-dt}, we have that $L(z)=U(z)W(z)$,  with $U(z)\in\R(z)^{m\times r}$ a rational matrix satisfying $U^*(z)U(z)=I_r$. 
 If we choose $V(z)\in\R(z)^{m\times m}$ to be any para-unitary matrix with $U(z)$ incorporated into its first $r$ columns, i.e.,
\[
U(z)=V(z)\left[\begin{array}{c}
I_r  \\
\hline
\mathbf{0}_{m-r, r}
\end{array}\right],
\]
we conclude.
\end{proof}

The next result characterizes the spectral factors of  L-polynomial spectra.  
\begin{corollary}
Let $\Phi(z)$ be a spectrum and $W(z)$ be   the spectral factor provided in the (constructive) proof of Theorem \ref{thmsf-dt-g}. Assume that $\Phi(z)$ is L-polynomial.
If $\infty\in\mathscr{A}_p$, then $W(z)$ is polynomial in $z^{-1}$ (so that $W^*(z)$ is polynomial in $z$). Otherwise, $0\in\mathscr{A}_p$ and $W(z)$ is polynomial in $z$ (so that $W^*(z)$ is polynomial in $z^{-1}$).
\end{corollary}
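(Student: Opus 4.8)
The plan is to locate all the finite poles of the constructed spectral factor and to show that they can only sit at the origin, after which the analyticity guaranteed by point \ref{item:thmsf-dt-g(2)}) of Theorem \ref{thmsf-dt-g} pins down the result. Recall that the factor produced by the construction has the form $W(z)=P(z)\Theta(z)\Lambda(z)F(z)=P(z)D_+(z)F(z)$ (cf.\ the decomposition (\ref{eq:Dz-dec-thm1}) and (\ref{eq:H(z)-dt})), with $P(z)$ and $F(z)$ unimodular polynomial matrices, $\Theta(z)$ the diagonal canonic factor carrying the unit-circle zero/pole structure, and $\Lambda(z)$ the diagonal canonic factor carrying the off-unit-circle and $\{0,\infty\}$ structure. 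Since $\mathscr{A}_p$ is unmixed-symplectic and $0,\infty$ are swapped by the involution $z\mapsto z^{-1}$, exactly one of $0,\infty$ belongs to $\mathscr{A}_p$; this is precisely the dichotomy in the statement. I would also record the elementary equivalences I intend to exploit: a rational matrix is polynomial in $z^{-1}$ if and only if it is analytic on $\overline{\C}\setminus\{0\}$ (poles allowed only at $0$), and it is polynomial in $z$ if and only if it is analytic on all of $\C$.

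The key step is to show that the only possible finite pole of $W(z)$ is at $z=0$. First I would observe that, since $\Phi(z)$ is L-polynomial, its only poles in $\overline{\C}$ lie at $z=0$ and $z=\infty$; in particular $\Phi(z)$ has no finite pole in $\C_0$. Passing to the Smith-McMillan form, this forces each $\psi_i(z)$ to be a power of $z$, so the diagonal entries of $D(z)$ have no finite pole away from the origin. Consequently $\Theta(z)$, whose zeros and poles are confined to the unit circle and which---because $\Phi(z)$ has no unit-circle pole---has no pole at all, is analytic on $\C$; and, by the explicit construction underlying Lemma \ref{lemma-review}, the finite poles of the diagonal factor $\Lambda(z)$ are exactly the finite poles of $\Phi(z)$ falling in $\mathscr{A}_p^*$, hence can occur only at $z=0$. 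Since $P(z)$, $\Theta(z)$ and $F(z)$ are all analytic on $\C$, multiplying $\Lambda(z)$ by them on either side can neither create nor relocate a finite pole, so the finite poles of $W(z)=P(z)\Theta(z)\Lambda(z)F(z)$ are contained in $\{0\}$.

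With this in hand the two cases close immediately using the analyticity of $W(z)$ on $\mathscr{A}_p$ from point \ref{item:thmsf-dt-g(2)}). If $\infty\in\mathscr{A}_p$, then $W(z)$ is analytic at $\infty$, so it is analytic on $\overline{\C}\setminus\{0\}$ and is therefore polynomial in $z^{-1}$; taking para-conjugates then gives that $W^*(z)$ is polynomial in $z$. If instead $0\in\mathscr{A}_p$, then $W(z)$ is analytic at $0$ as well, so it has no finite pole at all and is thus polynomial in $z$, whence $W^*(z)$ is polynomial in $z^{-1}$. The main obstacle is the middle step---guaranteeing that no finite pole of $W(z)$ escapes to a point $\gamma\in\mathscr{A}_p^*$ with $\gamma\neq 0$. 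A priori the para-Hermitian pairing $\gamma\leftrightarrow\gamma^{-1}$ might seem to allow a pole of $\Lambda(z)$ at such a $\gamma$ to be masked by a compensating zero at $\gamma^{-1}$ inside $\Phi(z)=W^*(z)W(z)$; the resolution is that no such pole is ever introduced, because the finite poles of $\Lambda(z)$ are inherited one-to-one from the finite poles of $\Phi(z)$, and the L-polynomial hypothesis leaves the origin as the only finite pole available.
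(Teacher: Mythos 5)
Your proof is correct and takes essentially the same approach as the paper's: both arguments use the L-polynomial hypothesis to confine the finite poles of $\Phi(z)$ (and hence, through the construction $W(z)=P(z)\Theta(z)\Lambda(z)F(z)$, those of $W(z)$) to $z=0$, and then invoke the analyticity of $W(z)$ on $\mathscr{A}_p$ at $\infty$ (resp.\ at $0$) to conclude that it is polynomial in $z^{-1}$ (resp.\ in $z$). The paper's version is merely terser---it treats only the case $\infty\in\mathscr{A}_p$ and leaves the pole-localization step implicit---whereas you spell out both cases and the Smith--McMillan bookkeeping explicitly.
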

\begin{proof} We consider only the case of $\infty\in\mathscr{A}_p$, the other being similar.
If $\Phi(z)$ is L-polynomial, then the only finite pole it may possess is located at $z=0$. 
Since $W(z)$ does not have the pole at infinity, $W(z)$ must be polynomial in $z^{-1}$. The latter fact, in turn, implies that $W^*(z)$ must be a polynomial matrix.
\end{proof}

\section{A numerical example}\label{sec:numerical-example}
In this section, we will show an application to stochastic realization of the algorithm used in the constructive proof of Theorem \ref{thmsf-dt-g}.
To this aim, let us consider a purely non-deterministic, second order process $\{y(t)\}_{t\in\Z}$ whose spectral density is
\[
\Phi(z) =\begin{bmatrix}
\frac{-2z+6-2z^{-1}}{-2z +5 -2z^{-1}} & z-1 & z-1 \\ z^{-1} -1 & -z +2 -z^{-1} &-z +2 -z^{-1}\\
z^{-1} -1 & -z +2 -z^{-1} &-z +2 -z^{-1}
\end{bmatrix}.
\] 
We want to compute a stochastically minimal, anti-causal realization of $\{y(t)\}_{t\in\Z}$ having all its zeroes in the (closed) unit disk. Since our method has been developed to compute a spectral factorization in the form
$\Phi(z)=W^\top (z^{-1}) W(z)$, this requirement corresponds to the choice $\mathscr{A}_z:=\{\,z\in\C\,:\, |z|<1\,\}$ and $\mathscr{A}_p:=\{\,z\in\C\,:\, |z|>1\,\}$.
Notice that $\Phi(z)$ is non-proper, it features a zero on the unit circle and it is rank deficient, namely $\mathrm{rk}(\Phi)=2$.

We now apply step-by-step the proposed factorization algorithm in order to compute a spectral factor $W(z)\in\mathbb{R}(z)^{2\times 3}$ analytic in $\mathscr{A}_p$ with right inverse analytic in $\mathscr{A}_z$.

{\em Step 1.} The Smith-McMillan canonical form of $\Phi(z)$ is given by
\[
D(z)=\diag\left[ \frac{1}{z(z-2)(z-\frac{1}{2})}, z(z-1)^2\right],
\]
$\Phi(z)$ can be decomposed as 
\[
\Phi(z)=C(z)D(z)F(z),
\] where $C(z)\in\mathbb{R}[z]^{3\times 2}$ and $F(z)\in\mathbb{R}[z]^{2\times 3}$ are unimodular matrices. 

{\em Step 2.} The matrices $\Lambda(z)$, $\Theta(z)$ and $\Sigma(z)$ defined in (\ref{eq:Dz-dec-thm1}) have the form
\begin{align*}
&\Lambda(z)=\diag\left[ \frac{1}{z\left(z-\frac{1}{2}\right)}, 1 \right],  \Theta(z)=\diag\left[1, z-1\right],\\ &\Sigma(z)=\diag\left[-\frac{1}{2 z^2}, -z^2\right].
\end{align*}
Note that $\Lambda(z)$ is analytic in $\mathscr{A}_p\setminus\{\infty\}$ with inverse analytic in $\mathscr{A}_z$. 
Let $A(z) =  C(z)\Sigma(z)\Lambda^*(z)$, $B(z) =  \Lambda(z)F(z)$.

{\em Step 3.} The matrix $\Psi(z)=\Theta(z)^{-1}N(z)\Theta(z)$, with $N(z)=A^*(z)B^{-R}(z)$, is given by
\begin{align*}
&\Psi(z) = \Theta(z)^{-1}N(z)\Theta(z)\\
&=\left[
\begin{smallmatrix}
 -\frac{1}{2}z+\frac{3}{2}-\frac{1}{2}z^{-1} & -\frac{9}{4}z^3+\frac{25}{2}z^2-\frac{43}{2}z+\frac{43}{4}+\frac{1}{2}z^{-1} \\
\frac{1}{2}z+\frac{43}{4}-\frac{43}{2}z^{-1}+\frac{25}{2}z^{-2} -\frac{9}{4}z^{-3}& \psi_{22}(z)
\end{smallmatrix}
\right].
\end{align*}
where $\psi_{22}(z):=\frac{9}{4} z^3+\frac{341}{8} z^2-\frac{1747}{8} z+\frac{2780}{8}-\frac{1747}{8} z^{-1}+\frac{341}{8}z^{-2} +\frac{9}{4}z^{-3}$.
It is worth noting that $\Psi(z)$ is para-Hermitian, L-unimodular and positive definite upon the unit circle.

{\em Step 4.}  Let $\Psi_1(z):=\Psi(z)$. The highest-column-degree coefficient matrix of $\Psi_1(z)$ is 
\[
\Psi_1^{\mathrm{hc}}= \left[
\begin{array}{cc}
 -\frac{1}{2} & -\frac{9}{4} \\
 \frac{1}{2} & \frac{9}{4} \\
\end{array}
\right].
\]
Since $\Psi_1^{\mathrm{hc}}$ is singular, we calculate a nonzero vector $\mathbf{v}_1\in\ker \Psi_1^{\mathrm{hc}}$. One such a vector is given, for instance, by $\mathbf{v}_1 = [9\ -2]^\top$. The highest maximum-degree active index set is $\mathcal{M}_1=\{2\}$, we construct the unimodular matrix $\Omega_1^{-1}(z)$ of the form (\ref{eq:matrix-reduction-dt})
\[
\Omega_1^{-1}(z) = \left[
\begin{array}{cc}
 1 & -\frac{9 }{2}z^2 \\
 0 & 1 \\
\end{array}
\right]
\]
in order to reduce the maximum degree of the second column of $\Psi_1(z)$,
\begin{align*}
&\Psi_2(z) = \Omega_1^{-*}(z)\Psi_1(z)\Omega_1^{-1}(z)\\ &=\left[
\begin{smallmatrix}
 -\frac{1}{2}z +\frac{3}{2}-\frac{1}{2}z^{-1} & \frac{23}{4} z^2-\frac{77}{4} z+\frac{43}{4}+\frac{1}{2}z^{-1} \\
 \frac{1}{2} z +\frac{43}{4}-\frac{77}{4} z^{-1}+\frac{23}{4} z^{-2} & -\frac{23}{4} z^2-\frac{973}{4} z+\frac{2123}{4} -\frac{973}{4} z^{-1}-\frac{23}{4}z^{-2} \\
\end{smallmatrix}\right].
\end{align*}
Since $\Psi_2^{\mathrm{hc}}$ is singular, we repeat the previous step. In this case, we have $\mathbf{v}_2=[23 \ 2]^\top\in\ker \Psi_2^{\mathrm{hc}}$, $\mathcal{M}_2=\{2\}$, and
\[
\Omega_2^{-1}(z) = \left[
\begin{array}{cc}
 1 & \frac{23}{2}z \\
 0 & 1 \\
\end{array}
\right].
\]
Hence, we compute the reduced matrix
\begin{align*}
\Psi_3(z) &= \Omega_2^{-*}(z)\Psi_2(z)\Omega_2^{-1}(z)\\ &=\left[
\begin{array}{cc}
-\frac{1}{2}z+\frac{3}{2}-\frac{1}{2}z^{-1} & -2 z+5+\frac{1}{2}z^{-1} \\
 \frac{1}{2}z+5-2z^{-1} & 2 z+21+ 2z^{-1} \\
\end{array}
\right].
\end{align*}
Actually, $\Psi_3^{\mathrm{hc}}$ is singular. In this case,  $\mathbf{v}_3=[-4 \ 1]^\top\in\ker \Psi_3^{\mathrm{hc}}$, $\mathcal{M}_3=\{2\}$,
\[
\Omega_3^{-1}(z)=\left[
\begin{array}{cc}
 1 & -4 \\
 0 & 1 \\
\end{array}
\right]
\]
and we obtain
\begin{align*}
\Psi_4(z) &= \Omega_3^{-*}(z)\Psi_3(z)\Omega_3^{-1}(z)\\
&=\left[
\begin{array}{cc}
 -\frac{1}{2}z+\frac{3}{2}-\frac{1}{2}z^{-1} & -1+\frac{5}{2 }z^{-1} \\
 \frac{5}{2}z-1 & 5 \\
\end{array}
\right].
\end{align*}
Yet another iteration is required; indeed $\Psi_4^{\mathrm{hc}}$ is singular. Thus we proceed by computing $\mathbf{v}_4=[-2 \ 1]^\top\in\ker \Psi_4^{\mathrm{hc}}$, $\mathcal{M}_3=\{1\}$,
\[
\Omega_4^{-1}(z)=\left[\begin{array}{cc}
 1 & 0 \\
 -\frac{1}{2}z & 1 \\
\end{array}
\right]
\]
and eventually we arrive at
\begin{align*}
\Psi_5 = \Omega_4^{-*}(z)\Psi_4(z)\Omega_4^{-1}(z)=\left[\begin{array}{cc}
 \frac{1}{4} & -1 \\
 -1 & 5 \\
\end{array}
\right].
\end{align*}
The latter matrix is constant and positive definite; therefore it admits a Cholesky factorization 
\[
\Psi_5=C^\top C,\quad C=\left[
\begin{array}{cc}
 \frac{1}{2} & -2 \\
 0 & 1 \\
\end{array}
\right].
\]
The fourth step of the algorithm is concluded, since we found a factorization $\Psi(z)=P^*(z)P(z)$, with $P(z)$ unimodular of the form
\begin{align*}
P(z)&=C\Omega_4(z)\Omega_3(z)\Omega_2(z)\Omega_1(z)\\
&=\left[
\begin{array}{cc}
 -z+\frac{1}{2} & -\frac{1}{4} z \left(18 z^2-55 z+39\right) \\
 \frac{1}{2}z & \frac{1}{4} \left(9 z^3-23 z^2+8 z+4\right) \\
\end{array}
\right].
\end{align*}

Finally, we have that 
\[
W(z)=P(z)\Theta(z)B(z)=\left[
\begin{array}{ccc}
 -\frac{1}{z} & \frac{1}{z}-1 & \frac{1}{z}-1 \\
 \frac{1}{2 z-1} & 0 & 0 \\
\end{array}
\right].
\]
is a stochastically minimal spectral factor of $\Phi(z)$ analytic in $\mathscr{A}_p$ with right inverse analytic in $\mathscr{A}_z$.

Therefore the sought for realization is
$$
y(t)=W^\top(z^{-1}) e(t)
$$
with $e(t)$ being white noise.

\section{Concluding remarks and future directions}\label{sec:conclusions}

In this paper we have established a general result on spectral factorization for an arbitrary discrete-time spectrum.
This result opens the way for many applications and generalizations of known results in several fields of systems theory such as estimation and stochastic realization.
In particular, for these applications it will be important to further investigate the links between arbitrary spectral factors and stochastic minimality.
A conjecture in this direction, which is currently under investigation, is the following.
\begin{conjecture}
Let $\Phi(z)\in\R(z)^{n\times n}$ be a spectrum of normal rank $\mathrm{rk}(\Phi)=r\neq 0$.
Let $\mathscr{A}_p$ and $\mathscr{A}_z$ be two unmixed-symplectic sets.
Let  $W(z)$ be a spectral factor satisfying points \ref{item:thmsf-dt-g(1)}), \ref{item:thmsf-dt-g(2)}) and \ref{item:thmsf-dt-g(3)}) of Theorem \ref{thmsf-dt-g}.
Then $W(z)$
is unique up to a constant, orthogonal matrix multiplier on the left, i.e., if $W_1(z)$ also satisfies points \ref{item:thmsf-dt-g(1)}), \ref{item:thmsf-dt-g(2)}) and \ref{item:thmsf-dt-g(3)}), then  $W_1(z)=TW(z)$ where $T\in\R^{r\times r}$ is orthogonal. 
\end{conjecture}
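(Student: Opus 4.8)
The plan is to follow the uniqueness argument of point \ref{item:thmsf-dt-g(4)}), but to replace the single invocation of Lemma \ref{lemma2} — which is unavailable when $\mathscr{A}_p\neq\mathscr{A}_z$ — by a degree count that activates hypothesis \ref{item:thmsf-dt-g(3)}). Let $W(z)$ and $W_1(z)$ both satisfy \ref{item:thmsf-dt-g(1)}), \ref{item:thmsf-dt-g(2)}) and \ref{item:thmsf-dt-g(3)}). Exactly as in the derivation of (\ref{eq-unicita}), I set $V(z):=W_1(z)W^{-R}(z)$; from $W^*W=W_1^*W_1$ one gets $V^*V=I_r$, so $V$ is para-unitary, and the computation leading to (\ref{w1=vw}) yields $W_1=VW$ together with $V^{-1}=WW_1^{-R}$. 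The statement thus reduces to showing that $V(z)$ is a constant orthogonal matrix.

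First I would localize the poles and zeroes of $V$. Since $W,W_1$ are analytic in $\mathscr{A}_p$ while $W^{-R},W_1^{-R}$ are analytic in $\mathscr{A}_z$, both $V$ and $V^{-1}$ are analytic in $\mathscr{A}_p\cap\mathscr{A}_z$; moreover $V^*(e^{j\omega})V(e^{j\omega})=I_r$ forces $|[V(e^{j\omega})]_{ik}|\leq 1$, so, as in Lemma \ref{lemma2}, $V$ has neither poles nor zeroes on the unit circle. Hence all poles and zeroes of $V$ lie in $\overline{\C}\setminus(\mathscr{A}_p\cap\mathscr{A}_z)$, off the unit circle. Using the para-unitary symmetry (a pole of $V$ at $\beta$ matches a zero at $\beta^{-1}$) together with $\mathscr{A}_p\cap\mathscr{A}_p^*=\mathscr{A}_z\cap\mathscr{A}_z^*=\emptyset$, one checks that $V$ is a local unit on every symmetric pair $\{\beta,\beta^{-1}\}$ on which $\mathscr{A}_p$ and $\mathscr{A}_z$ make the \emph{same} choice. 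The only admissible singularities of $V$ therefore sit on the ``disagreement'' pairs, namely those $\{\beta,\beta^{-1}\}$ with $\beta\in\mathscr{A}_p\cap\mathscr{A}_z^*$ and $\beta^{-1}\in\mathscr{A}_p^*\cap\mathscr{A}_z$.

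The heart of the argument is to exclude even these. The key fact — essentially the content of the proof of point \ref{item:thmsf-dt(vii)}) — is that minimality pins down the pole structure of any admissible factor completely: for $\beta\in\mathscr{A}_p^*$ the matrix $W^*$ is analytic at $\beta$, whence $\delta(\Phi;\beta)\leq\delta(W;\beta)$, and summing such inequalities over a full set of representatives, with the even-multiplicity splitting of Lemma \ref{lemma5} on the circle, gives $\delta_M(W)\geq\tfrac12\delta_M(\Phi)$ with equality \emph{only} if $\delta(W;\beta)=\delta(\Phi;\beta)$ for all $\beta\in\mathscr{A}_p^*$ and $\delta(W;\beta)=0$ for $\beta\in\mathscr{A}_p$. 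Thus $W$ and $W_1$ carry identical pole degrees at every point. Fixing a disagreement pair and its point $\gamma\in\mathscr{A}_z$, both $W$ and $W_1$ are zero-free there (their right inverses are analytic in $\mathscr{A}_z$), so the local McMillan degree equals $-\mu_\gamma$, where $\mu_\gamma:=\min_{\mathbf i}v_\gamma(\cdot)$ over the $r\times r$ minors equals $\sum_k\nu^{(k)}_\gamma$ in the notation of Lemma \ref{lemma5-pre}. Applying Cauchy--Binet to the $r\times r$ minors of $W_1=VW$ (with $V$ square) gives $\mu_\gamma(W_1)=\mu_\gamma(W)+v_\gamma(\det V)$, and $\delta(W_1;\gamma)=\delta(W;\gamma)$ forces $v_\gamma(\det V)=0$. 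Since $\det V$ is a scalar all-pass function and this holds at every disagreement point (its reflected points contribute $-v_\gamma(\det V)=0$, the agreement pairs and the unit circle nothing), $\det V$ has no poles nor zeroes and is constant; being lossless, $V$ then has McMillan degree $\deg\det V=0$, hence is constant and orthogonal, and $W_1=TW$ with $T$ orthogonal.

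The main obstacle I anticipate is making the structural-index bookkeeping of the previous paragraph rigorous in full generality. Three points require care: (i) justifying the identities $\delta(\,\cdot\,;\gamma)=-\mu_\gamma$ and $\mu_\gamma(W_1)=\mu_\gamma(W)+v_\gamma(\det V)$ for rank-deficient $r\times n$ factors, which demands working consistently with the $r\times r$ minors and Smith--McMillan indices; (ii) the behaviour on the unit circle and at $z\in\{0,\infty\}$, where $W,W_1$ may carry poles \emph{and} zeroes so the clean ``$\delta=-\mu$'' dichotomy fails and one must fall back on the direct boundedness argument for $\det V$; and (iii) the lossless identity $\delta_M(V)=\deg\det V$ for real para-unitary matrices, which ought to follow from a Blaschke--Potapov-type factorization but must be checked in the discrete-time, para-unitary (rather than inner) setting. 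I expect (i) to be the genuinely delicate step, since it is exactly the interaction, on the disagreement pairs, between a pole of $W$ and the reflected zero of $W^*$ inside $\Phi=W^*W$ that makes the individual structural indices — as opposed to their pole parts — hard to control.
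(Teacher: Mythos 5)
First, note that the paper itself offers no proof to compare against: the statement is Conjecture~1, explicitly described as ``currently under investigation,'' so your argument has to stand entirely on its own. Much of it does. The reduction to $W_1(z)=V(z)W(z)$ with $V(z):=W_1(z)W^{-R}(z)$ para-unitary is sound; the localization of the poles and zeroes of $V(z)$ to the disagreement pairs $\{\gamma,\gamma^{-1}\}$ with $\gamma\in\mathscr{A}_p^*\cap\mathscr{A}_z$ is correct (via $V(z)=[V^{-1}(z^{-1})]^\top$ and boundedness on the circle); the minimality count does force $\delta(W;\beta)=\delta(\Phi;\beta)$ for $\beta\in\mathscr{A}_p^*$ and $\delta(W;\beta)=\tfrac12\delta(\Phi;\beta)$ on the circle, by subadditivity of the local degree under products and the pairing $\delta(\Phi;\beta)=\delta(\Phi;\beta^{-1})$; and Cauchy--Binet for the full-size minors indeed gives $\mu_\gamma(W_1)=\mu_\gamma(W)+v_\gamma(\det V)$, whence $v_\gamma(\det V)=0$ at every disagreement point and so $\det V$ is constant.

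The fatal gap is exactly your item (iii), and it is not a checkable technicality but a false statement: for a real para-unitary matrix with poles on \emph{both} sides of the unit circle, the identity $\delta_M(V)=\deg\det V$ fails. Take $b(z)=(1-az)/(z-a)$ with $0<|a|<1$, so $b^*(z)b(z)=1$, and set $V(z)=\diag\left[b(z),\,b(z)^{-1}\right]$. Then $V^*(z)V(z)=I_2$ and $\det V(z)\equiv 1$, yet $\delta_M(V)=2$ and $V$ is non-constant. Worse, this $V$ threads every needle your bookkeeping sets up: its poles and zeroes all sit at $\{a,1/a\}$, i.e.\ precisely on a disagreement pair when $a\in\mathscr{A}_p^*\cap\mathscr{A}_z$; its structural indices at each of these points are $(-1,+1)$, so $v_\gamma(\det V)=0$ and $\mu_\gamma(W_1)=\mu_\gamma(W)$, fully consistent with equal pole degrees and with zero-freeness of both factors at $\gamma$ (which is anyway a hypothesis on $W_1$, not something you verify). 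The Blaschke--Potapov degree identity you appeal to holds only for \emph{one-sided} (inner) lossless matrices, where a pole at $\gamma$ in one entry cannot be cancelled in the determinant by a zero at $\gamma$ in another; on disagreement pairs the whole point is that $V$ is allowed to be two-sided, and such balanced cancellation is exactly what can occur. Since the determinant sees only the sum $\sum_k\nu_\gamma^{(k)}$ of the structural indices, your argument cannot distinguish $V=I_r$ from such balanced all-pass factors; excluding them would require controlling the individual indices of $W_1=VW$ against those of $W$ through the lower-order ($\ell<r$) minors, where Cauchy--Binet valuations do not add --- and that unresolved interaction is precisely why the statement remains a conjecture rather than a theorem.
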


This conjecture would be a first step towards a complete parametrization of the set of all stochastically minimal right invertible spectral factors. We believe that this set can be parametrized very efficiently in terms of the all-pass divisors of a generalized {\em phase function} $T_0(z)$:\footnote{Notice that the definition of phase function employed in the conjecture is dual with respect to the classical definition used in stochastic realization,
\cite{Lindquist-P-85-siam,Lindquist-P-91-jmsec}.}
 
\begin{conjecture}
Let $\Phi(z)\in\R(z)^{n\times n}$ be a spectrum of normal rank $\mathrm{rk}(\Phi)=r\neq 0$.
Let $W_-(z)$ be the spectral factor corresponding to Theorem  \ref{thmsf-dt} and $\overline{W}_+(z)$ be the spectral factor corresponding to $\mathscr{A}_p=\mathscr{A}_z:=\{\, z\in\C\,:\, |z|<1\,\}$.
Let $T_0$ be the all-pass function defined by $T_0(z):=\overline{W}_+(z)W_-^{-R}(z)$.
Then, the set of all minimal right invertible spectral factors of $\Phi(z)$ is given by
\beann
&&\big\{\,W(z)=T_1(z) W_-(z)\,:\,\\
&&\hspace{1.5cm} T_1^\ast (z)T_1(z) =T_1 (z)T_1^\ast (z) =I_r,\\  
&&\hspace{1.5cm}  \delta_M(T_1(z))+\delta_M(T_0(z)T_1^\ast (z))=\delta_M(T_0(z))\,\big\}.
\eeann
\end{conjecture}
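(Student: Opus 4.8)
The plan is to reduce the conjecture to a degree count for para-unitary (all-pass) matrices and then to settle that count by a pole-by-pole analysis split across the unit circle. First I would record a region-free parametrization of spectral factors: the purely algebraic computation leading to (\ref{w1=vw}) in the proof of point \ref{item:thmsf-dt(iii)}) of Theorem \ref{thmsf-dt} shows that any two full-row-rank spectral factors $W,W_-$ of $\Phi$ satisfy $W=T_1W_-$ with $T_1:=WW_-^{-R}$ para-unitary, and conversely any such product is a full-row-rank spectral factor (no analyticity region is used here). Applying this to $\overline{W}_+$ gives $\overline{W}_+=T_0W_-$, so $T_0$ is genuinely the all-pass multiplier carrying $W_-$ to $\overline{W}_+$; setting $G:=T_0T_1^*$ we also obtain $T_0=GT_1$ and $W=G^*\overline{W}_+$. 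Since $W_-$ is stochastically minimal, the conjecture becomes: for para-unitary $T_1$, one has $\delta_M(T_1W_-)=\delta_M(W_-)$ if and only if $\delta_M(T_1)+\delta_M(G)=\delta_M(T_0)$.

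Next I would compute $\delta_M(W)$ directly. Because $W_-$ and $W_-^{-R}$ are both analytic in $\{z:|z|>1\}$, $W_-$ has neither poles nor zeros there, so comparing pole degrees in $W=T_1W_-$ and $T_1=WW_-^{-R}$ point by point shows that $W$ and $T_1$ have exactly the same poles outside the unit disk, with no cancellation. Dually, $\overline{W}_+$ and $\overline{W}_+^{-R}$ are analytic in $\{z:|z|<1\}$, so from $W=G^*\overline{W}_+$ the poles of $W$ inside the disk coincide with those of $G^*$, that is, by para-conjugation, with the poles of $G$ outside the disk. The possible unit-circle structure of $\Phi$ is common to every spectral factor and cancels identically in $T_0=\overline{W}_+W_-^{-R}$, so $T_0$, hence the admissible $T_1$ and $G$, carry no unit-circle poles, and the boundary contribution is the same for $W$ and $W_-$ and may be set aside. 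Writing $\delta_{\mathrm e}(\cdot)$ for the degree of the poles in $\{|z|>1\}$, these two facts give $\delta_M(W)=\delta_{\mathrm e}(T_1)+\delta_{\mathrm e}(G)$, while $\delta_{\mathrm e}(T_0)=\delta_M(\overline{W}_+)=\delta_M(W_-)$. Sub-additivity of pole degree in the product $T_0=GT_1$ then yields $\delta_M(W)=\delta_M(W_-)+\kappa$, where $\kappa\ge 0$ is the degree of the pole cancellation occurring among the outer poles of $G$ and $T_1$. Hence $W$ is minimal precisely when $\kappa=0$.

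It remains to identify $\kappa=0$ with the divisor condition. Put $\Delta:=\delta_M(T_1)+\delta_M(G)-\delta_M(T_0)\ge 0$; by sub-additivity of the McMillan degree the stated condition is exactly $\Delta=0$, i.e. the product $T_0=GT_1$ exhibits no pole cancellation whatsoever, which is the precise meaning of $T_1$ being a right all-pass divisor of $T_0$. Trivially $\Delta=0$ forces $\kappa=0$. For the converse I would use para-unitarity: because $G^*=G^{-1}$ and $T_1^*=T_1^{-1}$, the poles and zeros of each factor are reciprocal, so any cancellation of $GT_1$ at a point $\alpha$ inside the disk is mirrored by a cancellation at $1/\alpha$ outside the disk of the same degree, and conversely; with no unit-circle cancellation present this gives $\Delta=2\kappa$. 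Consequently $\kappa=0$ if and only if $\Delta=0$, and together with the identity $\delta_M(W)=\delta_M(W_-)+\kappa$ this establishes both inclusions, namely that $W=T_1W_-$ is minimal exactly when $\delta_M(T_1)+\delta_M(T_0T_1^*)=\delta_M(T_0)$.

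I expect the reciprocity $\kappa_{\mathrm{in}}=\kappa_{\mathrm{out}}$ underlying $\Delta=2\kappa$ to be the main obstacle. In contrast with the scalar (Blaschke) case, the poles and zeros of a para-unitary matrix need not be simple, the direction in which a pole of one factor is annihilated by a zero of the other matters, and the pole and zero sets of $W_-$ may intersect, so the cancellation degrees must be tracked through Smith--McMillan structural indices rather than bare multiplicities. I would control this with the localized structural-index calculus already employed in Lemma \ref{lemma5-pre} and in the McMillan-degree computation of point \ref{item:thmsf-dt(vii)}) of Theorem \ref{thmsf-dt}, matching the indices of $G$ and $T_1$ at each reciprocal pair $\{\alpha,1/\alpha\}$. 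The remaining routine-but-delicate point is the systematic treatment of the zeros and poles of $\Phi$ on the unit circle and at $0,\infty$, where the factor $\Sigma(z)$ and the even-multiplicity phenomenon of Lemma \ref{lemma5} intervene; these can be absorbed exactly as in the degree bookkeeping of Theorem \ref{thmsf-dt}.
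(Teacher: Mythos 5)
First, a point of order: the statement you were asked to prove is \emph{Conjecture 2} of the paper, which the authors explicitly leave open (``currently under investigation''), so there is no in-paper proof to compare against; your proposal can only be judged on its own merits. On those merits, the reduction you set up is sound. The computation (\ref{eq-unicita})--(\ref{w1=vw}) is indeed region-free, so every right-invertible spectral factor (necessarily $r\times n$, since $\operatorname{rk}(W)=\operatorname{rk}(\Phi)=r$) has the form $W=T_1W_-$ with $T_1=WW_-^{-R}$ para-unitary; the pointwise degree matching $\delta(W;p)=\delta(T_1;p)$ for $|p|>1$ (including $\infty$) is correct because $W_-$ \emph{and} $W_-^{-R}$ are analytic there, and dually inside the disk via $\overline{W}_+$; and para-unitary matrices have no poles on the unit circle by the boundedness argument in the proof of Lemma \ref{lemma2}, which also gives $\delta(W;p)=\delta(W_-;p)$ for $|p|=1$. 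One slip to repair: your equation $\delta_{\mathrm e}(T_0)=\delta_M(\overline{W}_+)=\delta_M(W_-)$ fails when $\Phi$ has unit-circle poles; the correct statement is $\delta_{\mathrm e}(T_0)=\delta_M(W_-)-\delta_{\mathrm{circle}}(W_-)$, where $\delta_{\mathrm{circle}}$ collects the (common) boundary pole degrees. Since you carry that common term on both sides, the identity $\delta_M(W)=\delta_M(W_-)+\kappa$ survives intact, but the bookkeeping should be stated this way.

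The genuine gap is the one you yourself flag: the reciprocity $\Delta=2\kappa$ is asserted (``any cancellation at $\alpha$ inside is mirrored at $1/\alpha$'') but not proved, and the heuristic ``poles and zeros of each factor are reciprocal'' does not by itself control directions and non-simple structure. Fortunately the lemma is true and admits a short proof that bypasses all the directional issues you worry about. For para-unitary $T$, $T^*=T^{-1}$ forces the structural indices of $T$ at $1/\alpha$ to be the negatives of those at $\alpha$ (the substitution $z\mapsto 1/z$ maps indices at $1/\alpha$ to indices at $\alpha$, and inversion negates them); hence $\delta(T;1/\alpha)=\zeta(T;\alpha)$, where $\zeta$ denotes the zero degree (sum of positive structural indices). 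Set $c(\alpha):=\delta(G;\alpha)+\delta(T_1;\alpha)-\delta(T_0;\alpha)\geq 0$, the local cancellation in $T_0=GT_1$ (nonnegativity is local subadditivity of pole degree, via Binet--Cauchy on minors). Then
\begin{align}
c(\alpha)-c(1/\alpha)&=\bigl[\delta-\zeta\bigr](G;\alpha)+\bigl[\delta-\zeta\bigr](T_1;\alpha)-\bigl[\delta-\zeta\bigr](T_0;\alpha)\nonumber\\
&=-v_\alpha(\det G)-v_\alpha(\det T_1)+v_\alpha(\det T_0)=0,\nonumber
\end{align}
because $\delta(T;\alpha)-\zeta(T;\alpha)=-v_\alpha(\det T)$ and $\det T_0=\det G\cdot\det T_1$. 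The pair $\{0,\infty\}$ is handled identically with $v_\infty$, and $c\equiv 0$ on the circle since none of the three matrices has boundary poles. Summing, $\Delta=\sum_\alpha c(\alpha)=2\kappa$, exactly your claim, with no case analysis on multiplicities, directions, or overlapping pole/zero sets of $W_-$.

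In summary: your architecture (region-free parametrization, split of $\delta_M(W)$ into inner/boundary/outer contributions read off from $T_1$ and $G=T_0T_1^*$, and reduction of minimality to $\kappa=0$) is correct, and the single deferred step closes via the determinant-valuation identity above rather than the heavier structural-index matching you anticipated. With that lemma inserted and the boundary bookkeeping corrected as indicated, your proposal is a complete proof of the conjecture --- that is, it settles a statement the paper leaves open, which is more than a routine verification.
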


\end{document}